\numberwithin{equation}{section}
\newcommand{\ip}[1] {\langle #1 \rangle }
\newcommand{\inclu}[0] {\ar@{^{(}->}}
\newcommand{\spann}{\text{span}}
\newcommand{\gph}{{\rm gph}\,}
\newcommand{\diag}{{\rm diag}}
\newcommand{\dist}{{\rm dist}}
\newcommand{\R}{\mathbb{R}}
\newcommand{\EE}{\mathbb{E}}
\newcommand{\trace}{\mathrm{Tr}}
\newcommand{\sign}{\mathrm{sign}}
\newcommand{\lip}{\mathrm{lip}}
\newcommand{\RR}{\mathbb{R}}
\newcommand{\pfail}{p_{\mathrm{fail}}}
\newcommand{\argmin}{\operatornamewithlimits{argmin}}
\newcommand{\ls}{\operatornamewithlimits{limsup}}
\newcommand{\Diag}{{\rm Diag}}
\newtheorem{thm}{Theorem}[section]
\newtheorem{definition}[thm]{Definition}
\newtheorem{proposition}[thm]{Proposition}
\newtheorem{lem}[thm]{Lemma}
\newtheorem{cor}[thm]{Corollary}
\newtheorem{assumption}{Assumption}
\theoremstyle{remark}
\newtheorem{claim}{Claim}
\DeclarePairedDelimiter{\dotp}{\langle}{\rangle}
\begin{document}
	
	\title{The nonsmooth landscape of  phase retrieval}
	%\subtitle{Do you have a subtitle?\\ If so, write it here}
	
	%\titlerunning{Short form of title}        % if too long for running head
	
	\author{Damek Davis\thanks{School of Operations Research and Information Engineering, Cornell University,
Ithaca, NY 14850, USA;
\texttt{people.orie.cornell.edu/dsd95/}.}
		\and 
		Dmitriy Drusvyatskiy\thanks{Department of Mathematics, U. Washington, 
		Seattle, WA 98195; \texttt{www.math.washington.edu/{\raise.17ex\hbox{$\scriptstyle\sim$}}ddrusv}. Research of Drusvyatskiy was supported by the AFOSR YIP award FA9550-15-1-0237 and by the NSF DMS   1651851 and CCF 1740551 awards.}
		\and Courtney Paquette\thanks{Industrial and Systems Engineering Department, Lehigh University, Bethlehem, PA 18015; \texttt{sites.math.washington.edu/{\raise.17ex\hbox{$\scriptstyle\sim$}}yumiko88/}.}
	}

	\date{}
	\maketitle
	\begin{abstract}
We consider a popular nonsmooth formulation of the real phase retrieval problem. We show that under standard statistical assumptions, a simple subgradient method converges linearly when initialized within a constant relative distance of an optimal solution. Seeking to understand the distribution of the stationary points of the problem, we complete the paper by proving that as the number of Gaussian measurements increases, the stationary points converge to a codimension two set, at a controlled rate.  Experiments on image recovery problems illustrate the developed algorithm and theory.
\end{abstract}

\bigskip

{\bf Keywords:} Phase retrieval, stationary points, subdifferential, variational principle, subgradient method, spectral functions, eigenvalues

	\section{Introduction}		
	Phase retrieval is a common task in computational science, with numerous applications including imaging, X-ray crystallography, and speech processing. In this work, we consider a popular real counterpart of the problem. Given a set of tuples $\{(a_i,b_i)\}_{i=1}^m\subset\R^d\times \R$, the (real) phase retrieval problem seeks to determine a vector $x\in \R^d$ satisfying $(a^Tx)^2=b_i$ for each index $i=1,\ldots,m$. Due to its combinatorial nature, this problem is known to be NP-hard \cite{phae_NPhard}. One can model the real phase retrieval problem in a variety of ways.
Here, we consider the following ``robust formulation'':
		$$\min_x~ f_{S}(x):=\frac{1}{m}\sum_{i=1}^m |(a_i^Tx)^2-b_i|.$$

This  model of the problem has gained some attention recently with the work of Duchi-Ruan \cite{duchi_ruan_PR} and Eldar-Mendelson \cite{eM}. Indeed, this model exhibits a number of desirable properties, making it amenable to numerical methods. Namely, in contrast to other possible formulations, mild statistical assumptions imply that $f_S$ is both {\em weakly convex} \cite[Corollary 3.2]{duchi_ruan_PR} and {\em sharp} \cite[Theorem 2.4]{eM}, with high probability. That is, there exist numerical constants $\rho,\kappa>0$ such that 	
$$\textrm{the assignment }x\mapsto f_S(x)+\frac{\eta}{2}\|x\|^2 \textrm{ is a convex function},$$ and the inequality 
$$f_S(x)-\inf f_S\geq \kappa\|x-\bar x\|\|x+\bar x\|\qquad \textrm{ holds for all }x\in \R^d.$$
Here, $\pm \bar x$ are the true signals and $\|\cdot\|$ denotes the $\ell_2$-norm. Weak convexity is a well studied concept in optimization literature \cite{prox_reg,federer,clarkelower,favorablelowerc2}, while sharpness and the closely related notion of error bounds \cite{weak_sharp,prox_error,Luo1993} classically underly rapid local convergence guarantees in nonlinear programming.
 Building on these observations, Duchi and Ruan~\cite{duchi_ruan_PR} showed that with proper initialization, the so-called  \emph{prox-linear algorithm} \cite{comp_DP,prox_error,prox,duchi_ruan,duchi_ruan_PR}  quadratically converges to $\pm \bar x$ (even in presence of outliers). The only limitation of their approach is that the prox-linear method requires, at every iteration, invoking an iterative solver for a convex subproblem. For large-scale instances $(m \gg 1, d \gg 1$), the numerical resolution of such problems is non-trivial. In the current work, we analyze a lower-cost alternative when there are no errors in the measurements.

%* What are the limitations of the present approaches?

%* What is new about our approach?
We will show that the robust phase retrieval objective favorably lends itself to classical subgradient methods. This is somewhat surprising because, until recently, convergence rates of subgradient methods in nonsmooth, nonconvex optimization have remained elusive; see the discussion in \cite{prixm_guide_subgrad}. We will prove that under mild statistical assumptions and proper initialization, the standard Polyak subgradient method 
$$x_{k+1}=x_k-\left(\frac{f_S(x_k)-\min f_S}{\|g_k\|^2}\right)g_k\qquad \textrm{with}\qquad g_k\in \partial f_S(x_k),$$
linearly converges to $\pm \bar x$, with high probability. We note that 
high quality initialization, in turn, is straightforward to obtain; see e.g. \cite[Section 3.3]{duchi_ruan_PR} and \cite{eldar_init}.
The argument we present is appealingly simple, relying only on weak convexity and sharpness of the function.

Aside from the current work and that of \cite{duchi_ruan_PR}, we are not aware of  other attempts to optimize the robust phase retrieval objective directly. Other works focus on different problem formulations. Notably,  Cand\`{e}s et al. \cite{wirt_flow} and Sun et al. \cite{phase_nonconv} optimize the smooth loss $\frac{1}{m}\sum_{i=1}^m (\dotp{a_i, x}^2 - b_i)^2$ using a second-order trust region method and a gradient method, respectively. Wang et al. \cite{eldar_init} instead minimize the highly nonsmooth function $\frac{1}{m}\sum_{i=1}^m (|\dotp{a_i, x}| -\sqrt{b_i})^2$ by a gradient descent-like method. Another closely related recent work is that of Tan and Vershynin~\cite{versh_kac}. One can interpret their scheme as  a stochastic subgradient method on the formulation $\frac{1}{m}\sum_{i=1}^m ||\dotp{a_i, x}|- \sqrt{b_i}|$, though this is not explicitly stated in the paper. Under proper initialization and assuming that $a_i$ are uniformly sampled from a sphere, they prove linear convergence. Their argument relies on sophisticated probabilistic tools. In contrast, we disentangle the probabilistic statements (weak convexity and sharpness) from the deterministic convergence of Algorithm~\ref{alg:polyak}. As a proof of concept, we illustrate the proposed subgradient method synthetic and large-scale real image recovery problems.

%Our proof of convergence is simple, but we rely on previous work for the regularity properties.

%In our current work, we show that under standard statistical assumptions, a simple subgradient method, originally designed for convex problems, converges linearly when properly initialized.

%Our contributions are twofold. On one hand, we show that a classical subgradient method 

Weak convexity and sharpness, taken together, imply existence of a small neighborhood $\mathcal{X}$ of $\{\pm\bar x\}$ devoid of extraneous stationary points of $f_S$ (see Lemma~\ref{lem:region_without_stat}). On the other hand, it is intriguing to determine where the objective function $f_S$ may have stationary points outside of this neighborhood. We complete the paper by proving that as the number of Gaussian measurements increases, the stationary points of the problem converge to a codimension two set, at a controlled rate. This suggests that there are much larger regions than the  neighborhood $\mathcal{X}$, where the objective function has benign geometry. %The tools  we develop are very general, relying only on weak convexity and concentration of $f_S$ around its mean.

 We follow an intuitive and transparent strategy. Setting the groundwork, assume that $a_i$ are i.i.d samples from a normal distribution $\mathsf{N}(0,I_{d\times d})$. Hence the problem $\min f_S$ is an empirical average approximation of the population objective
$$\min_x~f_P(x):=\mathbb{E}_{a}[|(a^Tx)^2-(a^T\bar x)^2|].$$
 Seeking to determine the location of stationary points of $f_S$, we begin by first determining the stationary points of $f_P$. We base our analysis on the elementary observation that $f_P(x)$ depends on $x$ only through the eigenvalues of the rank two matrix $X:=xx^T-\bar x\bar x^T$. More precisely, equality holds:	
 $$f_{P}(x)=\frac{4}{\pi}\left [
\trace(X)\cdot
\arctan \left
(
\sqrt{\left|\frac{\lambda_{\max}(X)}{\lambda_{\min}(X)}\right|
}\right ) +
\sqrt{|\lambda_{\max}(X)
	\lambda_{\min}(X)|}
\right ] -
\trace(X).$$
See Figure~\ref{fig:pop_obj} for a graphical illustration.
%\begin{figure}[h!]
%\centering
 % \includegraphics[scale=0.6]{figs/graph_true_func_copy}
 % \includegraphics[scale=0.7]{figs/level_set_far_true-eps-converted-to}
 % \caption{The graph and the contours of $f_P$.}
%\label{fig:true_func}
%\end{figure}

\begin{figure}[!tbp]
	\centering
	\begin{minipage}[b]{0.59\textwidth}
		\includegraphics[width=\textwidth]{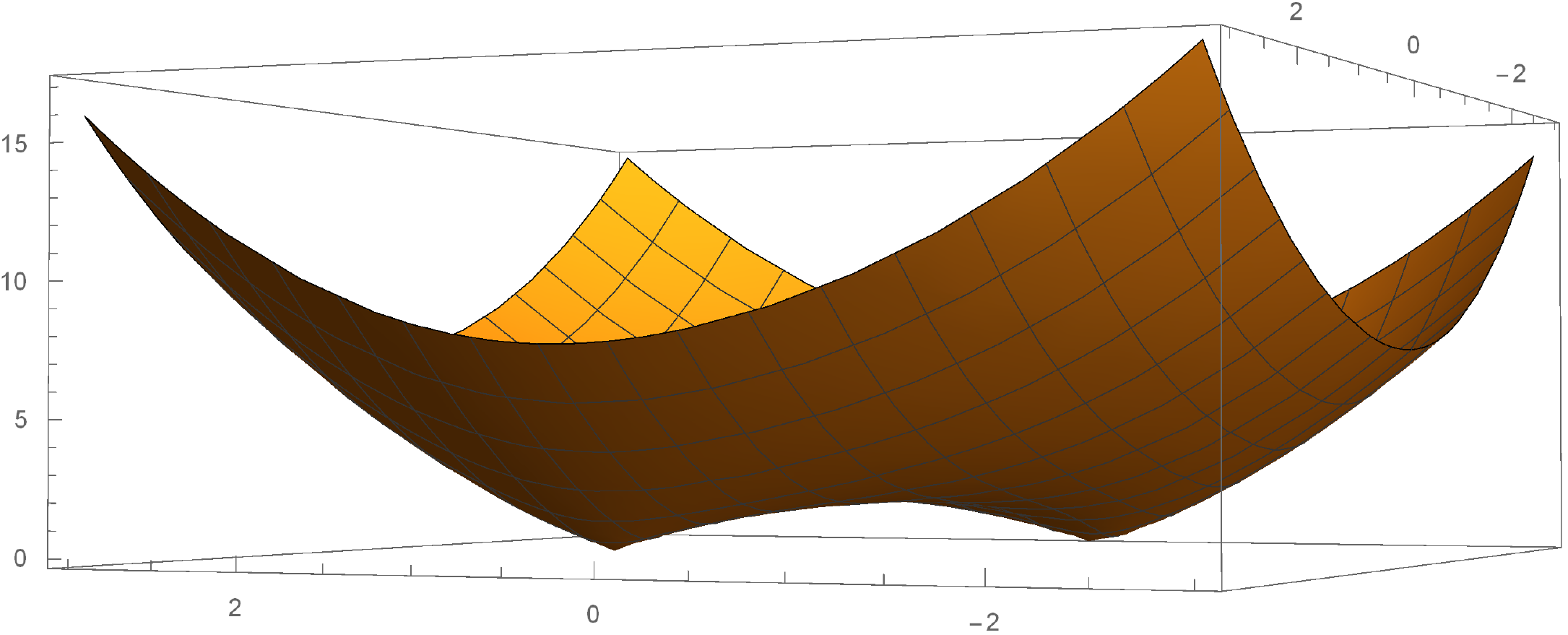}
	%	\label{fig:graph}
	\end{minipage}~
	\begin{minipage}[b]{0.4\textwidth}
		\includegraphics[width=\textwidth]{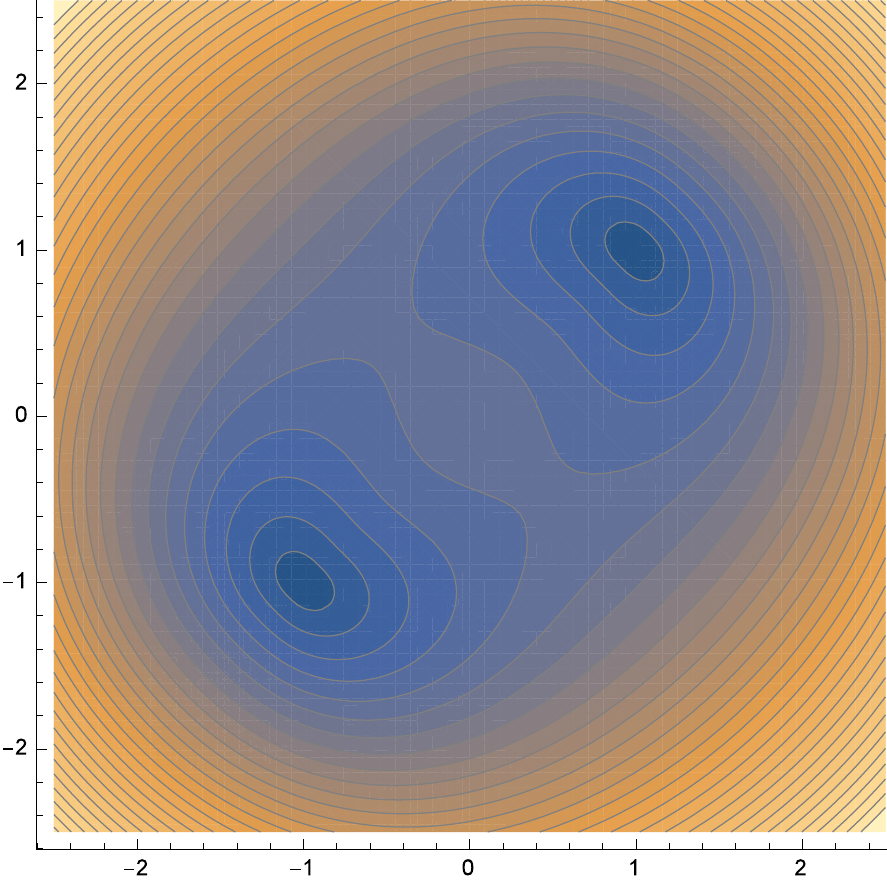}
%		\caption{Contours of $f_P$.}
	%	\label{fig:contour}
	\end{minipage}
		\caption{Depiction of the population objective $f_P$ with $\bar x=(1,1)$: graph  (left), contours  (right).}
		\label{fig:pop_obj}
\end{figure}

Using basic perturbation properties of eigenvalues, we will show that the stationary points of $f_P$ are precisely \begin{equation}\label{eq:stationary_pop}\{0\}\cup\{\pm\bar x\}\cup \{x\in \bar x^{\perp}: \|x\|=c\cdot\|\bar x\|\},\end{equation}
where $c \approx 0.4416$ is a numerical constant.  Intuitively, this region, excluding $\{\pm \bar x\}$, is where numerical methods may stagnate. In particular, $f_P$ has no extraneous stationary points outside of the subspace ${\bar x}^{\perp}$.
Along the way, we prove a number of results in matrix theory, which may be of independent interest. For example, we show that all stationary points of a composition of an orthogonally invariant gauge function with the map $x\mapsto xx^T-\bar x\bar x^T$ must be either perpendicular or collinear with $\bar x$. 

Having located the stationary points of the population objective $f_P$,  we turn to the stationary points of the subsampled function $f_S$.  This is where the techniques commonly used for smooth formulations of the problem, such as those in \cite{phase_nonconv}, are no longer applicable; indeed, the subdifferential $\partial f_P(x)$ is usually a very poor approximation of $\partial f_S(x)$. Nonetheless, we show that the {\em graphs} of the subdifferentials $\partial f_P$ and $\partial f_S$ are close with high probability -- a result closely related to the celebrated Attouch's convergence theorem \cite{attouch}. The analysis of the stationary points of the subsampled objective flows from there. Namely, we show that there is a  constant $C$ such that whenever $m \geq Cd$, all stationary points $x$ of $f_S$ satisfy
\begin{equation*}
\frac{\| x\|\| x-\bar x\|\| x+\bar x\|}{\|\bar x\|^3}\lesssim \sqrt[4]{\frac{d}{m}}
\qquad \textrm{or}\qquad \left\{\begin{aligned}
\left|\tfrac{\| x\|}{\|\bar x\|}-c \right|&\lesssim \sqrt[4]{\tfrac{d}{m}}\cdot \left( 1+\tfrac{\|\bar x\|}{\| x\|}\right)\\
% \left|\|x\|^2-c^2\|\bar x\|^2\right|&\lesssim \varepsilon\left(\frac{\|x\|^2}{\|\bar x\|}+\frac{\|\bar x\|^2}{\|x\|}+\|x\|\right)\\
\tfrac{|\langle  x,\bar x\rangle|}{\|x\| \|\bar x\|}&\lesssim \sqrt[4]{\tfrac{d}{m}} \cdot\tfrac{\|\bar x\|}{\|x\|}
\end{aligned}\right\},
\end{equation*}	
with high probability; compare with~\eqref{eq:stationary_pop}. The argument we present is very general, relying only on weak convexity and concentration of  $f_S$ around its mean. Therefore, we believe that the technique  may be of independent interest. 

We comment in Section~\ref{sec:comments_robust}  on the structure of stationary points for the variant of the phase retrieval problem, in which the measurements $b$ are corrupted by gross outliers. It is straightforward to obtain a full characterization of the stationary points of the population objective using the techniques developed in earlier sections.

    The outline for the paper is as follows. Section~\ref{sec:notation} summarizes notation and basic results we will need. In Section~\ref{sec:alg}, we analyze the linear convergence of the Polyak subgradient method for a class of nonsmooth, nonconvex functions, which includes the subsampled objective $f_S$. In Section~\ref{sec:numerical}, we perform a few proof-of-concept experiments,  illustrating the performance of the Polyak subgradient method on synthetic and real large-scale image recovery problems. Section~\ref{sec:landscape} is devoted to characterizing the nonsmooth landscape of the population objective $f_P$. In Section~\ref{sec:concent_stab}, we develop a concentration theorem for the subdifferential graphs of $f_S$ and $f_P$, and briefly comment on robust extensions. 
%
%\begin{figure}
%\centering
%\begin{subfigure}{.6\textwidth}
%  \centering
%  \includegraphics[width=0.5\linewidth]{figs/graph_true_func_copy}
%  \caption{Graph of $f_P$.}
%  \label{fig:sub1}
%\end{subfigure}
%\begin{subfigure}{.4\textwidth}
%  \centering
%  \includegraphics[width=.5\linewidth]{figs/level_set_far_true-eps-converted-to}
%  \caption{Countours of $f_P$.}
%  \label{fig:sub2}
%\end{subfigure}
%%\caption{Countours of $f_P$.}
%\label{fig:test}
%\end{figure}

%Having mapped its nonsmooth landscape, a natural question is: can we design fast iterative methods for the robust formulation of the phase retrieval problem?
%Seeing that both the population and subsampled objectives have benign geometry, it is reasonable to expect an affirmative answer. 
%* How is it done today? Who Does it?

\section{Notation}\label{sec:notation}

	Throughout, we mostly follow standard notation. The symbol $\R$ will denote the real line, while $\R_+$ and $\R_{++}$ will denote nonnegative and strictly positive real numbers, respectively. 
	We always endow $\R^d$ with the 
	 dot product $\langle x, y \rangle=x^Ty$ and the induced norm $\|x\|:=\sqrt{\langle x,x\rangle}$. 
	The symbol $\mathbb{S}^{d-1}$ will denote the
	unit sphere in $\R^d$, while $B(x,r):=\{y: \|x-y\|<r\}$ will stand for the open ball around $x$ of radius $r>0$.  
	For any set $Q\subset\R^d$, the distance function is defined by  $\dist(x;Q):=\inf_{y\in Q} \|y-x\|.$ The adjoint of a linear map $A\colon\R^d\to\R^m$ will be written as $A^*\colon\R^m\to\R^d$.
	
	Since the main optimization problem we consider is nonsmooth, we will  use some basic generalized derivative constructions. For a more detailed discussion, see for example the monographs of  Mordukhovich~\cite{Mord_1} and Rockafellar-Wets \cite{RW98}.

	Consider a function $f\colon \R^d\to\R$ and a point $\bar x$. The {\em Fr\'{e}chet subdifferential}  of $f$ at $\bar x$, denoted  $\hat{\partial} f(\bar x)$, is the set  of all vectors $v\in\R^d$ satisfying
	$$f(x)\geq f(\bar x)+\langle v,x-\bar x\rangle +o(\|x-\bar x\|)\qquad \textrm{ as }x\to \bar x.$$ 	
	Thus $v$ lies in $\hat{\partial} f(\bar x)$ if and only if the affine function $x\mapsto f(\bar x)+\langle v,x-\bar x\rangle$ minorizes $f$ near $\bar x$ up to first-order. Since the assignment $x\mapsto \hat{\partial} f(x)$ may have poor continuity properties, it is useful to extend the definition slightly. The {\em limiting subdifferential} of $f$ at $\bar x$, denoted $\partial f(\bar x)$, consists of all vectors $v\in \R^d$ such that there exist sequences $x_i$ and $v_i\in \hat\partial f(x_i)$ satisfying $(x_i,f(x_i),v_i)\to(\bar x,f(\bar x),v)$. We say that $\bar x$ is {\em stationary} for $f$ if the inclusion  $0\in \partial f(\bar x)$ holds.
	The {\em graph} of $\partial f$ is the set
	$$\gph \partial f:=\{(x,y)\in \R^d\times\R^d: y\in \partial f(x)\}.$$
	
	For essentially all functions that we will encounter, the two subdifferentials, $\hat{\partial} f(\bar x)$ and ${\partial} f(\bar x)$, coincide. 
	This is the case for $C^1$-smooth functions $f$, where $\hat\partial f(\bar x)$ and $\partial f(\bar x)$ consist only of the gradient $\nabla f(\bar x)$. 
	Similarly for convex function $f$, both subdifferentials reduce to the subdifferential in the sense of convex analysis: 
	$$v\in \partial f(\bar x)\qquad \Longleftrightarrow \qquad f(x)\geq f(\bar x)+\langle v,x-\bar x\rangle\qquad \textrm{ for all }x\in \R^d.$$  
	Most of the nonsmooth functions we will encounter  have a simple composite form: $$F(x):=h(c(x)),$$ where $h\colon\R^m\to\R$ is a finite convex function and $c\colon \R^d\to \R^n$ is a $C^1$-smooth map. For such composite functions, the two subdifferentials coincide, and admit the intuitive chain rule \cite[Theorem 10.6, Corollary 10.9]{RW98}:
	$$\partial F(x)=\nabla c(x)^{*}\partial h(c(x))\qquad \textrm{for all }x\in \R^d.$$
%	Here, $\nabla c(x)^{*}$ denotes the adjoint of the Jacobian matrix $\nabla c(x)$.

A function $f\colon\R^d\to\R$ is called {\em $\rho$-weakly convex} if $f+\frac{\rho}{2}\|\cdot\|^2$ is a convex function. It follows immediately  from \cite[Theorem 12.17]{RW98} that a lower-semicontinuous function $f$ is $\rho$-weakly convex if and only if the inequality
$$f(y)\geq f(x)+\langle v,y-x\rangle-\frac{\rho}{2}\|y-x\|^2,$$
holds for all points $x,y\in\R^d$ and vectors $v\in \partial f(x)$.

Finally, we will often use implicitly the observation that the Lipschitz constant of any lower-semicontinuous function $f$ on a convex open set $U$ coincides with
$
\displaystyle\sup\{\|\zeta\|: x \in U,\,\zeta \in \partial f(x)\}
$; see e.g. \cite[Theorem 9.13]{RW98}.

		\section{Subgradient method}\label{sec:alg}
In this work, we consider the robust formulation of the (real) phase retrieval problem. Setting the stage, suppose we are given vectors $\{a_i\}_{i=1}^m$ in $\R^d$ and measurements $b:=\langle a_i,\bar x\rangle^2$, for a fixed but unknown vector $\bar x$. The goal of the phase retrieval problem is to recover the vector $\bar x\in \R^d$, up to a sign flip. 
The formulation of the problem we consider in this work is:
	$$\min_x~ f_{S}(x):=\frac{1}{m}\sum_{i=1}^m |\langle a_i^T,x\rangle ^2-b_i|.$$
The function $f_S$ (in contrast to other possible formulations) has a number of desirable properties, which we will highlight as we continue.

In this section, we show that the landscape of the  phase retrieval objective $f_S$ favorably lends itself to classical subgradient methods. Namely, with proper initialization and under appropriate statistical assumptions, the  Polyak subgradient method~\cite{poliak}  linearly converges to $\pm x$. 

\subsection{Subgradient method for weakly convex and sharp functions}
The linear convergence guarantees that we present are mostly independent of the structure of $f_S$ and instead rely only on a few general regularity properties, which $f_S$  satisfies under mild statistical assumptions.
%The linear convergence guarantees that we present depend on $f_S$ only in that under mild statistical assumptions, $f_S$ satisfies some desirable regularity properties with high probability. 
Consequently, it will help the exposition in the current section to abstract away from $f_S$.
%In this section, we will consider minimization problems of the form 
%$$\min_{x\in Q}~ g(x)$$
%under the following assumption.

\begin{assumption}\label{ass:gen_g_assump}{\rm
	Fix a function $g \colon \RR^d \rightarrow \RR$ such that there exist real $\rho, \mu > 0$ satisfying the following two properties.
\begin{enumerate}
\item \textbf{Weak Convexity.} \label{assump:g_convex} The function $g + \frac{\rho}{2}\|\cdot \|^2$ is convex; 
\item \textbf{Sharpness.}\label{assump:g_sharp} The inequality holds: $$g(x)-\min g \geq \mu \cdot \dist(x;\mathcal{X})\qquad \textrm{for all }x \in \R^d,$$
where $\mathcal{X}\neq \emptyset$ is the set of minimizers of $g$.
%\item \textbf{Minimizers.} \label{assump:g_minimizers}  The points $\pm \bar x$  minimize $g$.
\end{enumerate}}
\end{assumption}

Duchi and Ruan~\cite{duchi_ruan_PR}, following the work of Eldar-Mendelson \cite{eM}, showed that the  robust phase retrieval loss $f_S(\cdot)$ satisfies Assumption~\ref{ass:gen_g_assump}, under reasonable statistical assumptions. We will discuss these guarantees in Section~\ref{sec:phase_app}, where we will instantiate the subgradient method on the robust phase retrieval objective. Consider now the standard Polyak subgradient method applied to $g$ (Algorithm~\ref{alg:polyak}). %For convenience, we record this results in the following proposition: 
%\begin{lem}[Robust Phase Retrieval Loss Properties]
%Let 
%%\begin{align*}
%$g(x) := \frac{1}{n}\|(Ax)^2 - b\|_1.$
%%\end{align*}
%Then, with probability 
%%$$
%??
%%$$ 
%the function $g$ satisfies Properties~\ref{assump:g_convex},~\ref{assump:g_sharp}, and~\ref{assump:g_minimizers} with 
%\begin{align*}
%\rho = 4\frac{2e^2}{e^2 - 1} \approx 9.2521 && \text{and} && \kappa = ??.
%\end{align*}
%\end{lem}

%
%\begin{algorithm}[H]
% \caption{Polyak Subgradient Method}
% \label{alg:polyak}
%\begin{algorithmic}[1]
%    \Require{$x_0 \in \RR^d$.}
%   % \Statex
%    %\State do
%    \Statex{{\bf Step $k$}:} ($k\geq 1$)
%    \qquad\Statex{\qquad Choose $\zeta_k \in \partial g(x_k)$.}
%    \Statex{\qquad{\bf if} $\zeta_k \neq 0$ {\bf then}}
%    \Statex{\qquad\qquad Set $x_{k+1}=x_{k} - \frac{g(x_k)-\min g}{\|\zeta_k\|^2}\zeta_k$ .}
%	 \Statex{\qquad{\bf else}}
%     \Statex{\qquad\qquad Exit Algorithm.}
%  \end{algorithmic}
%\end{algorithm}

%\setlength{\algomargin}{-2em}
%\SetAlgoSkip{medskip}
\bigskip

\begin{algorithm}[H]
	\KwData{$x_0 \in \RR^d$}
%	\KwResult{how to write algorithm with \LaTeX2e }
%	initialization\;
{\bf Step $k$:} ($k\geq 1$)\\
Choose $\zeta_k \in \partial g(x_k)$.\\

		\eIf{$\zeta_k \neq 0$}{
			Set $x_{k+1}=x_{k} - \frac{g(x_k)-\min g}{\|\zeta_k\|^2}\zeta_k$.
		}{
		Exit algorithm.
	}

\caption{Polyak Subgradient Method}
\label{alg:polyak}
\end{algorithm}
\bigskip
 
%\subsection{$Q$-Linear Convergence}
% In this section, we put aside the phase retrieval objective and instead simply assume $g$ satisfies Properties~\ref{assump:g_convex},~\ref{assump:g_sharp}, and~\ref{assump:g_minimizers}.  

%
%\begin{lem}\label{lem:main_subgrad_ineq} Consider a subgradient step
%$$x^+=x-\alpha \zeta \qquad \textrm{ for some }\zeta\in \partial g(x).$$
%Then the inequality holds:
%$$\dist^2(x^+;S)\leq (1+\alpha\rho)\cdot\dist^2(x;S)-2\alpha c\cdot \dist(x;S)+\alpha^2\|\zeta\|^2.$$
%\end{lem}
%\begin{proof}
%	Let $\bar x\in S$ achieve $\dist(x,S)$.
%	Observe
%	\begin{align*}
%	\dist^2(x^+,S)\leq \|x^+-\bar x\|^2&=\|x-\bar x\|^2+2\alpha\langle \zeta,\bar x-x 
%	\rangle +\alpha^2\|\zeta\|^2\\
%	&\leq \|x-\bar x\|^2 +2\alpha\left(g(\bar x)-g(x)+\frac{\rho}{2}\|x-\bar x\|^2\right)+\alpha^2\|\zeta\|^2\\
%	&\leq \left(1+\alpha\rho\right)\|x-\bar x\|^2-2\alpha c\|x-\bar x\| +\alpha^2\|\zeta\|^2.
%	\end{align*}
%	The result follows.
%%	Thus the quantities $e_k:=\dist^2(x_k,S)$ satisfy the recurrence
%%	\begin{equation}\label{eqn:key_eqn}
%%	e_{k+1}\leq \left(1+\alpha\rho\right)e_k-2\alpha c\sqrt{e_k}+\alpha^2\|\zeta_k\|^2.
%%	\end{equation}
%\end{proof}

As the fist step in the analysis of Algorithm~\ref{alg:polyak}, we must ensure that there are no extraneous stationary points 
of $g$ near $\mathcal{X}$. This is the content of the following lemma.

\begin{lem}[Neighborhood with no stationary points]\label{lem:region_without_stat}
	Suppose Assumption~\ref{ass:gen_g_assump} holds. Then $g$ has no stationary points $x$ satisfying 
	\begin{align}\label{eq:region_for_linear}
	0<\dist(x;\mathcal{X})< \frac{2\mu}{\rho}.
	\end{align}
\end{lem}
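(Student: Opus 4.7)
The plan is to combine the two defining inequalities of Assumption~\ref{ass:gen_g_assump} at a stationary point $x$ and derive a lower bound on $\dist(x;\mathcal{X})$ that contradicts~\eqref{eq:region_for_linear}. Suppose $x$ is stationary, so that $0\in\partial g(x)$. Since $\mathcal{X}$ is the set of minimizers of the lower-semicontinuous function $g$, it is closed; pick (or take a minimizing sequence to) a point $\bar y\in\mathcal{X}$ with $\|x-\bar y\|=\dist(x;\mathcal{X})$ and $g(\bar y)=\min g$.

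First I would apply weak convexity with the subgradient $0\in\partial g(x)$, evaluated at the point $\bar y$, to obtain
$$
\min g \;=\; g(\bar y) \;\geq\; g(x) + \langle 0,\bar y - x\rangle - \tfrac{\rho}{2}\|\bar y - x\|^2 \;=\; g(x) - \tfrac{\rho}{2}\,\dist(x;\mathcal{X})^2,
$$
which rearranges to the upper bound $g(x)-\min g\leq \tfrac{\rho}{2}\dist(x;\mathcal{X})^2$. Next I would invoke sharpness, which gives the lower bound $g(x)-\min g\geq \mu\cdot\dist(x;\mathcal{X})$. Chaining these two inequalities yields
$$
\mu\cdot\dist(x;\mathcal{X}) \;\leq\; \tfrac{\rho}{2}\,\dist(x;\mathcal{X})^2.
$$
If $\dist(x;\mathcal{X})>0$, dividing gives $\dist(x;\mathcal{X})\geq \tfrac{2\mu}{\rho}$, contradicting~\eqref{eq:region_for_linear}. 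Hence no stationary point lies in the annulus described.

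There is no real obstacle here; the only minor subtlety is ensuring that the weak-convexity inequality is applied correctly at a stationary subgradient, and that the nearest-point $\bar y\in\mathcal{X}$ exists (or, failing that, passing to a minimizing sequence $y_n\to \bar y$ and taking a limit, which is legitimate because $g$ is continuous and $\mathcal{X}$ is closed). If one prefers to avoid projection altogether, the same computation can be done directly with a sequence $y_n\in\mathcal{X}$ satisfying $\|x-y_n\|\to\dist(x;\mathcal{X})$, and the resulting inequalities pass to the limit.
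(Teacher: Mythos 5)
Your proof is correct and takes essentially the same approach as the paper: apply weak convexity at the stationary point $x$ with subgradient $0$, evaluate at a nearest point of $\mathcal{X}$, combine with sharpness, and divide by $\dist(x;\mathcal{X})$. Your extra remarks on existence of the nearest point (closedness of $\mathcal{X}$, minimizing sequences) are harmless and not needed in the paper's streamlined presentation.
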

\begin{proof}
	Consider a stationary point $x$ of $g$, which is outside of $\mathcal{X}$. Let $\bar x\in \mathcal{X}$ be a point satisfying $\|x-\bar x\|=\dist(x;\mathcal{X})$.
 Properties~\ref{assump:g_convex} and ~\ref{assump:g_sharp} then imply
	\begin{align*}
	\mu\cdot \dist(x; \mathcal{X}) \leq g(x)-g(\bar x) \leq  \frac{\rho}{2}\|x - \bar x\|^2=\frac{\rho}{2}\cdot\dist^2(x; \mathcal{X}).
	\end{align*}
	Dividing through by $\dist(x; \mathcal{X})$, the result follows.
\end{proof}

The following Theorem~\ref{thm:qlinear} -- the main result of this subsection -- shows that when Algorithm~\ref{alg:polyak} is initialized within a certain tube $\mathcal{T}$ of $\mathcal{X}$, the iterates $x_k$ stay within the tube and converge linearly to $\mathcal{X}$. It is interesting to note that the rate of local linear  convergence does not depend on the weak convexity constant $\rho$; indeed, the value $\rho$  only dictates the size of the tube $\mathcal{T}$.

\begin{thm}[Linear rate]\label{thm:qlinear}
	Suppose Assumption~\ref{ass:gen_g_assump} holds. Fix a real $\gamma \in (0,1)$ and define  the tube 
	$$\mathcal{T}:=\left\{x\in \R^d: \dist(x;\mathcal{X})\leq \gamma\cdot\frac{\mu}{\rho}\right\},$$
	and the corresponding Lipschitz constant 
	$$\displaystyle L_g:=\sup_{x\in \mathcal{T},\,\zeta\in \partial g(x) } \|\zeta\|.$$
	Then  Algorithm~\ref{alg:polyak} initialized at any point $x_0\in \mathcal{T}$ produces iterates that 
	%Then 
	%$$
	%x_0 \in B:= B\left(\bar x, c\cdot \frac{2\kappa\|\bar x\|}{\rho + \kappa} \right). 
	%$$
	converge $Q$-linearly to $\mathcal{X}$ at the rate: 
	\begin{align}\label{eqn:lin_rate1}
	\dist^2(x_{k+1};\mathcal{X}) \leq \left(1-\frac{(1-\gamma) \mu ^2}{L_g^2}\right)\dist^2(x_{k};\mathcal{X}),%\left(1  - \frac{\rho\kappa\|x_k + \bar x \|}{\|\zeta_k\|^2}\left( \frac{\kappa}{\rho}\|x_k + \bar x\| - \|x_k - \bar x\| \right)\right)
	\end{align}
\end{thm}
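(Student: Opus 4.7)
The plan is to argue by induction on $k$ that every iterate stays inside the tube $\mathcal{T}$ and contracts toward $\mathcal{X}$ at the stated geometric rate. The base case is the hypothesis $x_0 \in \mathcal{T}$. For the inductive step, I would first dispatch the case $\zeta_k = 0$: then $x_k$ is stationary, and since $\gamma \mu/\rho < 2\mu/\rho$, Lemma~\ref{lem:region_without_stat} forces $x_k \in \mathcal{X}$, so the algorithm halts at an optimum and the bound holds trivially. Hence I may assume $\zeta_k \neq 0$ and the update is well-defined.

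For the main estimate, let $\bar x_k \in \mathcal{X}$ denote any nearest point to $x_k$ in $\mathcal{X}$, so that $\|x_k - \bar x_k\| = \dist(x_k;\mathcal{X})$. Expanding squares gives
\begin{equation*}
\|x_{k+1} - \bar x_k\|^2 = \dist^2(x_k;\mathcal{X}) - 2\,\frac{g(x_k) - \min g}{\|\zeta_k\|^2}\,\langle \zeta_k, x_k - \bar x_k\rangle + \frac{(g(x_k)-\min g)^2}{\|\zeta_k\|^2}.
\end{equation*}
I would then invoke weak convexity of $g$ at $x_k$ with $\bar x_k$ to lower-bound the inner product:
\begin{equation*}
\langle \zeta_k, x_k - \bar x_k\rangle \;\geq\; g(x_k) - g(\bar x_k) - \tfrac{\rho}{2}\dist^2(x_k;\mathcal{X}) \;=\; (g(x_k)-\min g) - \tfrac{\rho}{2}\dist^2(x_k;\mathcal{X}).
\end{equation*}
Plugging this bound back in produces the cleaner expression
\begin{equation*}
\|x_{k+1}-\bar x_k\|^2 \;\leq\; \dist^2(x_k;\mathcal{X}) + \frac{g(x_k)-\min g}{\|\zeta_k\|^2}\Bigl[\rho\,\dist^2(x_k;\mathcal{X}) - (g(x_k)-\min g)\Bigr].
\end{equation*}

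The crucial step is to show the bracket is negative and dominates. Using sharpness in the form $g(x_k)-\min g \geq \mu\,\dist(x_k;\mathcal{X})$, the bracket is at most $\dist(x_k;\mathcal{X})\bigl(\rho\,\dist(x_k;\mathcal{X}) - \mu\bigr)$, and the tube hypothesis $\dist(x_k;\mathcal{X}) \leq \gamma\mu/\rho$ turns this into a bound of $-(1-\gamma)\mu\,\dist(x_k;\mathcal{X})$. Applying sharpness once more to the prefactor $(g(x_k)-\min g)/\|\zeta_k\|^2$ and bounding $\|\zeta_k\| \leq L_g$ yields
\begin{equation*}
\|x_{k+1}-\bar x_k\|^2 \;\leq\; \left(1 - \frac{(1-\gamma)\mu^2}{L_g^2}\right)\dist^2(x_k;\mathcal{X}).
\end{equation*}
Since $\dist(x_{k+1};\mathcal{X}) \leq \|x_{k+1}-\bar x_k\|$, this is exactly \eqref{eqn:lin_rate1}. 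Because the contraction factor is at most $1$, we also have $\dist(x_{k+1};\mathcal{X}) \leq \dist(x_k;\mathcal{X}) \leq \gamma\mu/\rho$, so $x_{k+1} \in \mathcal{T}$ and the induction proceeds.

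The main obstacle, and the heart of the argument, is controlling the weak-convexity slack term $\tfrac{\rho}{2}\dist^2(x_k;\mathcal{X})$: on its own it threatens to overwhelm the descent guaranteed by sharpness. The tube radius $\gamma\mu/\rho$ is tuned precisely so that $\rho\,\dist(x_k;\mathcal{X}) - \mu \leq -(1-\gamma)\mu$, which is what makes sharpness win and produces a contraction factor independent of $\rho$. Everything else in the proof is routine algebra around the Polyak-step expansion.
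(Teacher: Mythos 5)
Your proof is correct and follows essentially the same route as the paper's: expand the Polyak step, apply weak convexity at $(x_k,\zeta_k)$ against the nearest point in $\mathcal{X}$, invoke sharpness twice (once on the bracket and once on the prefactor), and use the tube condition $\dist(x_k;\mathcal{X}) \le \gamma\mu/\rho$ to force the bracket to be at most $-(1-\gamma)\mu\,\dist(x_k;\mathcal{X})$. Your handling of the degenerate case ($\zeta_k=0$ implies $x_k\in\mathcal{X}$ via Lemma~\ref{lem:region_without_stat}) is the contrapositive of what the paper does, but logically identical, and the closing induction step confirming $x_{k+1}\in\mathcal{T}$ is also as in the paper.
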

\begin{proof}
	We proceed by induction. Suppose that the theorem holds up to iteration $k$.  We will prove the inequality \eqref{eqn:lin_rate1}.
	To this end, let $\bar x\in \mathcal{X}$ be a point satisfying $\|x_k-\bar x\|=\dist(x_k;\mathcal{X})$. Note that if $x_k$ lies in $\mathcal{X}$, there is nothing to prove. Thus we may suppose $x_k\notin \mathcal{X}$.
	Note that the inductive hypothesis implies $\dist(x_{k};S)\leq \dist(x_{0};S)$ and therefore $x_k$ lies in $\mathcal{T}$. Lemma~\ref{thm:qlinear} therefore guarantees
 $\zeta_k\neq 0$.
	Using Properties~\ref{assump:g_convex} and~\ref{assump:g_sharp}, we successively deduce 
	\begin{align*}
	\|x_{k+1} - \bar x\|^2 
	&= \|x_{k} - \bar x\|^2 + 2\dotp{x_{k} - \bar x, x_{k+1} - x_k} + \|x_{k+1} - x_k\|^2 \\
	&= \|x_{k} - \bar x\|^2 + \frac{2(g(x_k) - g(\bar x))}{\|\zeta_k\|^2}\cdot\dotp{\zeta_k,\bar x - x_{k}} + \frac{(g(x_k) - g(\bar x))^2}{\|\zeta_k\|^2}\\
	&\leq \|x_{k} - \bar x\|^2 + \frac{2(g(x_k) - g(\bar x))}{\|\zeta_k\|^2}\left( g(\bar x) - g(x_k) + \frac{\rho}{2}\|x_k - \bar x\|^2 \right)\ + \frac{(g(x_k) - g(\bar x))^2}{\|\zeta_k\|^2} \\
	&= \|x_{k} - \bar x\|^2 + \frac{(g(x_k) - g(\bar x))}{\|\zeta_k\|^2}\left(\rho\|x_k - \bar x\|^2 -  (g(x_k) - g(\bar x))  \right)\\
	&\leq \|x_{k} - \bar x\|^2 + \frac{(g(x_k) - g(\bar x))}{\|\zeta_k\|^2}\left(\rho\|x_k - \bar x\|^2 -  \mu\|x_k - \bar x\|  \right) \\
	&= \|x_{k} - \bar x\|^2 + \frac{\rho (g(x_k) - g(\bar x))}{\|\zeta_k\|^2}\left( \|x_k - \bar x\| -\frac{\mu}{\rho}\right)\|x_k - \bar x\|.
	\end{align*}
	Combining the inclusion $x_k\in \mathcal{T}$ with sharpness (Assumption~\ref{assump:g_sharp}), we therefore deduce
	$$\dist^2(x_{k+1};\mathcal{X})\leq \|x_{k+1} - \bar x\|^2\leq \left(1-\frac{(1-\gamma) \mu ^2}{\|\zeta_k\|^2}\right)\|x_k-\bar x\|^2.$$
	The result follows.
%	Equation~\eqref{eqn:fun_chain} shows the inequality
%	$\|x_k-\bar x\|\leq \frac{\kappa}{\rho}\|x_k+\bar x\|$. Hence we may apply Property~\ref{assump:g_sharp} again to lower bound $g(x_k) - g(\bar x)$, yielding
%	\begin{align*}
%	\|x_{k+1} - \bar x\|^2  &\leq \left(1  - \frac{\kappa\rho\|x_k + \bar x \|}{\|\zeta_k\|^2}\left( \frac{\kappa}{\rho}\|x_k + \bar x\| - \|x_k - \bar x\| \right)\right)\|x_k - \bar x\|^2.
%	\end{align*}
%	Finally, using the inequalities, $\|x_k+\bar x\|\geq \frac{2(\rho + (1-\gamma)\kappa)}{\rho + \kappa}\|\bar x\|$ and $\frac{\kappa}{\rho}\|x_k + \bar x\|-\|x-\bar x\|\geq \frac{2(1-\gamma)\kappa^2}{\rho(\rho+\kappa)}\|\bar x\|$
%	in \eqref{eqn:fun_chain}, yields
%	$$\|x_{k+1} - \bar x\|^2\leq  \left(1  - \frac{4(1-\gamma)\kappa^3(\rho + (1-\gamma)\kappa)\|\bar x\|^2}{(\rho+\kappa)^2L_g^2}\right)\|x_k - \bar x\|^2.
%	$$
%	To finish the inductive step, we only need to note $\|x_{k+1} - \bar x\| \leq \|x_k - \bar x\|$, and hence $x_{k+1}$ lies in $B$. The result follows. 
\end{proof}

\subsection{Convergence for the phase retrieval objective}\label{sec:phase_app}
We now turn to an application of Theorem~\ref{thm:qlinear} to the phase retrieval loss $f_S$. In particular, to run the subgradient method, we must only compute a subgradient of $f_S$, which can be easily done using the chain rule:
 $$
 \frac{1}{m}\sum_{i=1}^m 2\dotp{a_i, x}\cdot\sign(\langle a_i, x\rangle^2 - b_i)a_i \in \partial f_S(x).
 $$
Each iteration of Algorithm~\ref{alg:polyak} thus requires a single pass through the set of measurement vectors. We will see momentarily that under mild statistical assumptions, $\{\pm\bar x\}$ are the unique minimizers of $f_S$, as soon as $m>2d$.

% Next, observe that Property~\ref{assump:g_minimizers} clearly holds for $f_S$. 
Thus for a successful application of Theorem~\ref{thm:qlinear}, we must only address the following questions:
\begin{enumerate}[{(\em i)}]
	\item Describe the statistical conditions on the data generating mechanism, which insure that Assumption~\ref{ass:gen_g_assump}  holds with high probability.
	\item Estimate the Lipschitz constant of $f_S$ on the union of balls $\mathcal{T}=B(\bar x, \frac{\gamma\mu}{\rho})\cup B(-\bar x, \frac{\gamma\mu}{\rho})$.
	\item Describe a good initialization procedure for producing $x_0\in \mathcal{T}$.
\end{enumerate}
Essentially all of these points follow from the work of Duchi and Ruan~\cite{duchi_ruan_PR}, Eldar-Mendelson \cite{eM}, and Wang et al. \cite{eldar_init}. We summarize them here for the sake of completeness. Henceforth, let us suppose that $a_i\in \R^d$ (for $i=1,\ldots,m$) are independent realizations of a random vector $a\in \R^d$. 

\subsubsection{Sharpness}
In order to ensure sharpness (or rather the stronger  ``stability'' property \cite{eM}), we make the following assumption on the distribution of $a$.

\begin{assumption}\label{ass:weird_prob}
	{\rm 
		 There exist constants $\kappa_{\mathrm{st}}^*,p_0 > 0$ such that for all $u, v \in \mathbb{S}^{d-1}$, we have
			$$
			\mathbb{P}\left( |\dotp{a, v}\dotp{a,u}| \geq \kappa_{\mathrm{st}}^\ast \right) \geq  p_0,
			$$
	%		where 
	%		$$
	%		\kappa_{\mathrm{st}}^\ast := \inf_{u \neq 0, v \neq 0}\frac{\kappa_{\mathrm{st}}(u, v)}{\|u\|\|v\|} > 0.
	%		$$
}
\end{assumption}

Roughly speaking, this mild assumption simply says that the random vector $a$ has sufficient support in all directions.
In particular, the standard Gaussian $a\sim \mathsf{N}(0,I_d)$ satisfies Assumption~\ref{ass:weird_prob} with $\kappa_{\mathrm{st}}^*=0.365$ and $p_0=0.25$; see \cite[Example 1]{duchi_ruan_PR}. The following is proved in \cite[Corollary 3.1]{duchi_ruan_PR}.

\begin{thm}[Sharpness]\label{thm:sharp_duchi}
	Suppose that Assumption~\ref{ass:weird_prob} holds. Then there exists a numerical constant $c < \infty$ such that if $ mp_0^2 \geq cd$, we have 
	$$
	\mathbb{P}\left(f_S(x) - f_S(\bar x)  \geq \frac{1}{2}\kappa_{\mathrm{st}}^\ast p_0\|x - \bar x\| \|x + \bar x\| \quad \textrm{for all } x \in \RR^d \right) \geq 1 - 2\exp\left(-\frac{mp_0^2}{32}\right).
	$$
\end{thm}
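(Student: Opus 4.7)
The plan, following Mendelson's small-ball method as implemented by Duchi--Ruan \cite{duchi_ruan_PR}, is to reduce sharpness to a uniform deviation inequality for a VC class of indicators. First, I would exploit the factorization $\alpha^2-\beta^2=(\alpha-\beta)(\alpha+\beta)$. Since $b_i=\dotp{a_i,\bar x}^2$ gives $f_S(\bar x)=0$, we have
$$f_S(x)-f_S(\bar x)=\frac{1}{m}\sum_{i=1}^m \abs{\dotp{a_i,x-\bar x}\dotp{a_i,x+\bar x}}=\|x-\bar x\|\|x+\bar x\|\cdot\frac{1}{m}\sum_{i=1}^m \abs{\dotp{a_i,u}\dotp{a_i,v}},$$
where $u,v\in\mathbb{S}^{d-1}$ are the unit vectors along $x-\bar x$ and $x+\bar x$ (the degenerate cases $x=\pm\bar x$ are trivial). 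It thus suffices to establish
$$\inf_{u,v\in\mathbb{S}^{d-1}}\frac{1}{m}\sum_{i=1}^m \abs{\dotp{a_i,u}\dotp{a_i,v}}\geq \tfrac{1}{2}\kappa_{\mathrm{st}}^\ast p_0$$
with probability at least $1-2\exp(-mp_0^2/32)$.

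Next, I would truncate each summand using Assumption~\ref{ass:weird_prob}. Writing $A_i(u,v):=\{\abs{\dotp{a_i,u}\dotp{a_i,v}}\geq \kappa_{\mathrm{st}}^\ast\}$, the pointwise bound $\abs{\dotp{a_i,u}\dotp{a_i,v}}\geq \kappa_{\mathrm{st}}^\ast\cdot\mathbf{1}_{A_i(u,v)}$ together with the hypothesis $\mathbb{P}(A_i(u,v))\geq p_0$ reduces the task to proving the one-sided uniform deviation
$$\sup_{u,v\in\mathbb{S}^{d-1}}\left(\mathbb{P}(A(u,v))-\frac{1}{m}\sum_{i=1}^m\mathbf{1}_{A_i(u,v)}\right)\leq \tfrac{p_0}{2}$$
with the stated probability; indeed, such a bound would yield $\frac{1}{m}\sum_i\mathbf{1}_{A_i(u,v)}\geq p_0/2$ uniformly, hence $\frac{1}{m}\sum_i\abs{\dotp{a_i,u}\dotp{a_i,v}}\geq \frac{1}{2}\kappa_{\mathrm{st}}^\ast p_0$ as required.

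The main obstacle is this uniform deviation. The class $\mathcal{F}=\{a\mapsto \mathbf{1}_{A(u,v)}:u,v\in\mathbb{S}^{d-1}\}$ is a VC-subgraph class of dimension $O(d)$, since each $1$-set is cut out by the low-degree polynomial inequality $(\dotp{a,u}\dotp{a,v})^2\geq (\kappa_{\mathrm{st}}^\ast)^2$ in $a$ with only $O(d)$ free parameters. A standard Vapnik--Chervonenkis uniform deviation bound, combined with a Bernstein-type sharpening that exploits the variance bound $\var(\mathbf{1}_A)\leq p_0$, produces a one-sided deviation of order $\sqrt{p_0 d/m}+d/m$, which is dominated by $p_0/2$ precisely when $m p_0^2\gtrsim d$. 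A naive $\varepsilon$-net argument on $\mathbb{S}^{d-1}\times\mathbb{S}^{d-1}$ cannot replace this step because the indicators are discontinuous in $(u,v)$; it is the VC (or equivalently small-ball) machinery that gives the sharp Bernoulli-scale exponent, and tracking constants through it produces the claimed $\exp(-mp_0^2/32)$ tail.
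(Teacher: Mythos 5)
The paper itself provides no proof of Theorem~\ref{thm:sharp_duchi}; it simply cites \cite[Corollary~3.1]{duchi_ruan_PR}. Your reconstruction via the Mendelson small-ball method (factorize $\dotp{a_i,x}^2-\dotp{a_i,\bar x}^2=\dotp{a_i,x-\bar x}\dotp{a_i,x+\bar x}$, pass to unit vectors, truncate to indicators, then control a one-sided uniform deviation over a class of indicator sets) has the correct overall architecture and is indeed the route Duchi--Ruan and Eldar--Mendelson take.

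However, the last paragraph contains two errors that happen to cancel. First, the claimed variance bound $\var(\mathbf{1}_{A(u,v)})\leq p_0$ is false: Assumption~\ref{ass:weird_prob} only gives the \emph{lower} bound $\mathbb{P}(A(u,v))\geq p_0$, and $\var(\mathbf{1}_A)=p(1-p)$ with $p\geq p_0$ can be as large as $1/4$ in the interesting regime $p_0\ll 1$. With only the trivial bound $\var\leq 1/4$ the VC/Bernstein deviation is of order $\sqrt{d/m}+d/m$, not $\sqrt{p_0 d/m}+d/m$. Second, even granting your optimistic deviation bound, the algebra is wrong: $\sqrt{p_0 d/m}\leq p_0/2$ rearranges to $m p_0\gtrsim d$, not $m p_0^2\gtrsim d$. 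The two mistakes compensate: with the \emph{correct} deviation $\sqrt{d/m}\leq p_0/2$, one does obtain the stated sample size $m p_0^2\gtrsim d$. So the final conclusion and the tail $\exp(-cm p_0^2)$ are right, but you should delete the incorrect $\var(\mathbf{1}_A)\leq p_0$ step and simply carry the worst-case $O(1)$ variance through; the $p_0$-dependence in the sample size then enters only through the target level $p_0/2$ of the deviation, exactly as in the small-ball method.
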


To simplify notation, set $\dist(x;\bar x):=\min\{\|x-\bar x\|,\|x+\bar x\|\}$.
Thus Assumption~\ref{ass:weird_prob}  implies, with high probability, that $f_S$ is sharp. Indeed, Theorem~\ref{thm:sharp_duchi} directly implies that with high probability we have 
\begin{align*}
f_S(x) - f_S(\bar x)&\geq  \tfrac{1}{2}\kappa_{\mathrm{st}}^\ast p_0\|x - \bar x\| \|x + \bar x\|\\
&\geq \tfrac{1}{2}\kappa_{\mathrm{st}}^\ast p_0\cdot\min\{\|x-\bar x\|,\|x+\bar x\|\}\cdot\max\{\|x-\bar x\|,\|x+\bar x\|\} \\
&\geq  \tfrac{1}{2}\kappa_{\mathrm{st}}^\ast p_0 \|\bar x\|\cdot \dist(x;\bar x).
\end{align*}
Thus the sharpness condition in Assumption~\ref{ass:gen_g_assump} holds for $g=f_S$ with $\mu=\frac{1}{2}\kappa_{\mathrm{st}}^\ast p_0 \|\bar x\|$. 

\subsubsection{Weak convexity}
We next look at weak convexity of the objective $f_S$. We will need the following definition.
\begin{definition}
	{\rm 	A random vector $a\in\R^d$ is {\em $\sigma^2$-sub-Gaussian} if for all unit vectors $v\in \mathbb{S}^{d-1}$, we have 
		$$\mathbb{E}\left[\exp\left(\frac{\langle a,v\rangle^2}{\sigma^2}\right)\right]\leq e.$$		
	}
\end{definition}

\begin{assumption}\label{ass:subgauss}
	{\rm 
		The random vector $a$ is $\sigma^2$-sub-Gaussian.
	}
\end{assumption}

The following is a direct consequence of \cite[Corollary 3.2]{duchi_ruan_PR}.

\begin{thm}[Weak convexity]\label{lem:weak_conv}
	Suppose that Assumption~\ref{ass:subgauss} holds.  Then there exists a numerical constant $c<\infty$ such that whenever $m\geq cd$, the function $f_S$ is $4\sigma^2$-weakly convex, with probability at least $1-\exp\left(-\frac{m}{c}\right)$.  
\end{thm}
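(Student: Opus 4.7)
The strategy is to establish weak convexity in two stages: a deterministic structural step that reduces the problem to controlling the operator norm of the empirical covariance $\hat\Sigma := \tfrac{1}{m}\sum_{i=1}^m a_i a_i^T$, followed by a standard sub-Gaussian concentration estimate on $\|\hat\Sigma\|_{\mathrm{op}}$. Concretely, I aim to show the deterministic implication
$$f_S \text{ is } 2\|\hat\Sigma\|_{\mathrm{op}}\text{-weakly convex},$$
and then establish $\|\hat\Sigma\|_{\mathrm{op}} \le 2\sigma^2$ with probability at least $1-\exp(-m/c)$ whenever $m \ge cd$.

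For the deterministic step, I would exploit the composite form $f_S = h\circ c$ with $h(t_1,\ldots,t_m)=\tfrac{1}{m}\sum|t_i|$ convex and $c_i(x)=\langle a_i,x\rangle^2-b_i$ quadratic with constant Hessian $2a_i a_i^T$. The subdifferential chain rule recalled in Section~\ref{sec:notation} implies that any $v\in\partial f_S(x)$ admits a representation $v=\tfrac{1}{m}\sum_i s_i\nabla c_i(x)$ with $s_i\in\partial|\cdot|(c_i(x))\subseteq[-1,1]$. Combining convexity of $|\cdot|$, which gives $|c_i(y)|-|c_i(x)|\ge s_i(c_i(y)-c_i(x))$, with the exact quadratic expansion $c_i(y)-c_i(x)=\langle\nabla c_i(x),y-x\rangle + \langle y-x,a_ia_i^T(y-x)\rangle$, and averaging over $i$, produces
$$f_S(y)-f_S(x)-\langle v,y-x\rangle \;\ge\; \frac{1}{m}\sum_{i=1}^m s_i\,(y-x)^T a_i a_i^T (y-x) \;\ge\; -(y-x)^T\hat\Sigma(y-x),$$
where the last step uses $|s_i|\le 1$ and $a_i a_i^T\succeq 0$. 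This is exactly the two-point characterization of $2\|\hat\Sigma\|_{\mathrm{op}}$-weak convexity from Section~\ref{sec:notation}.

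For the probabilistic step, the sub-Gaussian assumption and the elementary bound $e^t\ge 1+t$ give $\|\Sigma\|_{\mathrm{op}}\le(e-1)\sigma^2$ for $\Sigma=\EE[aa^T]$, since $e\ge \EE[\exp(\langle a,v\rangle^2/\sigma^2)]\ge 1+\EE[\langle a,v\rangle^2]/\sigma^2$ for each unit $v$. A standard matrix concentration inequality for sums of independent sub-exponential rank-one matrices (e.g.\ a net argument combined with Bernstein's inequality, as in Vershynin's textbook) then yields
$$\|\hat\Sigma-\Sigma\|_{\mathrm{op}} \;\le\; C_1\sigma^2\Bigl(\sqrt{\tfrac{d}{m}}+\tfrac{t}{\sqrt{m}}\Bigr)$$
with probability at least $1-2\exp(-t^2)$. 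Choosing $t=\sqrt{m}/c_2$ and $m\ge cd$ for sufficiently large constants $c,c_2$ drives the deviation below $\sigma^2(2-(e-1))$, giving $\|\hat\Sigma\|_{\mathrm{op}}\le 2\sigma^2$ with probability at least $1-\exp(-m/c)$, hence $2\|\hat\Sigma\|_{\mathrm{op}}\le 4\sigma^2$. The main obstacle is this concentration estimate for heavy-tailed (sub-Gaussian rather than bounded) rank-one summands, which requires a truncation/peeling argument; invoking the ready-made statement of Duchi--Ruan~\cite[Cor.\ 3.2]{duchi_ruan_PR} bypasses that standard but lengthy derivation.
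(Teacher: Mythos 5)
Your proof is correct and reaches the same conclusion, but it is organized differently from the paper's. The paper treats the probabilistic ingredient as a black box: it cites Duchi--Ruan's Corollary 3.2, which directly provides the uniform model-quality bound $f_S(y)\geq h(F(x)+\nabla F(x)(y-x))-2\sigma^2\|y-x\|^2$ for all $x,y$, and then converts that to the two-point weak-convexity inequality via convexity of $h$ and the chain rule. You instead split the work into a fully explicit deterministic reduction and a classical probabilistic lemma: the algebra with the exact Hessian $\nabla^2 c_i = 2a_ia_i^T$, the sign bound $|s_i|\le 1$, and the positive semidefiniteness of $a_ia_i^T$ shows directly that $f_S$ is $2\|\hat\Sigma\|_{\mathrm{op}}$-weakly convex; you then call in the standard sub-Gaussian covariance concentration to drive $\|\hat\Sigma\|_{\mathrm{op}}$ down to $2\sigma^2$. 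This is a cleaner decomposition: it isolates the operator norm of the empirical second moment matrix as the single quantity controlling the weak-convexity modulus, and it reveals that Duchi--Ruan's model-quality estimate is, for this quadratic map, nothing more than a covariance-concentration statement in disguise (since $h(F(x)+\nabla F(x)(y-x))-f_S(y)\le (y-x)^T\hat\Sigma(y-x)$ by the same triangle-inequality step you use). The paper's route is shorter in print because it delegates the entire random-matrix argument to the citation; your route is more transparent and more elementary, at the modest cost of re-deriving (or at least citing a textbook version of) the Bernstein-plus-net bound for $\|\hat\Sigma-\Sigma\|_{\mathrm{op}}$. The constant bookkeeping also checks out: your $e^t\geq 1+t$ estimate gives $\|\Sigma\|_{\mathrm{op}}\leq(e-1)\sigma^2$, and since $3-e>0$ there is a genuine positive gap for the deviation term to fill, so with $m\geq cd$ and $c$ large enough the target $\|\hat\Sigma\|_{\mathrm{op}}\leq 2\sigma^2$ (hence $\rho=4\sigma^2$) is achievable with the claimed failure probability.
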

\begin{proof}
	This follows almost immediately from \cite[Corollary 3.2]{duchi_ruan_PR}. 
	Define the separable function $h(z_1,\ldots, z_m):=\frac{1}{m}\sum_{i=1}^m |z_i|$ and the map $F\colon\R^d\to\R^m$ with the $i$'th coordinate given by  $F_i(x):=(a^T_ix)^2-b_i$. Observe the equality $f_S(x)=h(F(x))$.
	Corollary 3.2 in \cite{duchi_ruan_PR} shows that there exists a numerical constant $c<\infty$ such that whenever $m\geq cd$, with probability at least $1-\exp\left(-\frac{m}{c}\right)$, we have 
	$$f_S(y)\geq h(F(x)+\nabla F(x)(y-x))-2\sigma^2\|y-x\|^2\qquad \textrm{for all }x,y\in \R^d.$$
	Since $h$ is convex, for any vector $v\in \partial h(F(x))$ we have
	$$h(F(x)+\nabla F(x)(y-x))\geq h(F(x))+\langle v,\nabla F(x)(y-x)\rangle=f_S(x)+ \langle \nabla F(x)^*v,y-x\rangle.$$
	Taking into account the equality $\partial f_S(x)=\nabla F(x)^*\partial h(F(x))$, we conclude that $f_S$ is $4\sigma^2$-weakly convex.
\end{proof}

\subsubsection{Lipschitz constant on a ball}
Let us next estimate the Lipschitz constant of $f_S$ on a ball of a fixed radius. To this end, observe the chain of inequalities
\begin{equation}
\begin{aligned}\label{eqn:main_ineq_lip}
		 |f_S(x)-f_S(y)|&\leq \frac{1}{m}\sum_{i=1}^m \left||\langle a_i,x\rangle^2-\langle a_i,\bar x\rangle^2 |- |\langle a_i,y\rangle^2-\langle a_i,\bar x\rangle^2|\right|\\
		 &\leq \frac{1}{m}\sum_{i=1}^m |\langle a_i,x\rangle^2-\langle a_i,y\rangle^2|\\
		 &=\|x-y\|\|x+y\|\cdot \frac{1}{m}\sum_{i=1}^m |\langle a_i,v\rangle\langle a_i,w\rangle|,
		 \end{aligned}
		 \end{equation}
where we set $v:=\frac{x-y}{\|x-y\|}$ and $w:=\frac{x+y}{\|x+y\|}$. Thus we would like to upper-bound the term $\frac{1}{m}\sum_{i=1}^m |\langle a_i,v\rangle\langle a_i,w\rangle|$ by a numerical constant, with high probability. Intuitively, there are two key ingredients that would ensure this bound: the random vector $a\in\R^d$ should have light tails (sub-Gaussian) and $a$ should not concentrate too much along any single direction. A standard way to model the latter is through an isotropy assumption.

\begin{definition}[Isotropy]
{\rm
A random vector $a\in \R^d$ is {\em isotropic} if $\mathbb{E}[aa^T]=I_d$.
}
\end{definition}

Note that $a\in \R^d$ is isotropic if and only if $\mathbb{E}[\langle a,v\rangle^2]=1$ for all unit vectors $v\in \mathbb{S}^{d-1}$.

\begin{assumption}\label{ass:isotropy}
	{\rm 
		The random vector $a$ is isotropic.
	}
\end{assumption}

 Assumptions~\ref{ass:subgauss} and \ref{ass:isotropy} imply that the term $\frac{1}{m}\sum_{i=1}^m |\langle a_i,v\rangle\langle a_i,w\rangle|$ cannot deviate too much from its mean, uniformly over all unit vectors $v,w\in \R^d$. Indeed, the following is a special case of \cite[Theorem 2.8]{eM}.

\begin{thm}[Concentration]\label{thm:em}
	Suppose that Assumptions~\ref{ass:subgauss} and \ref{ass:isotropy} hold.  Then there exist constants  $c_1,c_2,c_3$ depending only on $\sigma$ so that with probability at least
	$1-2\exp(-c_2c_1^2\min\{m,d^2\}),$
	the inequality holds:
	  $$\sup_{v,w\in \mathbb{S}^{d-1}}\left|\frac{1}{m}\sum_{i=1}^m |\langle a_i,v\rangle\langle a_i,w\rangle|-\mathbb{E}_a[|\langle a,v \rangle \langle a,w\rangle|]\right|\leq c_1^3c_3\left( \sqrt{\frac{d}{m}}+\frac{d}{m}\right).$$	\end{thm}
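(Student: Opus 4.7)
The plan is to combine a Bernstein-type pointwise tail bound with an $\epsilon$-net discretization of $\mathbb{S}^{d-1}\times\mathbb{S}^{d-1}$, handling the continuity of the empirical process via standard concentration of the sample covariance matrix.

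\textbf{Step 1 (pointwise concentration).} Fix $v,w\in\mathbb{S}^{d-1}$ and set $Z_i:=|\dotp{a_i,v}\dotp{a_i,w}|$. Under Assumption~\ref{ass:subgauss}, each $\dotp{a_i,v}$ is $\sigma^2$-sub-Gaussian, so by Cauchy--Schwarz on the Orlicz $\psi_2$ norms, the product $Z_i$ is sub-exponential with parameter of order $\sigma^2$. Bernstein's inequality then gives, for every $t>0$,
\[
\Prob\!\left(\left|\tfrac{1}{m}\sum_{i=1}^m Z_i - \Expect Z_1\right|\geq t\right)\;\leq\;2\exp\!\bigl(-c\,m\min\{t^2/\sigma^4,\,t/\sigma^2\}\bigr),
\]
for an absolute constant $c>0$.

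\textbf{Step 2 (discretization).} Choose a $\delta$-net $\mathcal{N}\subset\mathbb{S}^{d-1}$ of cardinality $|\mathcal{N}|\leq(3/\delta)^d$. Union-bounding Step 1 over the $(3/\delta)^{2d}$ pairs in $\mathcal{N}\times\mathcal{N}$ and picking $t$ of the form $c_1\sigma^2\bigl(\sqrt{d/m}+d/m\bigr)$ yields, with probability at least $1-2\exp(-c_2 c_1^2\min\{m,d^2\})$,
\[
\max_{(v,w)\in\mathcal{N}\times\mathcal{N}}\left|\tfrac{1}{m}\sum_{i=1}^m Z_i(v,w)-\Expect Z_1(v,w)\right|\;\lesssim\;\sqrt{d/m}+d/m.
\]

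\textbf{Step 3 (continuity).} Using $\bigl||xy|-|x'y'|\bigr|\leq|x||y-y'|+|y'||x-x'|$ together with Cauchy--Schwarz, the empirical map $(v,w)\mapsto\frac{1}{m}\sum_i Z_i(v,w)$ is Lipschitz on $\mathbb{S}^{d-1}\times\mathbb{S}^{d-1}$ with constant bounded by $\bigl\|\frac{1}{m}\sum_i a_ia_i^T\bigr\|_{\rm op}$, while the population analogue has Lipschitz constant at most $1$ by isotropy and Jensen. A standard matrix concentration bound for isotropic sub-Gaussian samples (proved by the same net argument applied to the quadratic form) gives $\bigl\|\frac{1}{m}\sum_i a_ia_i^T-I_d\bigr\|_{\rm op}\lesssim\sqrt{d/m}+d/m$ with probability $\geq 1-2e^{-cd}$. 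Choosing $\delta$ a sufficiently small absolute constant then absorbs the off-net remainder into the overall rate.

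\textbf{Main obstacle.} The delicate part is threading the transition between the sub-Gaussian ($t^2$) and sub-exponential ($t$) regimes of Bernstein so that the final bound genuinely scales as $\sqrt{d/m}+d/m$: the $\sqrt{d/m}$ term comes from the quadratic side of the inequality (dominant when $m\gg d$), while the $d/m$ term comes from the linear side (dominant when $m\lesssim d$), and each must be calibrated to dominate the net penalty $2d\log(3/\delta)$. Bookkeeping to ensure that $c_1,c_2,c_3$ depend only on $\sigma$ (and not on $d,m$) is the remaining routine verification.
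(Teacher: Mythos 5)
The paper does not actually prove this statement: Theorem~\ref{thm:em} is presented as ``a special case of [Theorem 2.8]{eM},'' i.e.\ it is quoted from Eldar and Mendelson, whose argument is a generic-chaining bound for empirical processes indexed by products of sub-Gaussian classes. Your proposal attempts a from-scratch proof via pointwise Bernstein, a single $\delta$-net, and a Lipschitz transfer. Steps 1 and 2 are sound, but Step 3 contains a genuine gap.

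The issue is that the off-net remainder does not get absorbed. Writing $h(v,w)$ for the deviation between the empirical and population averages, the transfer inequality you are using reads
\[
\sup_{v,w\in\mathbb{S}^{d-1}}|h(v,w)|\ \leq\ \max_{\mathcal{N}\times\mathcal{N}}|h|\ +\ 2\delta\bigl(L_{\mathrm{emp}}+L_{\mathrm{pop}}\bigr),
\]
where, as you note, $L_{\mathrm{emp}}\leq\bigl\|\tfrac1m\sum_i a_ia_i^T\bigr\|_{\rm op}\approx 1$ and $L_{\mathrm{pop}}\leq 1$. The remainder is therefore $\approx 4\delta$, which does \emph{not} vanish when $\delta$ is a fixed absolute constant, whereas the target rate $\sqrt{d/m}+d/m$ does vanish as $m\to\infty$. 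The familiar absorption trick (as in the net proof of $\|\Sigma_m-I\|_{\rm op}\lesssim\sqrt{d/m}+d/m$) relies on the functional being a quadratic or multilinear form, so that its Lipschitz constant on the sphere is itself comparable to its sup-norm and the off-net term can be moved to the left-hand side. Here the absolute value inside the sum destroys that algebraic structure: $L_{\mathrm{emp}}+L_{\mathrm{pop}}\approx 2$ while $\sup|h|$ is small, so nothing can be re-absorbed. If you instead shrink $\delta$ to order $\sqrt{d/m}$ so that the off-net term is of the right size, the net cardinality becomes $(3/\delta)^{2d}$ and the union bound in Step 2 forces $t\gtrsim\sigma^2\sqrt{d\log(m/d)/m}$, which introduces a logarithmic factor not present in the theorem.

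Obtaining the sharp, log-free rate requires a multi-scale argument (Dudley's entropy integral or Talagrand's generic chaining), which is precisely the content of the Eldar--Mendelson theorem the paper invokes; a single-scale net plus pointwise Bernstein cannot reach it. As written, your Step~3 claim that a fixed small $\delta$ ``absorbs the off-net remainder'' is not justified and should be replaced by a chaining bound.
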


We can now establish Lipschitz behavior of $f_S$ on bounded sets.

\begin{cor}[Lipschitz constant on a ball]
Suppose that Assumptions~\ref{ass:subgauss} and \ref{ass:isotropy} hold. Then there exist constants $c_1,c_2,c_3$ depending only on $\sigma$ such that with probability at least
		  $$1-2\exp(-c_2c_1^2\min\{m,d^2\}),$$
we have 
\begin{equation}\label{eqn:lip}
|f_S(x)-f_S(y)|\leq \left(1+c_1^3c_3\left( \sqrt{\frac{d}{m}}+\frac{d}{m}\right)\right)\|x-y\|\|x+y\|\qquad \textrm{ for all }x,y\in \R^d,
\end{equation}
and consequently
\begin{equation}\label{eqn:subgrad_norm}
	\max_{\zeta\in \partial f_S(x)} \|\zeta\|  \leq 2\left(1+c_1^3c_3\cdot \left( \sqrt{\frac{d}{m}}+\frac{d}{m}\right)\right) \|x\|\qquad \textrm{ for all }x\in \R^d.
\end{equation}
\end{cor}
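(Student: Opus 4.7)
The plan is to combine the deterministic chain \eqref{eqn:main_ineq_lip} with the uniform concentration guarantee of Theorem~\ref{thm:em}, and then pass to subgradients via the identification of local Lipschitz constants with supremums of subgradient norms noted in Section~\ref{sec:notation}.

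Starting from \eqref{eqn:main_ineq_lip}, for any distinct $x,y\in\R^d$ and the unit vectors $v=(x-y)/\|x-y\|$ and $w=(x+y)/\|x+y\|$, it suffices to control the empirical average $\frac{1}{m}\sum_{i=1}^m|\langle a_i,v\rangle \langle a_i,w\rangle|$ uniformly in $v,w\in \mathbb{S}^{d-1}$. For the centering term, Cauchy-Schwarz combined with isotropy (Assumption~\ref{ass:isotropy}) yields
\[
\mathbb{E}_a[|\langle a,v\rangle \langle a,w\rangle|]\leq \sqrt{\mathbb{E}[\langle a,v\rangle^2]\cdot \mathbb{E}[\langle a,w\rangle^2]}=1.
\]
Applying Theorem~\ref{thm:em}, on the event of probability at least $1-2\exp(-c_2 c_1^2\min\{m,d^2\})$ one has
\[
\frac{1}{m}\sum_{i=1}^m|\langle a_i,v\rangle \langle a_i,w\rangle|\leq 1+c_1^3c_3\left(\sqrt{d/m}+d/m\right)
\]
uniformly over $v,w\in \mathbb{S}^{d-1}$. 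Plugging back into \eqref{eqn:main_ineq_lip} gives \eqref{eqn:lip}.

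To obtain \eqref{eqn:subgrad_norm}, fix $x\in \R^d$ and $\zeta\in \partial f_S(x)$, and pick any $\varepsilon>0$. For $y,z\in B(x,\varepsilon)$ the triangle inequality gives $\|y+z\|\leq 2\|x\|+2\varepsilon$, so \eqref{eqn:lip} implies that $f_S$ is Lipschitz on $B(x,\varepsilon)$ with constant at most $(1+c_1^3c_3(\sqrt{d/m}+d/m))(2\|x\|+2\varepsilon)$. By the remark at the end of Section~\ref{sec:notation}, this constant dominates $\|\zeta\|$; letting $\varepsilon\downarrow 0$ yields \eqref{eqn:subgrad_norm}.

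Overall the corollary is a direct unpacking of Theorem~\ref{thm:em}: the only subtle points are that isotropy forces the relevant mean to be at most $1$ (rather than merely some $\sigma$-dependent constant), and that the pseudo-Lipschitz estimate \eqref{eqn:lip} upgrades to the pointwise subgradient bound via the shrinking-ball argument above. I do not foresee any genuine obstacle.
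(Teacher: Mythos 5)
Your proposal is correct and follows essentially the same route as the paper: invoke \eqref{eqn:main_ineq_lip}, control the empirical average uniformly via Theorem~\ref{thm:em}, bound the mean by $1$ using Cauchy--Schwarz and isotropy, and then pass from the global pseudo-Lipschitz estimate to the subgradient norm bound via the Lipschitz-constant/subdifferential identification. Your shrinking-ball argument is just a slightly more explicit rendering of the $\ls_{x,y\to z}$ computation the paper uses for \eqref{eqn:subgrad_norm}.
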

\begin{proof}
	Combining inequalities \eqref{eqn:main_ineq_lip} with Theorem~\ref{thm:em}, we deduce that there exist constants $c_1,c_2,c_3$ depending only on $\sigma$ such that with probability 
		  $$1-2\exp(-c_2c_1^2\min\{m,d^2\}),$$ all points $x,y\in \R^d$ satisfy
$$|f_S(x)-f_S(y)|\leq \left(\mathbb{E}_a[|\langle a,v \rangle \langle a,w\rangle|]+c_1^3c_3\left( \sqrt{\frac{d}{m}}+\frac{d}{m}\right)\right)\|x-y\|\|x+y\|,$$
where we set $v:=\frac{x-y}{\|x-y\|}$ and $w:=\frac{x+y}{\|x+y\|}$.
	Isotropy, in turn, implies 		
	  \begin{align*}
		  \mathbb{E}_a[|\langle a,v\rangle \langle a,w\rangle|]&\leq \sqrt{\mathbb{E}_a[|\langle a,v \rangle|^2]}\cdot\sqrt{\mathbb{E}_a[|\langle a,w \rangle|^2]}= 1,
		  \end{align*}
Equation~\eqref{eqn:lip} follows immediately. Consequently, notice
$$\ls_{x,y\to z}~\frac{|f_S(x)-f_S(y)|}{\|x-y\|}\leq 2\left(1+c_1^3c_3\left( \sqrt{\frac{d}{m}}+\frac{d}{m}\right)\right)\|z\|.$$
Since the Lipschitz constant of $f_S$ at $x$ coincides with the value $\max_{\zeta\in \partial f(x)}\|\zeta\|$ (see e.g. \cite[Theorem 9.13]{RW98}), the estimate \eqref{eqn:subgrad_norm} follows.
%		  To prove \eqref{eqn:subgrad_norm}, fix $x\in \R^d$  and choose any $\zeta \in \partial f_S(x)$.  For all $t \geq 0$, define $y := y(t) = x - t\zeta$. By $2\sigma^2$-weak convexity, we have 
%$$		  -\frac{2\sigma^2}{2}\|x - y(t)\| + \dotp{\zeta, x - y(t)} \leq f(y(t)) - f(x) \leq\left(1+c_1^3c_3\cdot \left( \sqrt{\frac{d}{m}}+\frac{d}{m}\right)\right) \|y(t) - x\|\|y(t) + x\|
%$$	
%i.e., 
%$$
%-\frac{2\sigma^2t^2}{2}\|\zeta\| +t\|\zeta\|^2 \leq \left(1+c_1^3c_3\cdot \left( \sqrt{\frac{d}{m}}+\frac{d}{m}\right)\right) t\|\zeta\|\|2x + t\zeta\|.
%$$
%To get the result, divide by $t$ and let $t \rightarrow 0$.
\end{proof}

We now have all the ingredients in place to apply 
Theorem~\ref{thm:qlinear} to the robust phase retrieval objective. Namely, under Assumptions~\ref{ass:weird_prob}, \ref{ass:subgauss}, and \ref{ass:isotropy}, we may set\footnote{The definition of $L_g$ uses that the norm of any point in the tube $\mathcal{T}=B(\bar x, \frac{\gamma\mu}{\rho})\cup B(-\bar x, \frac{\gamma\mu}{\rho})$ is clearly upper bounded by $\|\bar x\|+\frac{\mu}{2\rho}$.}
\begin{align}\label{eq:phase_constants}
\rho := 4\sigma^2; && \mu := \tfrac{1}{2}\kappa_{\mathrm{st}}^\ast p_0\|\bar x\|; && L_g := 2\left(1+c_1^3c_3\cdot \left( \sqrt{\frac{d}{m}}+\frac{d}{m}\right)\right) \left(1 +  \frac{\kappa_{\mathrm{st}}^\ast p_0}{16\sigma^2}\right)\|\bar x \|.
\end{align}

%In particular, for the Gaussian distribution, all $\rho$ and $\kappa$ are dimension independent.
%where the bound for $L_g$ follows by taking the supremum of $\|x\|$ over the ball $B$ in Theorem~\ref{thm:qlinear}. 
%These constants can be quite mild. Indeed, supposing that $a_i \sim \mathsf{N}(0, I_d)$ is a standard Gaussian vector, Duchi and Ruan~\cite[]{DuchiRuan} show that 
%\begin{align*}
%\rho := \frac{4e^2}{e^2 - 1} \approx 9.252 && \text{ and } && \kappa := \frac{0.365}{4},
%\end{align*}
%which are independent of $d$. 
Thus, we have proved the following convergence guarantee -- the main result of this section. To simplify the formulas, we apply Theorem~\ref{thm:qlinear} only with $\gamma:=1/2$.

\begin{cor}[Linear convergence for phase retrieval]
Suppose	that Assumptions~\ref{ass:weird_prob}, \ref{ass:subgauss}, and \ref{ass:isotropy} hold. 
%and \ref{ass:isotropy} hold. 
Then there exists a numerical constant $c<\infty$ 
 such that the following is true. Whenever we are in the regime,
 $\frac{c}{p_0^2}\leq \frac{m}{d}\leq d$, and we initialize  Algorithm~\ref{alg:polyak} at  $x_0$ satisfying
\begin{equation}\label{eqn:init_ball2}
\min\left\{\frac{\|x_0-\bar x\|}{\|\bar x\|},\frac{\|x_0+\bar x\|}{\|\bar x\|}\right\}\leq \frac{\kappa_{\mathrm{st}}^\ast p_0}{16\sigma^2},
\end{equation}	
%Assume the setting of Lemma~\ref{lem:duchi_lem}. Let $c \in (0, 1)$, let $g:= f_S$, and choose constants $\rho$ and $\kappa$ as in~\eqref{eq:phase_constants}. 
 we can be sure with  probability at least 
$$1-6\exp\left(-m\cdot \min\left\{\tfrac{p_0^2}{32},c^{-1},\tilde{c}\right\}\right)$$ 
% \footnote{The probability is at least $1-6\exp\left(-m\cdot \min\{\frac{p_0^2}{32},\frac{1}{c},\tilde{c},\frac{\tilde{c}\cdot d^2}{m}\}\}\right)$, where $\tilde{c}$ is a constant depending only on $\sigma$.} 
that the produced iterates $\{x_k\}$ converge  to $\{\pm\bar x\}$ at the linear rate:
%\begin{align}
%\dist^2(x_{k+1},\bar x) \leq \left(1  - \frac{\kappa^3(\rho + \kappa/2)}{2(\rho+\kappa)^2 \left(1+\hat c\cdot \left( \sqrt{\frac{p_0^2}{c}}+\frac{p_0^2}{c}\right)\right)^2 \left(1 +  \frac{2\kappa}{\rho + \kappa}\right)^2}\right)\dist^2(x_{k},\bar x).
%%\left(1  - \frac{\rho\kappa\|x_k + \bar x \|}{\|\zeta_k\|^2}\left( \frac{\kappa}{\rho}\|x_k + \bar x\| - \|x_k - \bar x\| \right)\right)
%\end{align}
\begin{align}
\dist^2(x_{k+1};\bar x) \leq \left(1  - \left(\frac{p_0 \kappa_{\mathrm{st}}^\ast}{\sqrt{32}\left(1+\hat c\cdot \left( \sqrt{\frac{p_0^2}{c}}+\frac{p_0^2}{c}\right)\right) \left(1 +  \frac{\kappa_{\mathrm{st}}^\ast p_0}{16\sigma^2}\right)}\right)^2\right)\dist^2(x_{k};\bar x).
%\left(1  - \frac{\rho\kappa\|x_k + \bar x \|}{\|\zeta_k\|^2}\left( \frac{\kappa}{\rho}\|x_k + \bar x\| - \|x_k - \bar x\| \right)\right)
\end{align}
Here,  $\tilde{c}$ and $\hat c$ are constants that depend only on $\sigma$. In particular, aside from numerical constants, the linear rate depends only on $\kappa_{\mathrm{st}}^\ast$, $p_0$, and $\sigma$.
\end{cor}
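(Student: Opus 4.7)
The plan is to combine the three probabilistic ingredients already established in this subsection -- sharpness (Theorem~\ref{thm:sharp_duchi}), weak convexity (Theorem~\ref{lem:weak_conv}), and the uniform subgradient bound from the preceding corollary -- via a union bound, and to feed the resulting constants from \eqref{eq:phase_constants} into the deterministic linear-rate result Theorem~\ref{thm:qlinear} specialized to $\gamma=\tfrac12$.

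First, I would define three ``good'' events: $\mathcal{E}_1$ on which sharpness holds with $\mu=\tfrac12\kappa_{\mathrm{st}}^{\ast}p_0\|\bar x\|$, $\mathcal{E}_2$ on which $f_S$ is $4\sigma^2$-weakly convex, and $\mathcal{E}_3$ on which the uniform bound \eqref{eqn:subgrad_norm} holds. Their complements have probability at most $2\exp(-mp_0^2/32)$, $\exp(-m/c)$, and $2\exp(-c_2c_1^2\min\{m,d^2\})$ respectively. The regime hypothesis $m/d\leq d$ forces $\min\{m,d^2\}=m$, so each exponent is linear in $m$. Absorbing $c_2c_1^2$ into a single $\sigma$-dependent constant $\tilde c$, enlarging $c$ if necessary so that the weak-convexity constant is the same $c$ as in the statement, and taking a union bound then yields a failure probability at most $5\exp\!\bigl(-m\cdot\min\{p_0^2/32,\,c^{-1},\,\tilde c\}\bigr)\leq 6\exp\!\bigl(-m\cdot\min\{p_0^2/32,\,c^{-1},\,\tilde c\}\bigr)$, matching the statement.

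Next, on $\mathcal{E}_1\cap\mathcal{E}_2\cap\mathcal{E}_3$, I would apply Theorem~\ref{thm:qlinear} with $g:=f_S$, $\mathcal{X}=\{\pm\bar x\}$, and $\gamma=\tfrac12$. The corresponding tube radius is
$$\tfrac{\gamma\mu}{\rho}=\tfrac{\kappa_{\mathrm{st}}^{\ast}p_0\|\bar x\|}{16\sigma^2},$$
which is exactly the radius appearing in the initialization condition \eqref{eqn:init_ball2}, so $x_0\in\mathcal{T}$. Every $x\in\mathcal{T}$ satisfies $\|x\|\leq(1+\tfrac{\kappa_{\mathrm{st}}^{\ast}p_0}{16\sigma^2})\|\bar x\|$, and the subgradient bound \eqref{eqn:subgrad_norm} then yields precisely the $L_g$ listed in \eqref{eq:phase_constants}. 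Theorem~\ref{thm:qlinear} consequently delivers
$$\dist^2(x_{k+1};\mathcal{X})\leq\Bigl(1-\tfrac{\mu^2}{2L_g^2}\Bigr)\dist^2(x_{k};\mathcal{X}).$$

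Finally, to match the displayed rate in the statement, I would substitute the values of $\mu$ and $L_g$ and use the lower-bound hypothesis $m/d\geq c/p_0^2$, which gives $d/m\leq p_0^2/c$; replacing $\sqrt{d/m}$ by $\sqrt{p_0^2/c}$ and $d/m$ by $p_0^2/c$ inside the $L_g$ factor and absorbing $c_1^3c_3$ into a single $\sigma$-dependent constant $\hat c$ produces the displayed expression. There is no conceptual obstacle here: the deterministic core of the argument is entirely contained in Theorem~\ref{thm:qlinear}, and the only care needed is bookkeeping -- tracking which constants are numerical versus $\sigma$-dependent, recognizing that $\dist(\cdot;\mathcal{X})$ coincides with the $\dist(\cdot;\bar x)=\min\{\|\cdot-\bar x\|,\|\cdot+\bar x\|\}$ notation used in the conclusion, and enlarging the single constant $c$ so that it simultaneously serves as the sharpness threshold, the weak-convexity threshold, and the bound on $m/d$ in the lower-regime condition.
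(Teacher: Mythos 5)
Your proposal is correct and follows exactly the route the paper intends: define the three high-probability events from Theorem~\ref{thm:sharp_duchi}, Theorem~\ref{lem:weak_conv}, and the Lipschitz corollary, union-bound (noting $m/d\leq d$ forces $\min\{m,d^2\}=m$), check that the tube radius $\gamma\mu/\rho=\kappa_{\mathrm{st}}^\ast p_0\|\bar x\|/(16\sigma^2)$ matches the initialization condition, and plug the constants from \eqref{eq:phase_constants} into Theorem~\ref{thm:qlinear} with $\gamma=1/2$, finishing with $d/m\leq p_0^2/c$ to eliminate $d/m$ from $L_g$. The bookkeeping (including the observation that the raw union bound gives a factor of $5$, safely dominated by the stated $6$) is all accurate.
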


%
%A similar subgradient method, dubbed randomized Kaczmarz, was recently introduced in~\cite{?} and analyzed in~\cite{vershynin}. Under the assumption that each $a_i$ has unit norm, the Kaczmarz method generates a sequence of vectors $\{x_k\}_{k \in \NN}$ by randomly sampling, at each iteration, an index $i_k \in [m]$, and updating as follows:
%$$
%x_{k+1} := x_k - \sign(\dotp{a_{i_k}, x_k})\left(| \dotp{a_{i_k}, x_k}| - b_{i_k}\right)a_{i_k}.
%$$
%A simple calculation shows 
%$$
%\sign(\dotp{a_{i_k}, x_k})\left(| \dotp{a_{i_k}, x_k}| -b_{i_k}\right)a_{i_k} \in h(x_k) \cdot \partial_C h(x_k),
%$$
%where $\partial_C h(x_k)$ denotes the Clarke subdifferential of the function $h (x) := ||\dotp{a_{i_k}, x}| - b_{i_k}|$ at the point $x_k$.  Under proper initialization, the elegant paper~\cite{vershynin} proves linear convergence of the Kaczmarz method, but relies on somewhat complex tools from probability theory. In contrast, we disentangle the probabilistic statements guaranteeing Properties~\ref{assump:g_convex},~\ref{assump:g_sharp}, and~\ref{assump:g_minimizers} hold, from the deterministic convergence of Algorithm~\ref{alg:polyak}. Our proof of convergence is simple, but we rely on previous work for the regularity properties.

Thus under typical statistical assumptions, the subgradient method converges linearly to $\{\pm\bar x\}$, as long as one can initialize the method at a point $x_0$  satisfying the relative error condition $\|x_0\pm\bar x\|\leq  R{\|\bar x\|}$, where $R$ is a constant. A number of authors have proposed initialization strategies that can achieve this guarantee using only a constant multiple of $d$ measurements \cite{versh_kac,eldar_init,duchi_ruan_PR,wirt_flow,whitflow2}. For completeness, we record the strategy that was proposed in \cite{eldar_init}, and rigorously justified in \cite{duchi_ruan_PR}.
To simplify the exposition, we only state the guarantees of the initialization under Gaussian assumptions on the measurement vectors $a_i$.

\begin{thm}[{\cite[Equation~(15)]{duchi_ruan_PR}}]\label{thm:init}
Assume that $a_i \sim \mathsf{N}(0, I_d)$ are i.i.d. standard Gaussian. Define the value $\hat r^2 := \frac{1}{m}\sum_{i=1}^m b_i$ and the index set $\mathcal{I}_{\mathrm{sel}} := \{i \in [m] \mid b_i \leq \frac{1}{2} \hat r^2\} $. Set  
\begin{align*}
X^{\mathrm{init}} := \sum_{i \in \mathcal{I}_{\mathrm{sel}}} a_ia_i^T \qquad \textrm{and}\qquad \hat w := \argmin_{w \in \mathbb{S}^{d-1}} \; w^T \mathrm{X}^{\mathrm{init}} w.
\end{align*}
Then as soon as $\frac{m}{d} \gtrsim \varepsilon^{-2}$ the point $x_0 = \hat r \hat w$ satisfies
\begin{align*}
\min\left\{\frac{\|x_0-\bar x\|}{\|\bar x\|},\frac{\|x_0+\bar x\|}{\|\bar x\|}\right\} \lesssim \varepsilon \log\frac{1}{\varepsilon}
\end{align*}
with probability at least $\geq 1 - 5 \exp(-c m \varepsilon^2)$, where $c$ is a numerical constant.
\end{thm}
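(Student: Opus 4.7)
The plan is to exploit the spectral design of the initialization: the \emph{population} version of $X^{\mathrm{init}}$ has its unique smallest eigenvector exactly along $u := \bar x/\|\bar x\|$, with a constant spectral gap, so everything reduces to concentrating the empirical matrix around its population counterpart and applying Davis--Kahan. Two preliminary facts are needed. First, since $b_i$ is distributed as $\|\bar x\|^2\chi^2_1$, standard subexponential tail bounds give $|\hat r^2 - \|\bar x\|^2| \lesssim \varepsilon \|\bar x\|^2$ (and hence $|\hat r - \|\bar x\|| \lesssim \varepsilon\|\bar x\|$) with probability $1 - 2\exp(-cm\varepsilon^2)$. Second, with the deterministic threshold $t_0 := \tfrac{1}{2}\|\bar x\|^2$, rotational invariance of the Gaussian forces
$$M \;:=\; \mathbb{E}\bigl[a a^T \mathbf{1}\{(a^T\bar x)^2 \leq t_0\}\bigr] \;=\; \alpha\, uu^T + \beta\,(I - uu^T),$$
where $\alpha := \mathbb{E}[\xi^2 \mathbf{1}\{\xi^2 \leq 1/2\}]$ and $\beta := \mathbb{P}(\xi^2 \leq 1/2)$ for $\xi \sim \mathsf{N}(0,1)$. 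A direct calculation gives $\alpha < \beta/2 < \beta$, so $\pm u$ are the unit smallest eigenvectors of $M$ and the spectral gap is an absolute constant.

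The next step is to show that the empirical matrix $\widehat M := \tfrac{1}{m}\sum_i a_i a_i^T \mathbf{1}\{b_i \leq t_0\}$ is close to $M$ in operator norm. The summands are i.i.d.\ but only sub-exponential, so I would truncate at level $\tau \asymp \sqrt{d\log(1/\varepsilon)}$ and apply the matrix Bernstein inequality, together with a union bound for the rare event that some $\|a_i\|$ exceeds $\tau$. This yields $\|\widehat M - M\|_{\mathrm{op}} \lesssim \varepsilon$ with probability $\geq 1 - 5\exp(-cm\varepsilon^2)$ once $m/d \gtrsim \varepsilon^{-2}$; the truncation is what produces the $\log(1/\varepsilon)$ factor downstream. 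To pass from $t_0$ to the data-dependent threshold $\tfrac{1}{2}\hat r^2$, observe that on the high-probability event $|\hat r^2 - \|\bar x\|^2|\lesssim \varepsilon\|\bar x\|^2$ only $O(\varepsilon m)$ indices can change membership in $\mathcal{I}_{\mathrm{sel}}$ (by the bounded density of $\chi^2_1$ near $1/2$), and each contributes a term of sub-Gaussian operator norm, so the additional perturbation is $O(\varepsilon\log(1/\varepsilon))$. Davis--Kahan's $\sin\theta$ theorem (using the constant spectral gap of $M$) then converts these operator-norm bounds into $\min\{\|\hat w - u\|,\|\hat w + u\|\} \lesssim \varepsilon \log(1/\varepsilon)$. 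Combining with $\hat r \approx \|\bar x\|$ yields the claimed relative error for $x_0 = \hat r \hat w$.

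The main technical obstacle is the data-dependence of the selection threshold $\hat r^2$, which couples the indicators $\mathbf{1}\{b_i \leq \tfrac{1}{2}\hat r^2\}$ across indices and blocks a direct application of independent-sum concentration. The ``freeze the threshold'' device above handles this, but requires two anti-concentration-style estimates: a uniform bound on the $\chi^2_1$ density in a neighborhood of $1/2$, and uniform tail control on the summands to bound the contribution of any flipped index. A secondary, routine difficulty is the tail-control step in matrix Bernstein; truncating the summands yields the logarithmic factor in the final bound, and sharpening to pure $\varepsilon$ would require more delicate non-truncating matrix concentration, which is not needed here.
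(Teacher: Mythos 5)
Note first that the paper does not prove this theorem: it is stated as a citation of Duchi--Ruan [Equation (15)], and the surrounding text explicitly defers to that reference for the argument, so there is no internal proof against which to compare your attempt. Your sketch follows the standard spectral-initialization template (population matrix via rotational invariance, operator-norm concentration, Davis--Kahan), and your computation of $\alpha$, $\beta$, and the resulting constant spectral gap is correct.

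There is, however, a genuine gap in your handling of the data-dependent threshold. You claim that because only $O(\varepsilon m)$ indices flip, and ``each contributes a term of sub-Gaussian operator norm,'' the resulting perturbation is $O(\varepsilon \log(1/\varepsilon))$. But each flipped rank-one summand contributes $\|a_i\|^2/m \approx d/m$ to the operator norm, so a naive sum over $O(\varepsilon m)$ flips gives $O(\varepsilon d)$, which is far too large. Worse, the set of flipped indices is itself random (it depends on $\hat r^2$), so you cannot directly apply independent-sum matrix concentration to this random sub-sum. The standard fix is to sandwich the flip-error matrix in the PSD order between two sums with \emph{deterministic} selector intervals $[t_0(1-\varepsilon'), t_0]$ and $[t_0, t_0(1+\varepsilon')]$, each of whose population means has operator norm $O(\varepsilon)$ by the density bound near $t_0$; concentration of those two deterministic-threshold sums then bounds the random one, since a symmetric matrix that is PSD-sandwiched between two matrices of small operator norm is itself small. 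Separately, your attribution of the $\log(1/\varepsilon)$ factor to truncation in matrix Bernstein is plausible but not established; whether the truncation level $\tau \asymp \sqrt{d\log(1/\varepsilon)}$ combined with the union bound actually reproduces the cited rate $\varepsilon\log(1/\varepsilon)$ is left unverified in your sketch.
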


For more details and intuition underlying the initialization procedure, see~\cite[Section 3.3]{duchi_ruan_PR}. 

%Briefly, it is clear that $\hat r$ is a good estimator for $\|\bar x\|$ because $a_i$ is isotropic. Slightly more subtle is that $\hat d$ is a good estimator of an element of $\{\pm \bar x /\|\bar x\|\}$. This follows because those $a_i$ with $i \in \mathcal{I}_{\mathrm{sel}}$, are the most orthogonal to $\bar x$. Thus, at least one element of $\{\pm \bar x/\|\bar x\|\}$ should be close to a minimal eigenvector of $X^{\mathrm{init}}$.

		\section{Numerical Illustration}\label{sec:numerical}	
In this section, as a proof of concept, we apply the subgradient method to medium and large-scale phase retrieval problems. All of our experiments were performed on a standard desktop:  Intel(R) Core(TM)  i7-4770 CPU\@3.40 GHz with 8.00 GB RAM.

We begin with simulated data. Set $d=5000$. We generated a standard Gaussian random matrix $A\in \R^{m\times d}$ for each  value $m\in \{11,000,12225, 13500, 14750, 16000, 17250,18500\}$; afterwards, we generated a Gaussian vector $\bar x\sim\mathsf{N}(0,I_d)$ and  set $b=(A\bar x)^2$. We then applied the initialization procedure, detailed in Theorem~\ref{thm:init}, followed by the subgradient method. Figure~\ref{fig:sim_data} plots the progress of the iterates produced by the subgradient method in each of the seven experiments. The top curve corresponds to $m=11,000$, the bottom curve corresponds to $m=18500$, while the curves for the other values of $m$ interpolate in between. The iterates corresponding to $m=11,000$ stagnate; evidently the number of measurements is too small. Indeed, the iterates do not even converge to a stationary point of the problem; this is in contrast to the prox-linear method in \cite{duchi_ruan_PR}. The iterates for the rest of the experiments converge to the true signal $\pm\bar x$ at an impressive linear rate.

In out second experiment, we use digit images from the MNIST data set \cite{MNIST}; these are relatively small so that the measurement matrices can be stored in memory.  We illustrate the generic behavior of the algorithm on digit seven in Figure~\ref{fig:mnist}. The dimensions of the image we use are $32\times 32$ (with 3 RGB channels).
Hence, after vectorizing the dimension of the variable is $d=3072$, while the number of Gaussian  measurements is $m=3d=9216$.
The initialization produced appears to be reasonable; the digit is visually discernible. The true image and the final image produced by the method are essentially identical. The convergence plot appears in Figure~\ref{fig:7_error}. %A steady linear rate is evident.

\begin{figure}[!h]
	\centering
	\begin{minipage}[b]{0.32\textwidth}
		\includegraphics[width=\textwidth]{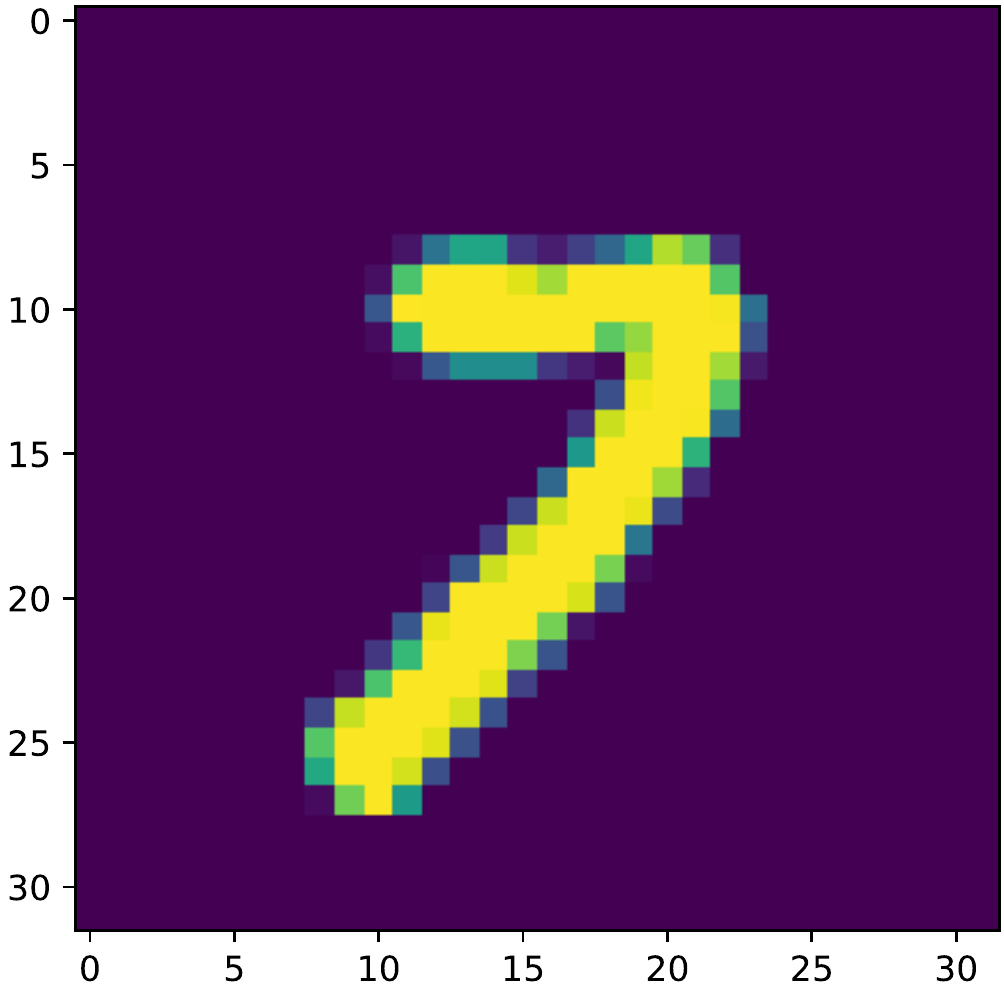}
		%		\caption{True.}
	\end{minipage}
	\begin{minipage}[b]{0.32\textwidth}
		\includegraphics[width=\textwidth]{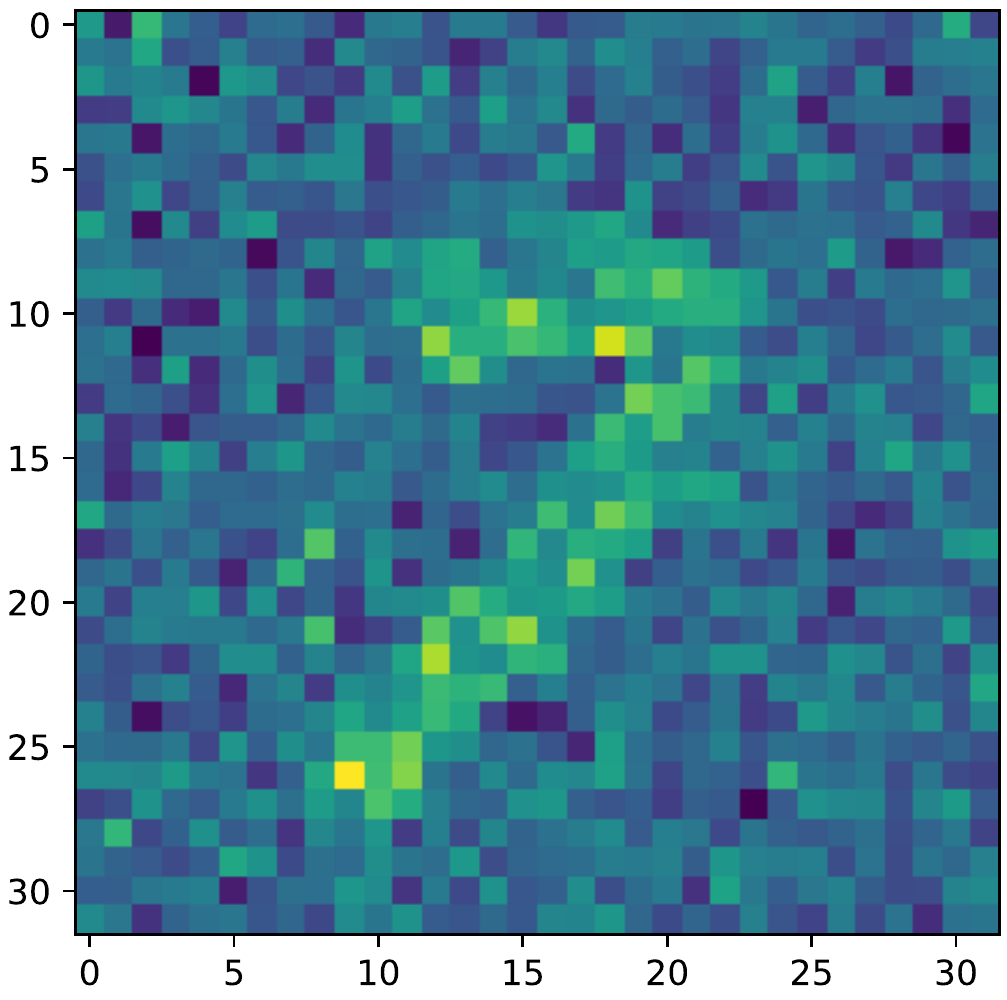}
		%		\caption{Initial.}
	\end{minipage}
	\begin{minipage}[b]{0.32\textwidth}
		\includegraphics[width=\textwidth]{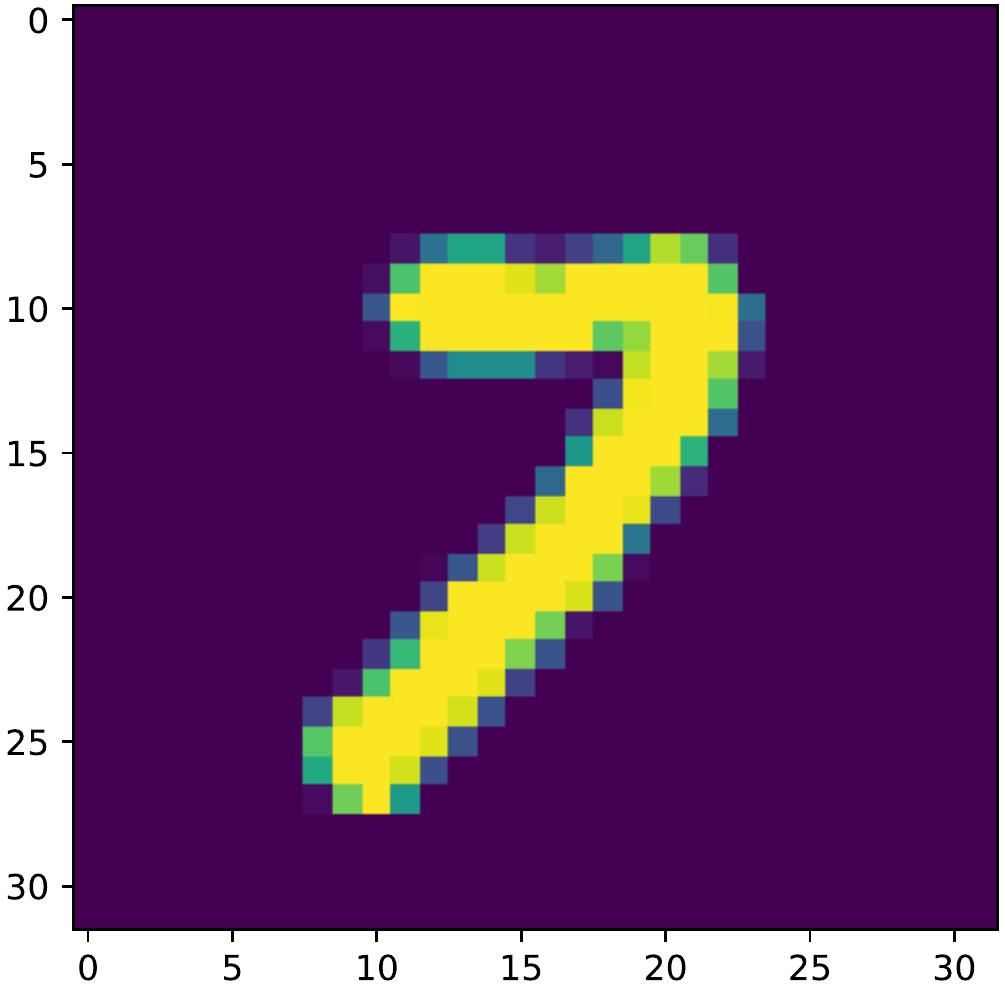}
		%	\caption{Final.}
	\end{minipage}
	\caption{Digit recovery; left is the true digit, middle is the initial, right is the digit produced by the subgradient method. Dimension of the problem: $(n,d,m)=(32,3072,9216)$.}
	\label{fig:mnist}
\end{figure}

\begin{figure}[!h]
	\centering
	\includegraphics[scale=0.7]{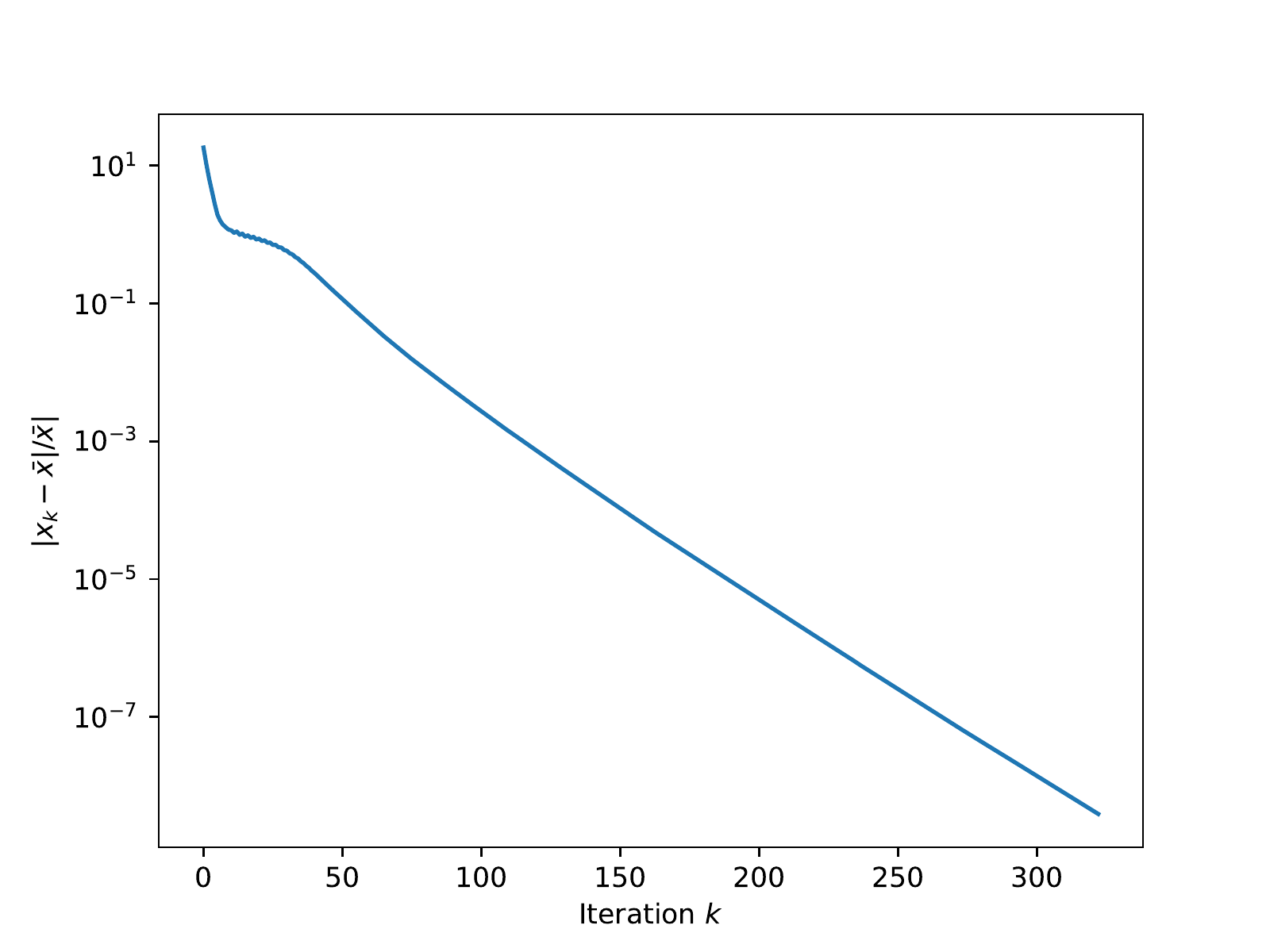}
	\caption{Convergence plot on MNIST digit (iterates vs. $\|x_k-\bar x\|/\|\bar x\|$).}
	\label{fig:7_error}
\end{figure}

%\subsection{Simulated data}
	\begin{figure}[!h]
		\centering
		\includegraphics[scale=0.7]{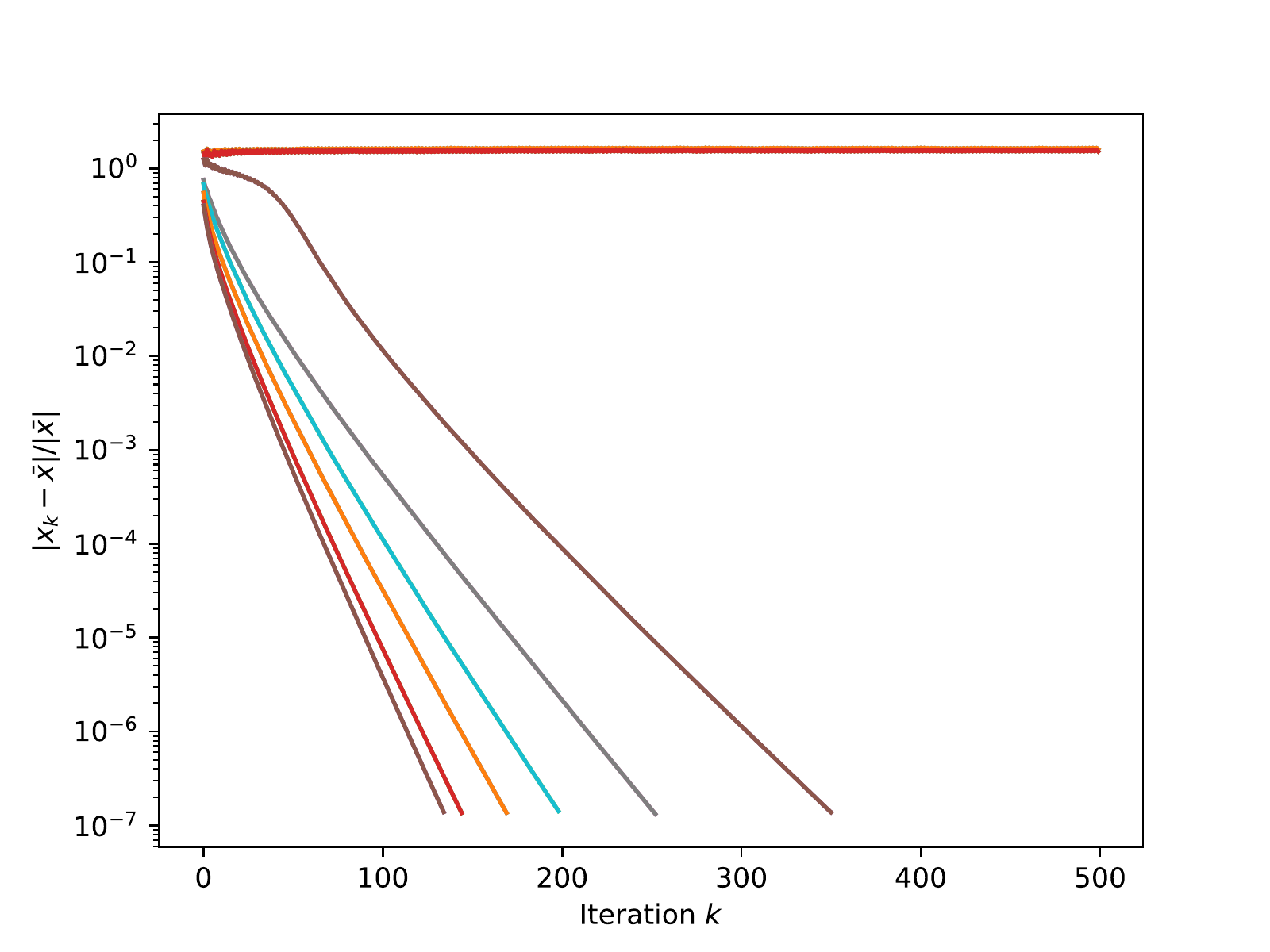}
		\caption{Convergence plot for the experiment on simulated data (iteration vs. $\|x_k-\bar x\|/\|\bar x\|$).}\label{fig:sim_data}
	\end{figure}

%\subsection{Real images}
We next apply the subgradient method for recovering large-scale real images. To allow an easy comparison with previous work, we generate the data 
 using the same process as in \cite[Section 6.3]{duchi_ruan_PR}. We first describe how we generate the operator $A$. To this end, let $H\in \{-1,1\}^{l\times l}/\sqrt{l}$ be a symmetric normalized Hadamard matrix. Consequently $H$  satisfies the equation $H^2=I_l$. Note that by the virtue of being Hadamard, matrix vector multiplication $Hv$ requires time $l\log(l)$.
 For some integer $k$, we then generate $k$ i.i.d. diagonal sign matrices $S_1,\ldots, S_k\in \diag(\{-1,1\}^l)$ uniformly at random, and define $A=\begin{bmatrix} H S_1&H S_2&\ldots& H S_k \end{bmatrix}^T\in \R^{kl\times l}$.
 
  We work with square colored images, represented as an array $\overline{X}\in \R^{n\times n\times 3}$. The number $3$ appears because colored images have $3$ RGB channels. We then stretch the matrix 
   $\overline{X}$ into a $3n^2$-dimensional vector $\bar x$ and set the measurements $b_i:=(A(i,\cdot)\bar x)^2$, where $A(i,\cdot)$ denotes the $i$'th row of $A$. Thus if the image is $n\times n$, the number of variables in the problem formulation is $d:=3n^2$ and the number of measurements is $m:=kd=3kn^2$. We  use the initialization procedure proposed in Theorem~\ref{thm:init}, with a standard power method (with a shift) to find the minimal eigenvalue of $X^{\textrm{init}}$. We complete the experiment by running the subgradient method (Algorithm~\ref{alg:polyak}), which requires no parameter tunning.

%We draw $m$ measurement vectors $a_i\in \R^d$ independently from a normal distribution $\mathsf{N}(0,I_{d})$. 

We  perform a large scale experiment on two pictures taken by the Hubble telescope. Figure~\ref{fig:hubbl_rec_image} describes the results of the experiment, while Figure~\ref{fig:error_hubble} plots the iterate progress.
The image on the left is $1024\times 1024$ and we use $k=3$ Hadamard matrices. Hence the dimensions of the problem are $d\approx 2^{22}$ and $m=3d\approx 2^{24}$. The image on the right is $2048\times 2048$ and we use $k=3$ Hadamard matrices. Hence the dimensions of the problem are $d\approx 2^{24}$ and $m=3d\approx2^{25}$. 
 For the image on the left, the entire experiment, including initialization and the subgradient method completed in 3 min. For the image on the right, it completed in $25.6$ min. The vast majority of time was taken up by the initialization. Thus a more careful implementation and/or tunning of the initialization procedure could speed up the experiment. 

		\begin{figure}[!h]
	\centering
		\begin{minipage}[b]{0.49\textwidth}
		\includegraphics[width=\textwidth]{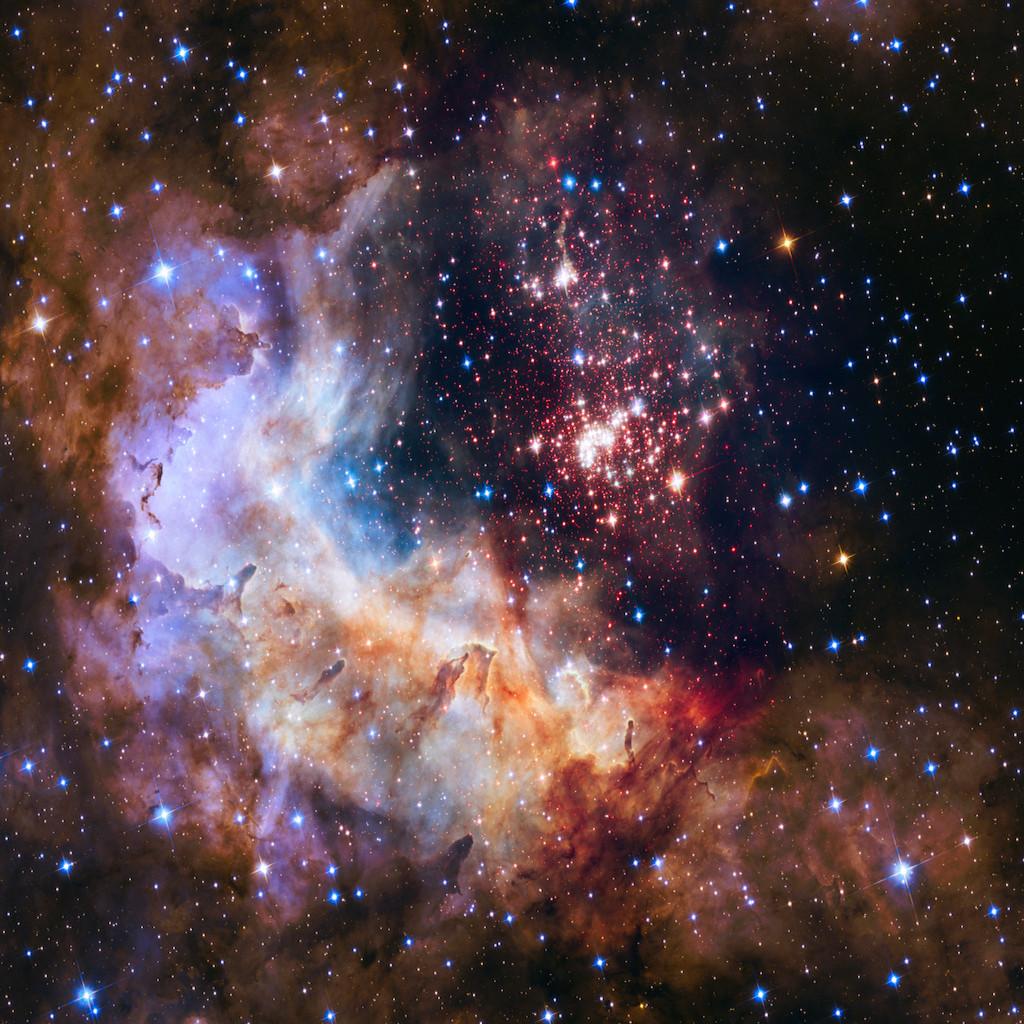}
	%	\caption{True.}
		\end{minipage}~
		\begin{minipage}[b]{0.49\textwidth}
		\includegraphics[width=\textwidth]{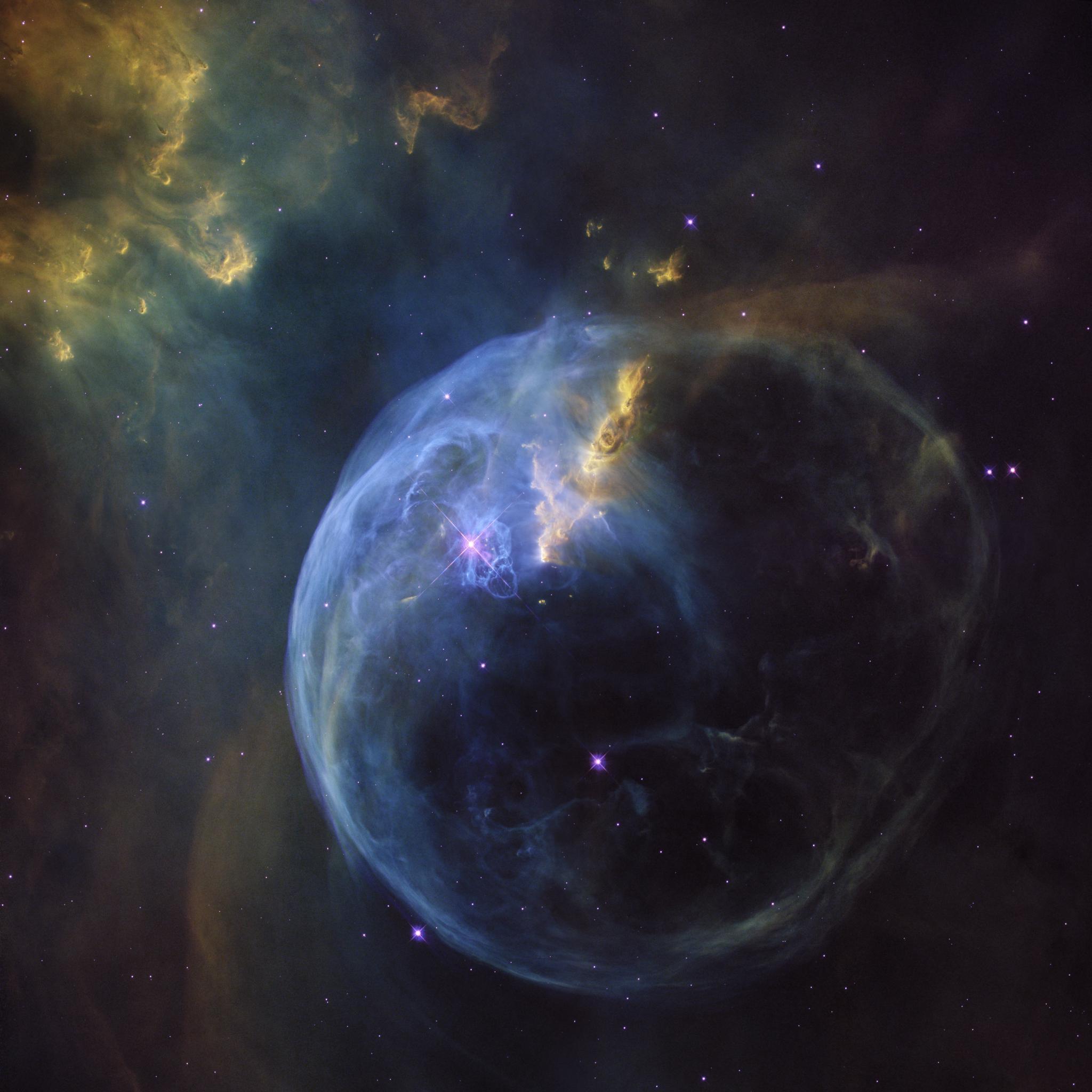}
	%	\caption{True.}
		\end{minipage}
		\\ \vspace{3pt}
	\begin{minipage}[b]{0.49\textwidth}
		\includegraphics[width=\textwidth]{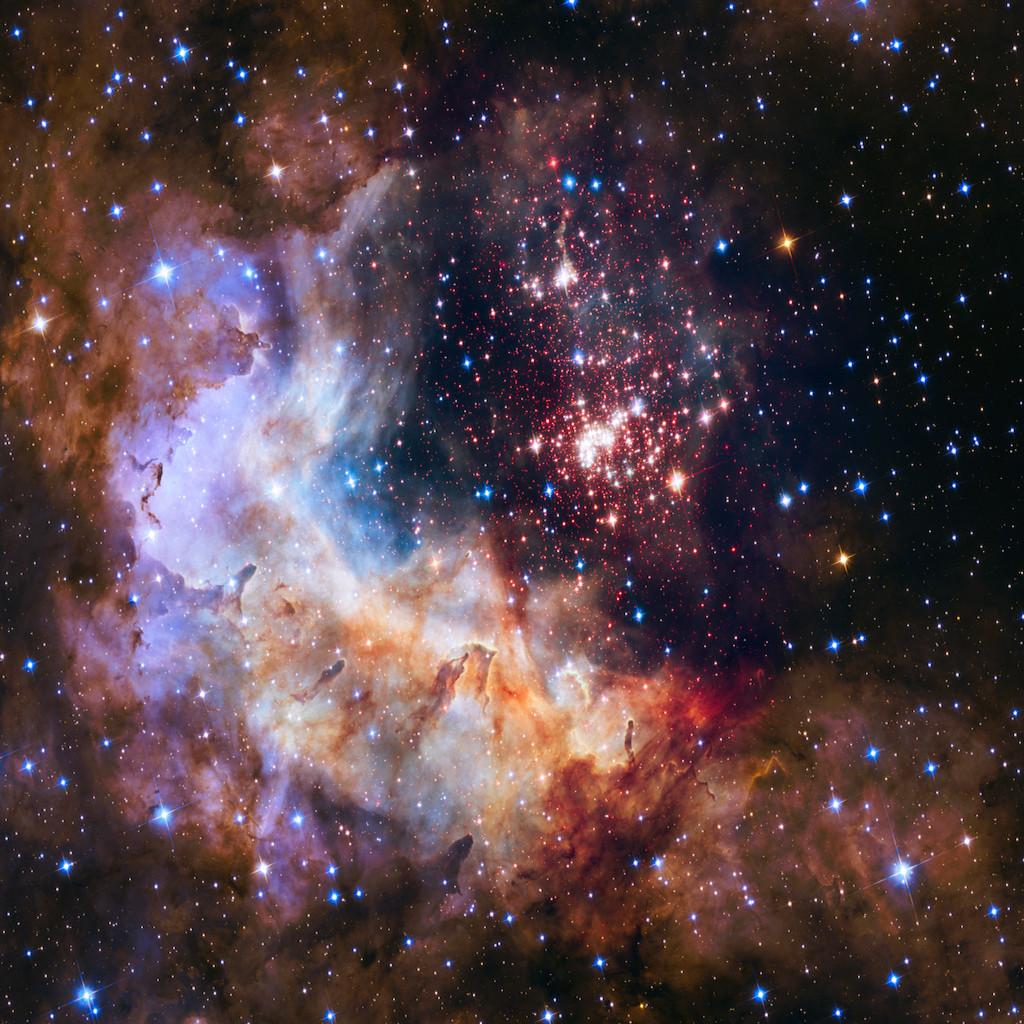}
	%	\caption{Final.}
	\end{minipage}~
	\begin{minipage}[b]{0.49\textwidth}
		\includegraphics[width=\textwidth]{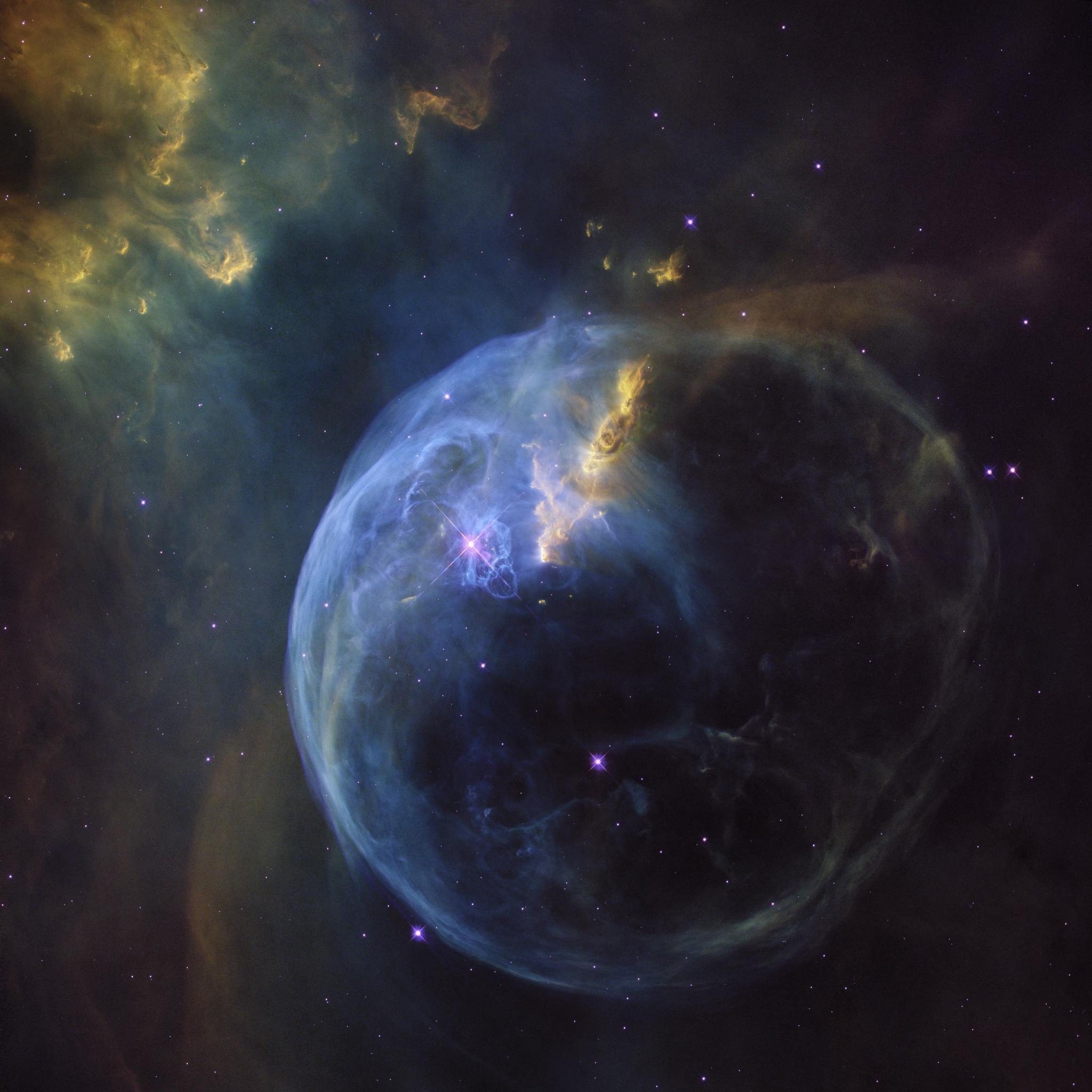}
	%	\caption{Final.}
	\end{minipage}
	 % \vspace{10pt}
%	\begin{minipage}[b]{0.4\textwidth}
%		\includegraphics[width=\textwidth]{initial_hub.jpg}
	%	\caption{Initial.}
%		\end{minipage}
		\caption{Image recovery; top row are the true images, bottom row are the images produced by the subgradient method. We do not record the  images produced by the initialization as they were both completely black.
		 Dimensions of the problem: $(n,k,d,m)\approx(1024,3,2^{22},2^{24})$ (left) and $(n,k,d,m)\approx(2048,3,2^{24},2^{25})$ (right).}
		\label{fig:hubbl_rec_image}
\end{figure}

	\begin{figure}[!h]
		\centering
		\begin{minipage}[b]{0.49\textwidth}
		\includegraphics[width=\textwidth]{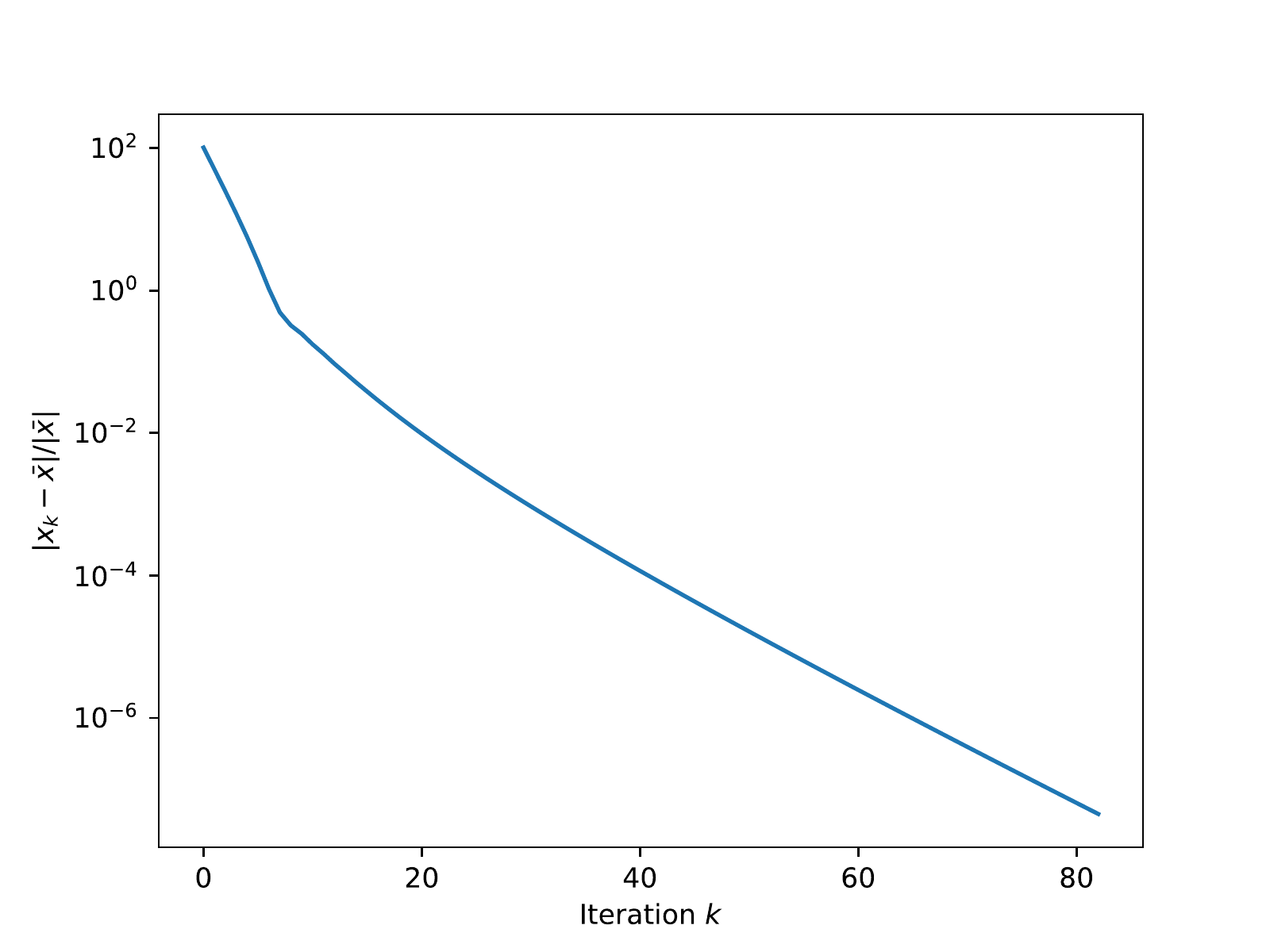}
%		\caption{Convergence plot on Hubble image (iterates vs. $\|x_k-\bar x\|$).}
		\end{minipage}~
				\begin{minipage}[b]{0.49\textwidth}
		\includegraphics[width=\textwidth]{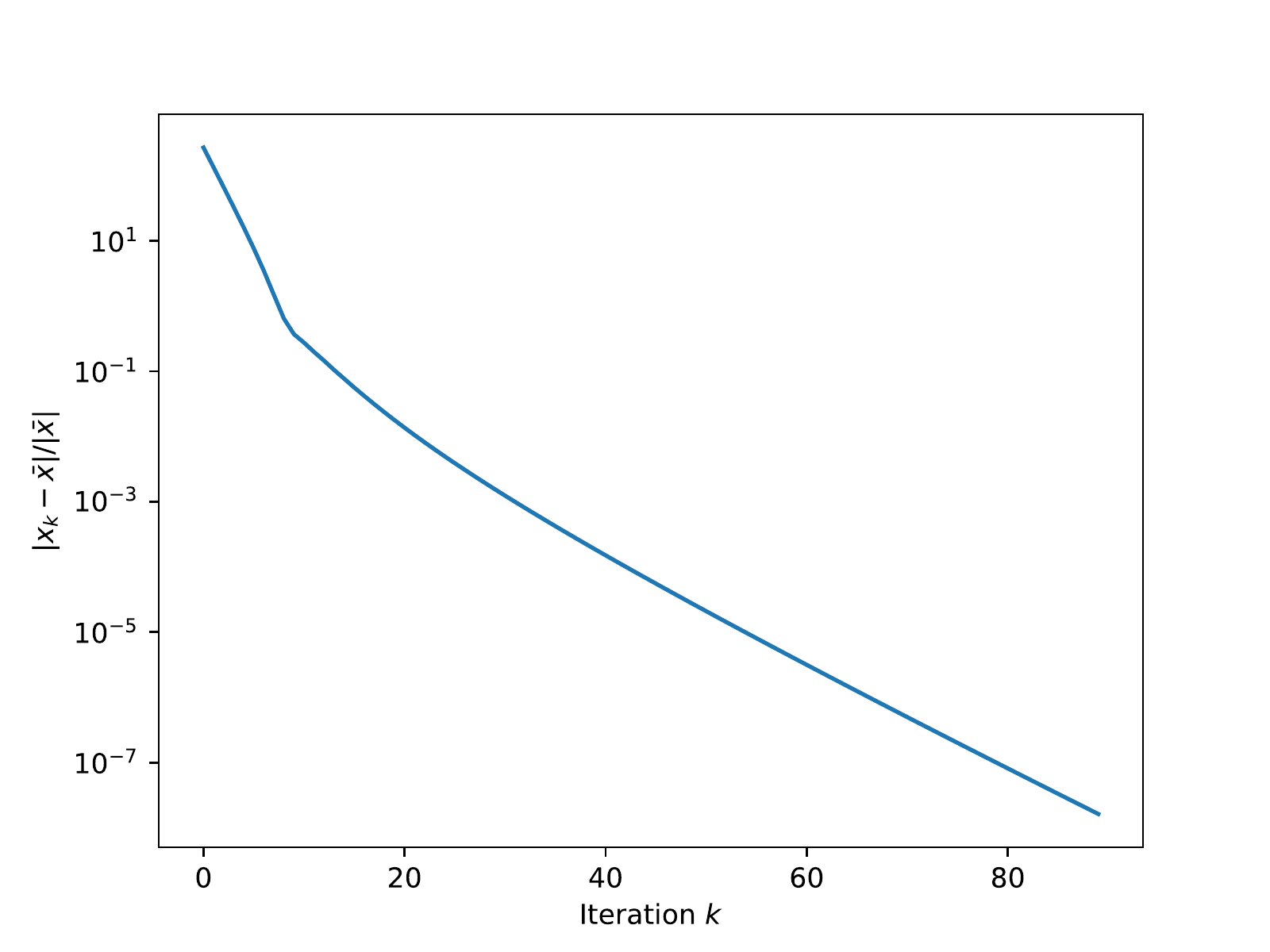}
		\end{minipage}
				\caption{Convergence plot on the two Hubble images (iterates vs. $\|x_k-\bar x\|/\|\bar x\|$).}

		\label{fig:error_hubble}
	\end{figure}

		\section{Nonsmooth landscape of the robust phase retrieval}\label{sec:landscape}
		In this section, we pursue a finer analysis of the stationary points of the robust phase retrieval objective $f_S$.	To motivate the discussion, recall that under Assumptions~\ref{ass:weird_prob} and \ref{ass:subgauss}, Lemma~\ref{eq:region_for_linear} shows that there are no extraneous stationary points $x$ satisfying
		\begin{align*} %\label{eq:region_for_linear}
		\min\left\{\frac{\|x - \bar x\|}{\|\bar x\|},\frac{\|x + \bar x\|}{\|\bar x\|}\right\} < \frac{\kappa_{\mathrm{st}}^\ast p_0}{4\sigma^2}.
		\end{align*}
		This result is uninformative when $x$ is far away from $\bar x$ or when $x$ is close to the origin. Therefore, it is intriguing to determine the location of {\em all} the stationary points of $f_S$. In this section, we will see that under a Gaussian observation model, the stationary points of $f_S$ cluster around the codimension two set, $\{0,\pm\bar x\}\cup(\bar x^{\perp}\cap c\cdot\mathbb{S}^{d-1})$, where $c\approx 0.4416$ is a numerical constant. 
		
		\subsection{A matrix analysis interlude}
		Before continuing, we introduce some basic matrix notation. We mostly follow  \cite{con_herm,eval,simp_var_anal}.
		 The symbol $\mathcal{S}^d$ will denote the
		Euclidean space of real symmetric $d\times d$-matrices with the trace inner product $\langle X,Y\rangle:=\trace(XY)$. 
			A function $f\colon\R^d\to{\R}$ is called {\em symmetric} if equality, $f(\sigma x)=f(x)$, holds for all coordinate permutations $\sigma$.
			For any symmetric function $f\colon\R^d\to{\R}$, we define the induced function on the symmetric matrices $f_{\lambda}\colon\mathcal{S}^d\to\R$  as the composition 
			$$f_{\lambda}(X):=f(\lambda(X)),$$
			where $\lambda\colon\mathcal{S}^d\to\R^d$ assigns to each  matrix $X\in \mathcal{S}^d$ its eigenvalues in nonincreasing order
			$$\lambda_1(X)\geq \lambda_2(X)\geq \ldots\geq \lambda_n(X).$$
			Note that $f$ coincides with the restriction of $f_{\lambda}$ to diagonal matrices,
			$f_{\lambda}(\Diag(x))=f(x)$. Any function on $\mathcal{S}^d$ that has the form $f_{\lambda}$ for some symmetric function $f$, is called {\em spectral}. Equivalently, spectral functions on $\mathcal{S}^d$ are precisely those that are invariant under conjugation by orthogonal matrices. 
		Henceforth, let  $\mathbb{O}^d$ be the set of real $d\times d$  orthogonal matrices.
		
			Recall that two matrices $X,V\in\mathcal{S}^d$ commute if and if they can be simultaneously diagonalized. When describing variational properties of convex spectral functions, a stronger notion is needed. We say that $X,V$ admit a {\em simultaneous ordered spectral decomposition} if there exists a matrix $U\in \mathbb{O}^d$ satisfying
			$$UVU^T=\Diag(\lambda(V))\qquad \textrm{and}\qquad UXU^T=\Diag(\lambda (X)).$$
			Thus the definition stipulates that $X$ and $V$  admit a simultaneous diagonalization, where the diagonals of the two diagonal matrices are simultaneously ordered.

		The following is a foundational theorem in the convex analysis of spectral functions, due to Lewis \cite{con_herm}. An extension to the nonconvex setting was proved in \cite{eval}, while a much simplified argument was recently presented in \cite{simp_var_anal}.
	%	 summarizes a few basic properties of spectral functions.	
		%Before we can prove Theorem~\ref{thm:pop_obj}, we must recall a few basic relationships between the symmetric functions $f$ and the spectral function $f_{\lambda}$. 
	%	The most important of these is the representation of $\partial f_{\lambda}(X)$ through the subdifferential $\partial f(\lambda(X))$. 
		%Claim \ref{claim:1}  of the following theorem was proved in 	\cite{diff_2,diff_1,twice_diff,high_order}, claim \ref{claim:2} was proved in \cite{diff_2,diff_1,twice_diff,high_order}, and claim \ref{claim:3} was proved in \cite{eval,simp_var_anal,con_herm}.

		\begin{thm}[Spectral convex analysis]\label{thm:char_spec_sbdiff}
			Consider a symmetric function $f\colon\R^d\to\R\cup\{+\infty\}$. Then  $f$ is convex if and only if  $f_{\lambda}$ is convex. Moreover, if $f$ is convex, then the subdifferential $\partial f_{\lambda}(X)$ consists of all matrices
			 $V\in \mathcal{S}^d$ satisfying $\lambda(V)\in \partial f(\lambda(X))$ and such that $X$ and $V$ admit a simultaneous ordered spectral decomposition.

%			the following are true.
%			\begin{enumerate}
%				\item\label{claim:1} $f_{\lambda}$ is convex if and only if $f$ is convex. 
%				\item\label{claim:2} $f_{\lambda}$ is $C^p$-smooth  around $X$  (for $p\geq 1$) if and only if $f$ is $C^p$-smooth around $\lambda(X)$. 
%				\item\label{claim:3} If $f$ is convex, then the inclusion $V\in \partial f_{\lambda}(X)$ holds if and only if $\lambda(V)\in \partial f(\lambda(X))$ and there exists a matrix $U\in \mathbb{O}^d$ satisfying
%				$$V=U^T\Diag(\lambda(V))U\qquad \textrm{and}\qquad X=U^T\Diag(\lambda (X))U$$
%				
%				Equality holds:
%				$$\partial f_{\lambda}(X)=\{U\Diag(v)U^T: v\in \partial f(\lambda(X)) \textrm{ and }U\in \mathbb{O}_{X}\},$$
%				where 
%				$$\mathbb{O}_{X}:=\{U\in \mathbb{O}^d: X=U\Diag(\lambda(X))U^T\}.$$
%				If $f$ is in addition convex, then the stronger characterization holds :
%				$$V\in \partial f_{\lambda}(X)\quad\Longleftrightarrow\quad \lambda(V)\in \partial f(\lambda(X)) \textrm{ and }\exists U\in\mathbb{O}_{X} \textrm{ satisfying } V=U\Diag(\lambda(V))U^T.$$
%			\end{enumerate}
		\end{thm}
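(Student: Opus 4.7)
The plan is to deduce both the convexity statement and the subdifferential formula from two foundational ingredients: (i) the symmetry and convexity of the Fenchel conjugate $f^\ast$ (which itself is symmetric because $f$ is), and (ii) Ky Fan's trace inequality, which asserts that $\langle X,V\rangle \le \langle \lambda(X),\lambda(V)\rangle$ for all $X,V\in\mathcal{S}^d$, with equality if and only if $X$ and $V$ admit a simultaneous ordered spectral decomposition. The ``only if'' direction of the convexity equivalence is immediate: since $f_\lambda(\Diag(x))=f(x)$, convexity of $f_\lambda$ restricts to convexity of $f$ on the diagonal (identifying $\R^d$ with the diagonal matrices).

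For the ``if'' direction, I would introduce the candidate function
\[
g(X):=\sup_{V\in\mathcal{S}^d}\bigl\{\langle X,V\rangle - (f^\ast)_\lambda(V)\bigr\},
\]
which is convex as a supremum of affine functions, and is spectral because conjugation by an orthogonal matrix $U$ leaves both $\langle UXU^T,V\rangle=\langle X,U^TVU\rangle$ and $(f^\ast)_\lambda(V)=(f^\ast)_\lambda(U^TVU)$ unchanged (after reparametrizing the $V$-supremum). To identify $g$ with $f_\lambda$, I would test on diagonal matrices $X=\Diag(x)$ with $x$ ordered. The upper bound $g(\Diag(x))\le f(x)$ follows by applying Fan's inequality to bound $\langle \Diag(x),V\rangle$ by $\langle x,\lambda(V)\rangle$ and then invoking the symmetry of $f$ and $f^\ast$ to restrict the Fenchel dual representation of $f(x)$ to ordered arguments. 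The matching lower bound is obtained by plugging $V=\Diag(y)$ with $y$ ordered, recovering any term $\langle x,y\rangle - f^\ast(y)$ of the Fenchel dual of $f$. Since $g$ and $f_\lambda$ are both spectral and agree on diagonal matrices, they coincide everywhere; thus $f_\lambda$ is convex and moreover $(f_\lambda)^\ast=(f^\ast)_\lambda$.

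For the subdifferential claim, I would pass through the Fenchel--Young characterization: $V\in\partial f_\lambda(X)$ if and only if
\[
f_\lambda(X)+(f_\lambda)^\ast(V)=\langle X,V\rangle,
\]
which by the previous paragraph rewrites as $f(\lambda(X))+f^\ast(\lambda(V))=\langle X,V\rangle$. Chaining Fan's inequality with the ordinary Fenchel--Young inequality for $f$ gives
\[
\langle X,V\rangle \;\le\; \langle\lambda(X),\lambda(V)\rangle \;\le\; f(\lambda(X))+f^\ast(\lambda(V)),
\]
so membership in $\partial f_\lambda(X)$ forces both inequalities to be equalities: the first supplies the simultaneous ordered spectral decomposition of $X$ and $V$, and the second gives $\lambda(V)\in\partial f(\lambda(X))$. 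The converse follows by reading the same chain of equalities backward.

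The main obstacle, and the only substantive step, is the use of Fan's trace inequality together with its equality case; everything else is packaging via Fenchel duality and the symmetry of $f$ and $f^\ast$. In a write-up I would cite Fan's inequality rather than reprove it, and otherwise proceed as above, taking care at the diagonal-reduction step to note that symmetry of $f^\ast$ (inherited from $f$) is what permits the supremum in the dual representation of $f(x)$ to be restricted to ordered vectors, which is what meshes with Fan's equality case to close the argument.
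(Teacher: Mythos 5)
The paper does not prove Theorem~\ref{thm:char_spec_sbdiff} at all: it states it and cites Lewis \cite{con_herm} (and the later simplifications \cite{eval,simp_var_anal}), so there is no in-paper proof to compare against. Your argument is in fact essentially the proof Lewis gives in the cited reference: compute the conjugate via Fan's trace inequality $\langle X,V\rangle\le\langle\lambda(X),\lambda(V)\rangle$ (with its equality case), obtain the transfer formula $(f_\lambda)^\ast=(f^\ast)_\lambda$ by comparing on diagonal matrices, and then read the subdifferential off Fenchel--Young equality by forcing equality in both links of the chain $\langle X,V\rangle\le\langle\lambda(X),\lambda(V)\rangle\le f(\lambda(X))+f^\ast(\lambda(V))$. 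Your outline is correct, with one small bookkeeping caveat: the step recovering the lower bound $g(\Diag(x))\ge f(x)$ uses $f=f^{\ast\ast}$, which needs $f$ to be proper and lower-semicontinuous (the paper's statement says only ``convex,'' but this standing hypothesis is harmless and is satisfied by the finite convex function $\varphi$ to which the theorem is applied). Also, to be precise at the diagonal-reduction step: the Hardy--Littlewood--P\'olya rearrangement inequality together with symmetry of $f^\ast$ is what lets you restrict the supremum over $y$ to nonincreasing vectors, after which $V=\Diag(y)$ achieves the bound; your write-up gestures at this but puts the symmetry remark in the wrong direction (it is the lower bound, not the upper bound, that needs it). These are expository, not substantive, issues -- the proof is sound and is the same route as the cited source.
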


		\subsection{Landscape of the population objective}

Henceforth, we fix a point $0\neq \bar x\in\R^d$ and assume that $a\in\R^d$ is a normally distributed random vector $a\sim\mathsf{N}(0,I_d)$. In this section, we will investigate the population objective of the robust phase retrieval problem:
$$f_P(x):=\EE_a\left[ |\dotp{a, x}^2 - \dotp{a, \bar x}^2|\right].$$
Our aim is to prove the following result; see Figure~\ref{fig:norm_contour} for a graphical depiction. 
\begin{thm}[Landscape of the population objective]\label{thm:pop_obj} {\hfill \\ }
The stationary points of the population objective $f_P$ are precisely 
\begin{equation}\label{eqn:stat_set}
\{0\}\cup\{\pm\bar x\}\cup \{x\in \bar x^{\perp}: \|x\|=c\cdot\|\bar x\|\},
\end{equation}
where 
$c>0$ (approx. $c\approx 0.4416$) is the unique solution of the equation
 $\frac{\pi}{4}=\frac{c}{1+c^2}+\arctan\left(c\right).$ %The two points $\{\pm\bar x\}$ are the global minimizers of $f_P$. Moreover, we have $\lambda_{\max}(\nabla^2 f(0))<0$ and $\lambda_{\min}(\nabla^2 f(x))<0$
%for all $x$ in the set $\{x\in \bar x^{\perp}: \|x\|=c\cdot\|\bar x\|\}$. In particular, $f_{P}$ has no extraneous local minimizers.
\end{thm}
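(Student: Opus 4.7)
The plan is to exploit the composite structure $f_P = F \circ \Phi$, where $\Phi(x) := xx^T - \bar x\bar x^T$ is $C^{\infty}$-smooth and $F\colon \mathcal{S}^d \to \R$ is the orthogonally invariant convex function $F(X) := \mathbb{E}_a|a^T X a|$, whose evaluation on rank-two matrices produces the closed-form arctan expression recorded in the introduction. A direct computation gives $D\Phi(x)^*[V] = 2Vx$ for symmetric $V$, so the composite chain rule yields $\partial f_P(x) = \{2Vx : V \in \partial F(\Phi(x))\}$, and $x$ is stationary for $f_P$ if and only if there exists $V \in \partial F(X)$ with $Vx = 0$, where $X := \Phi(x)$.

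The key structural step is to apply Theorem~\ref{thm:char_spec_sbdiff} to $\partial F(X)$: every subgradient $V \in \partial F(X)$ shares a simultaneous ordered eigen-decomposition with $X$, so $V$ and $X$ commute and admit a common orthonormal eigenbasis. Since $X$ has rank at most two with nonzero-eigenvalue subspace contained in $\text{span}(x, \bar x)$, the vector $x$ itself lies in this plane, and one may expand $x = a u_1 + b u_2$ in the corresponding two-dimensional eigenbasis of $X$. Writing $V u_i = \mu_i u_i$, the condition $Vx = 0$ reads $a \mu_1 = b \mu_2 = 0$. The arctan formula for $F$ rules out the degenerate case $\mu_1 = \mu_2 = 0$, so $x$ must be parallel to an eigenvector of $X$. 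A short computation in the basis $(\bar x/\|\bar x\|, w)$ for $\text{span}(x,\bar x)$ shows that $x$ is an eigenvector of $X = xx^T - \bar x\bar x^T$ only when $x = 0$, $x \parallel \bar x$, or $x \perp \bar x$.

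It then remains to scan these two one-parameter families. For $x = t\bar x$, the Gaussian expectation collapses to $f_P(t\bar x) = |t^2 - 1|\|\bar x\|^2$, whose only stationary points on the line are $t \in \{0, \pm 1\}$, with orthogonal components of the subdifferential vanishing automatically by rotational invariance in $\bar x^\perp$. For $x \perp \bar x$ with $\|x\| = s\|\bar x\|$ and $s > 0$, the nonzero eigenvalues of $X$ are $s^2\|\bar x\|^2$ and $-\|\bar x\|^2$; substituting into the closed-form $f_P$ yields the scalar function $\tfrac{4}{\pi}[(s^2 - 1)\arctan(s) + s] - (s^2 - 1)$, whose derivative in $s$, after dividing by $2s$, simplifies to exactly $\pi/4 - \arctan(s) - s/(1+s^2)$. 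Setting this to zero recovers the transcendental equation defining $c$, whose unique positive root is $c \approx 0.4416$.

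The main obstacle I anticipate is not the case-by-case calculation but the clean application of spectral variational calculus through the degenerate map $\Phi$: near $x = 0$ and $x = \pm \bar x$, where $X$ has a repeated zero eigenvalue, the simultaneous-ordered-decomposition condition must be unpacked carefully, and one must verify that the $\bar x$-directional derivative of $f_P$ at a point $x \perp \bar x$ vanishes automatically — as it does, because $V$ restricted to $\text{span}(x,\bar x)$ is diagonal in the orthonormal basis $(\bar x/\|\bar x\|, x/\|x\|)$, so $\bar x^T V x = 0$ — rather than imposing an extra constraint on $s$. These verifications require care but no new ideas beyond Lewis's spectral calculus and the explicit arctan formula for $F$.
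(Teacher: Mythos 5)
Your proposal is correct and rests on the same pillars as the paper's proof: the spectral representation $f_P = \varphi_\lambda(X)$ with $X = xx^T - \bar x\bar x^T$, the chain rule $\partial f_P(x) = 2\partial\varphi_\lambda(X)\,x$, Lewis's simultaneous-ordered-decomposition theorem (Theorem~\ref{thm:char_spec_sbdiff}), the observation $\spann\{U_1,U_d\}=\spann\{x,\bar x\}$ (the paper's Lemma~\ref{lem:correlation}), and the explicit derivative of $\zeta$ (Lemma~\ref{lem:weird_comput_slope}). Where you diverge is in how you rule out the ``bad'' subcases: the paper's Corollary~\ref{cor:find_stat} invokes the convexity inequality $g(x)-g(\bar x)\le\langle V,X\rangle$ from Lemma~\ref{lem:lower_bound_derivative} to force $\lambda_1(V)=0$, whereas you argue more algebraically that $Vx=0$ together with $(\lambda_1(V),\lambda_d(V))\neq(0,0)$ forces $x$ to be an eigenvector of $X$, and hence (when $x\not\parallel\bar x$) that $\langle x,\bar x\rangle=0$. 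Both routes use the same commuting-eigenbasis structure; yours trades the convexity inequality for a slightly more geometric argument. Two small points worth tightening if this were to be written up formally: (i) the assertion that the nondegenerate case $\lambda_1(V)=\lambda_d(V)=0$ is impossible is best justified by positive homogeneity of $\zeta$ (Euler's identity $\zeta(y)=\langle\nabla\zeta(y),y\rangle$ for a norm) rather than by inspection of the arctan formula; and (ii) the appeal to ``rotational invariance'' on the collinear axis is informal — the cleaner statement is that for $t\bar x$ with $t\neq 0,\pm 1$ the matrix $X$ is rank one so $\nabla\varphi(\lambda(X))=\pm(1,\ldots,1)$, hence $V=\pm I$ and $\nabla f_P(t\bar x)=\pm 2t\bar x\neq 0$. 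These are gaps in exposition, not in the underlying argument.
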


\begin{figure}[h!]
	\centering
	\includegraphics[scale=1.2]{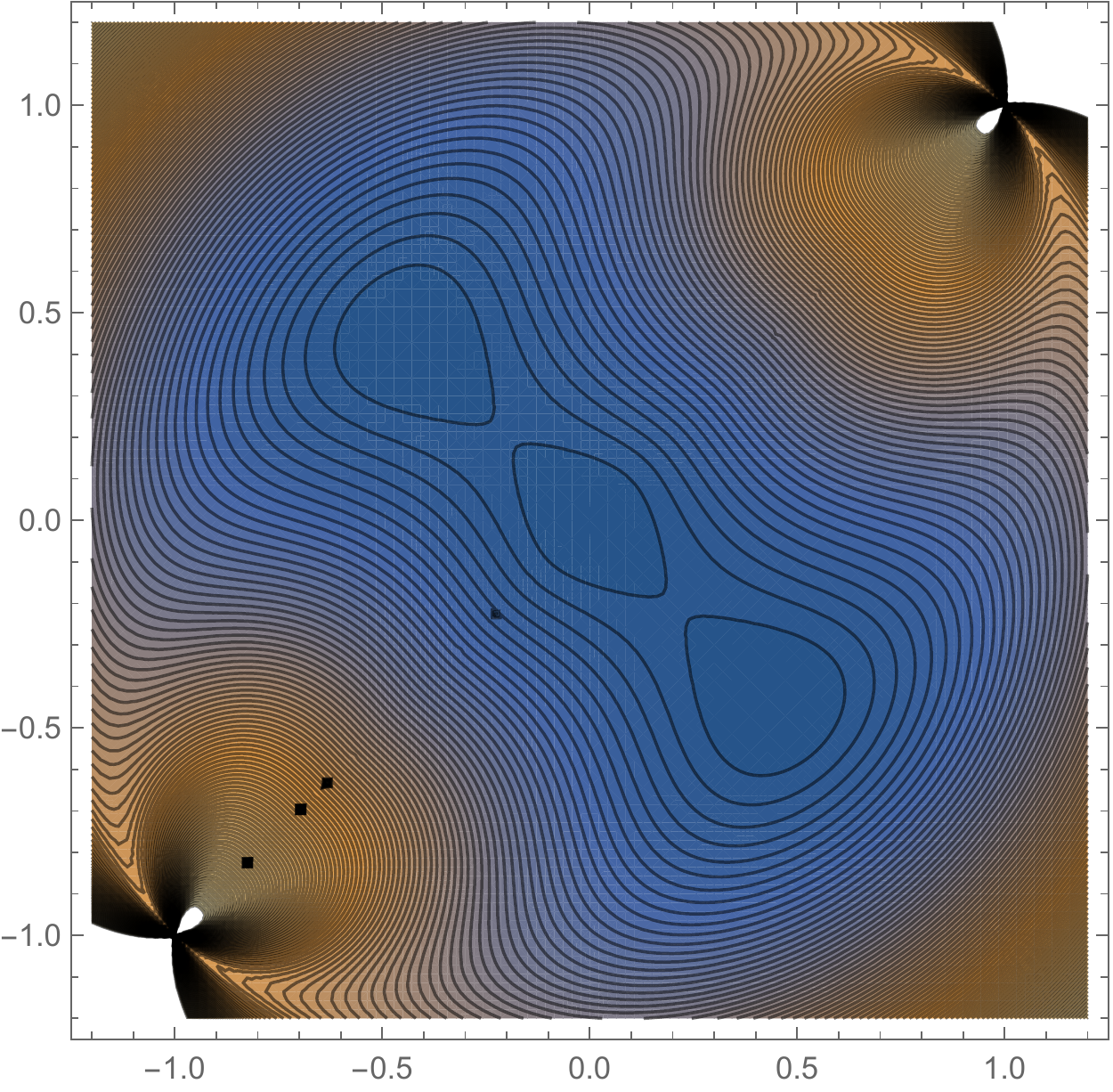}
	\caption{The contour plot of the function $x\mapsto\|\nabla f_P(x)\|$, where $\bar x=(1,1)$. The global minimizers of $f_P$ are $\pm\bar x$, while the three extraneous stationary points are $(0,0)$ and $\pm c(-1,1)$, where $c\approx 0.4416$.}
	\label{fig:norm_contour}
\end{figure}

Theorem~\ref{thm:pop_obj} provides an exact characterization of the stationary points of the population objective $f_P$. Looking ahead, when we will pass to the subsampled objective $f_S$ in Section~\ref{sec:concent_stab}, we will show that every stationary point of $f_S$ is {\em close} to an {\em approximately} stationary point of $f_P$. Therefore it will be useful to have an extension of Theorem~\ref{thm:pop_obj} that locates approximately stationary points of $f_P$. %Our argument follows the same outline as that of the exact version of the theorem, while being much more technical. Therefore we have placed it in the appendix.
This is the content of the following theorem.
%We will in fact prove the following much stronger guarantee on the location of approximate stationary points of $f_P$. 

\begin{thm}[Location of approximate stationary points]\label{thm:pop_obj_quant_vers_main}
There exists a numerical constant $\gamma>0$ such that the following
holds. For any point $x\in\R^d$ with $$\varepsilon:=\dist(0;\partial
f_P(x))\leq \gamma \|x\|,$$  it must be the case that $\|x\| \lesssim \|\bar x\|$ and $x$ satisfies either
\begin{equation*}
\|x\|\|x-\bar x\|\|x+\bar x\|\lesssim \varepsilon \|\bar
x\|^2\qquad \textrm{or}\qquad\left\{\begin{aligned}
\left|\|x\|-c\|\bar x\| \right|&\lesssim \varepsilon \frac{\|\bar
  x\|}{\|x\|} \\
% \left|\|x\|^2-c^2\|\bar x\|^2\right|&\lesssim \varepsilon\left(\frac{\|x\|^2}{\|\bar x\|}+\frac{\|\bar x\|^2}{\|x\|}+\|x\|\right)\\
|\langle x,\bar x\rangle|&\lesssim \varepsilon \|\bar x\|
\end{aligned}\right\},
\end{equation*}
where $c>0$ is the unique solution of the equation
 $\frac{\pi}{4}=\frac{c}{1+c^2}+\arctan\left(c\right).$
\end{thm}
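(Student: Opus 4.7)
The plan is to exploit the composite structure $f_P = h\circ c$, where $c(x)=xx^T-\bar x\bar x^T$ and $h:\mathcal{S}^d\to\R$ is the orthogonally invariant spectral function
$$h(X)=\tfrac{4}{\pi}\!\left[\trace(X)\arctan\sqrt{\left|\tfrac{\lambda_{\max}(X)}{\lambda_{\min}(X)}\right|}+\sqrt{|\lambda_{\max}(X)\lambda_{\min}(X)|}\right]-\trace(X).$$
By the composite chain rule, $\partial f_P(x)=\{2Vx:V\in\partial h(c(x))\}$, so the assumption $\dist(0,\partial f_P(x))\leq\varepsilon$ reduces to the existence of $V\in\partial h(X)$ with $\|Vx\|\leq\varepsilon/2$. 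I would then invoke Theorem~\ref{thm:char_spec_sbdiff} to describe $\partial h(X)$: each such $V$ admits a simultaneous ordered spectral decomposition with $X$, and its eigenvalues lie in the subdifferential of the underlying symmetric function of eigenvalues.

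Since $X$ has rank at most two, with nonzero eigenvectors inside $\spann\{x,\bar x\}$, the analysis compresses to a $2\times 2$ eigenvalue computation. I would parametrize $x=\alpha\bar x/\|\bar x\|+\beta u$ for a unit vector $u\in\bar x^{\perp}\cap\spann\{x,\bar x\}$, and use the identities
$$\lambda_1(X)+\lambda_2(X)=\|x\|^2-\|\bar x\|^2,\qquad \lambda_1(X)\lambda_2(X)=\langle x,\bar x\rangle^2-\|x\|^2\|\bar x\|^2\leq 0,$$
to express everything in terms of $(\alpha,\beta)$. When the two nonzero eigenvalues are distinct, the simultaneous-diagonalization requirement forces $V$ to act as a scalar on $u$ and on $\bar x/\|\bar x\|$, with those scalars given by the partial derivatives of the symmetric eigenvalue function of $h$. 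Multiplying by $x$ and projecting onto the basis $\{\bar x/\|\bar x\|,u\}$ converts $\|Vx\|\leq\varepsilon/2$ into two explicit scalar inequalities in $\alpha$ and $\beta$.

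Next I would analyze these scalar inequalities by splitting into two geometric regimes, mirroring the exact characterization~\eqref{eqn:stat_set}:
\begin{enumerate}
\item[(i)] \emph{Near-collinear regime:} $\langle x,\bar x\rangle^2$ is close to $\|x\|^2\|\bar x\|^2$, i.e.\ $\lambda_1\lambda_2\approx 0$. Here one eigenvalue of $X$ is small, $h$ is smooth on the other, and the scalar stationarity conditions degenerate into an expression proportional to $(\|x\|^2-\|\bar x\|^2)\langle x,\bar x\rangle$, which after a short rearrangement yields the bound $\|x\|\|x-\bar x\|\|x+\bar x\|\lesssim\varepsilon\|\bar x\|^2$.
\item[(ii)] \emph{Near-orthogonal regime:} $|\langle x,\bar x\rangle|$ is a small fraction of $\|x\|\|\bar x\|$. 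Here the stationarity condition along $u$ reduces, after substituting $\lambda_1,\lambda_2$, to a one-dimensional equation that the ratio $t:=\|x\|/\|\bar x\|$ must approximately satisfy, namely $\tfrac{\pi}{4}\approx\tfrac{t}{1+t^2}+\arctan(t)$. Since the defining function has nonzero derivative at $t=c$, a standard quantitative inverse-function argument upgrades this approximate equation to $|\|x\|-c\|\bar x\||\lesssim\varepsilon\|\bar x\|/\|x\|$; the other scalar inequality directly gives $|\langle x,\bar x\rangle|\lesssim\varepsilon\|\bar x\|$.
\end{enumerate}

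The bound $\|x\|\lesssim\|\bar x\|$ I expect to fall out as a preliminary step: for $\|x\|\gg\|\bar x\|$, $h$ is smooth at $X$ and $Vx$ is of order $\|x\|^3$, incompatible with $\|Vx\|\leq \gamma\|x\|^2$ for a sufficiently small numerical constant $\gamma$. The main obstacle is pulling the perturbation analysis cleanly through the boundary between regimes (i) and (ii), and handling the degenerate case $\beta=0$ (where $X$ has only a single nonzero eigenvalue) separately by a direct one-dimensional argument. Tracking the asymptotics of the spectral subdifferential as $\lambda_2(X)\to 0^-$ — in particular justifying that the formally-defined subgradient still captures the correct geometry — is the most delicate point, and I would address it using the smoothness of $h$ away from scalar multiples of the identity together with the simple form of $X$ on the invariant $2$-plane.
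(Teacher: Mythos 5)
The overall skeleton — reduce to a $2\times 2$ eigenvalue problem, use the spectral subdifferential description, split into a ``near-collinear'' and a ``near-orthogonal'' regime, and finish with a perturbation argument around $t=c$ — is the right shape and matches the paper's strategy in spirit. However several of the concrete steps, as written, would fail.

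First, the assertion that the simultaneous-diagonalization requirement forces $V$ to act as a scalar on $u$ and on $\bar x/\|\bar x\|$ is incorrect except in the special case $x\perp\bar x$. Theorem~\ref{thm:char_spec_sbdiff} says $V$ shares eigenvectors with $X$, and those are $U_1,U_d$ — which are rotated within $\spann\{x,\bar x\}$ away from the basis $\{u,\bar x/\|\bar x\|\}$ by an angle depending on $\langle x,\bar x\rangle$. Your two scalar inequalities must therefore be written in the $\{U_1,U_d\}$ basis, and the translation of a bound on $|\lambda_1(V)|$ (or on $\langle U_d,x\rangle$) into a bound on $\|x\|/\|\bar x\|$ and $\langle x,\bar x\rangle$ is exactly the nontrivial perturbation step, not a bookkeeping rearrangement. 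The paper handles this with Lemma~\ref{lem:correlation} (the anticorrelation/correlation identities in the $U_1,U_d$ basis) plus a Gershgorin estimate (Lemma~\ref{lem:weird_eigenvalue_bound}); something of that strength is unavoidable.

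Second, the preliminary bound $\|x\|\lesssim\|\bar x\|$ does not come from a growth-rate count of the kind you describe: $V\in\partial\varphi_\lambda(X)$ is bounded in operator norm by the (numerical) Lipschitz constant of the norm $\varphi$, so $\|Vx\|$ is $O(\|x\|)$, not $O(\|x\|^3)$, and the hypothesis is $\dist(0;\partial f_P(x))\le\gamma\|x\|$, not $\le\gamma\|x\|^2$. With these corrected scales the argument is a comparison of constants, and to make it run you need a quantitative lower bound on $\|Vx\|/\|x\|$ when $\|x\|/\|\bar x\|$ is large. The paper produces that lower bound from sharpness of $f_P$: the inequality $f_P(x)-f_P(\bar x)\ge\kappa\|x-\bar x\|\|x+\bar x\|$ feeds into Lemmas~\ref{lem:sharp_f} and \ref{lem:important_rand_lem}, which give $\max\{|\lambda_1(V)|,|\lambda_d(V)|\}\ge\kappa/2$ and a coercivity estimate for $\dist(0;\partial f_P)$.

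This points to the central omission: sharpness is nowhere in your plan. In the paper it is the engine behind almost every bound — the a priori control $\|x\|\le\delta\|\bar x\|$, the dichotomy between the two regimes (the quantity $\rho_3$ in the paper's Theorem~\ref{thm:pop_obj_quant} is calibrated against sharpness), the lower bound on $|\lambda_d(V)|$ needed to extract $|\langle U_d,x\rangle|\lesssim\varepsilon$, and hence the final $|\langle x,\bar x\rangle|$ estimate. Without some substitute quantitative lower bound on the objective gap, the ``short rearrangement'' in your regime (i) and the ``standard quantitative inverse-function argument'' in regime (ii) do not have the ingredients they need. You correctly flag the $\lambda_2(X)\to 0^-$ boundary as the delicate point, but even away from it the above gaps would need to be filled before the proof closes.
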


We present the proofs of Theorem~\ref{thm:pop_obj} in Section~\ref{sec:proof_5.2}, and defer the proof of Theorem~\ref{thm:pop_obj_quant_vers_main} to the Appendix (Section~\ref{sec:cray_sec_inexact}), as the latter requires a much more delicate argument.
At their core, the arguments rely on the observation that the population objective $f_P(x)$ depends on the input vector $x$ only through the eigenvalues of the rank two matrix $xx^T-\bar x\bar x^T$. This observation was already implicitly used by Cand{\`e}s et al. \cite{phase_lift}. Since this matrix will appear often in the arguments, we will use the symbol $X:=xx^T-\bar x\bar x^T$ throughout. For ease of reference, we record the following simple observation: the matrix $X$ is typically indefinite.

 %We record the following simple lemma for ease of reference.
 \begin{lem}[Eigenvalues of the rank two matrix]\label{lem:rand_eigen}
 	Suppose  $x$ and $\bar x$ are not collinear. Then $X$ has exactly one strictly positive and one strictly negative eigenvalue.
 \end{lem}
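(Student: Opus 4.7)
The plan is to combine a rank computation with a simple quadratic-form test. First, I would observe that $X=xx^T-\bar x\bar x^T$ is a symmetric matrix that lies in the span of two rank-one symmetric matrices, so its rank is at most $2$. Since $x$ and $\bar x$ are not collinear, both are nonzero and the subspace $V:=\spann\{x,\bar x\}$ is two-dimensional. Because $Xy=\langle x,y\rangle x-\langle \bar x, y\rangle \bar x$ lies in $V$ for every $y$, and because one can check that $X|_V$ is injective (the image of $V$ under $X$ is spanned by two linearly independent vectors, coming from evaluating $X$ on any basis of $V$), the rank of $X$ is exactly $2$. Therefore $X$ has exactly two nonzero eigenvalues.

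Next, I would test the sign of the quadratic form $v^T X v=\langle v,x\rangle^2-\langle v,\bar x\rangle^2$ at the two natural choices $v=x$ and $v=\bar x$. The Cauchy--Schwarz inequality, together with the fact that $x$ and $\bar x$ are non-collinear (and in particular both nonzero), gives the strict bounds
\begin{equation*}
x^T X x=\|x\|^4-\langle x,\bar x\rangle^2>0\qquad\text{and}\qquad \bar x^T X \bar x=\langle x,\bar x\rangle^2-\|\bar x\|^4<0.
\end{equation*}
By the Courant--Fischer variational characterization of eigenvalues (or equivalently Sylvester's law of inertia), the existence of such vectors forces $\lambda_{\max}(X)>0$ and $\lambda_{\min}(X)<0$. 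Combined with the fact that $X$ has only two nonzero eigenvalues, this immediately yields the claim: exactly one eigenvalue is strictly positive, and exactly one is strictly negative.

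There is no real obstacle here; the argument is essentially a two-line verification once one sets up the rank-two reduction. If one prefers an even more explicit route, one can compute the matrix of $X|_V$ in the basis $\{x,\bar x\}$ and observe that its determinant equals $\langle x,\bar x\rangle^2-\|x\|^2\|\bar x\|^2$, which is strictly negative by Cauchy--Schwarz; since the determinant is basis-independent and equals the product of the two nonzero eigenvalues of $X$, these eigenvalues must have opposite signs.
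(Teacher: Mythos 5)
Your proof is correct, but it takes a genuinely different route from the paper's. The paper argues by contraposition: if the claim fails, then (using the implicit fact that $X$ has rank at most two) $X$ must be either positive or negative semidefinite; positive semidefiniteness forces $x^\perp\subset\bar x^\perp$, hence collinearity, and the negative semidefinite case is symmetric. You instead argue directly: after observing $\mathrm{rank}(X)\leq 2$, you test the quadratic form at $v=x$ and $v=\bar x$ and use the \emph{strict} Cauchy--Schwarz inequality (where non-collinearity enters) to produce one direction of strict positivity and one of strict negativity, which together with the rank bound forces the inertia $(1,1)$. Both are clean; yours is a little more concrete, exhibiting explicit witnesses for the two signs, while the paper's is slightly shorter because it only needs the one-line observation about orthogonal complements. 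One small remark: your intermediate step proving $\mathrm{rank}(X)=2$ exactly is superfluous — ``at most two nonzero eigenvalues'' combined with ``at least one of each sign'' already pins down the inertia — though of course it does no harm, and your closing determinant computation on $\spann\{x,\bar x\}$ is the most economical way to package the whole argument.
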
 
 \begin{proof}
 	Suppose the claim is false. Then either $X$ is positive semidefinite or negative semidefinite. Let us dispense with the first case.
 	Observe $X\succeq 0$ if and only if $(x^Tv)^2-(\bar x^T v)^2\geq 0$ for all $v$. Hence if $X$ were positive semidefinite, we would deduce $x^{\perp}\subset \bar x^{\perp}$; that is, $x$ and $\bar x$ are collinear, a contradiction. The case  $X\preceq 0$ is analogous.
 \end{proof}

The following lemma, as we alluded to above, shows that $f_P(x)$ depends on  $x$ only through the eigenvalues of the rank two matrix $X=xx^T-\bar x\bar x^T$.
\begin{lem}[Spectral representation of the population objective]\label{lem:spec_represe}{\hfill \\ }
For all points $x\in\R^d$, equality holds:
\begin{equation}\label{eqn:pop_spec}
f_P(x)=\EE_{v} \left[\Big| \langle \lambda(X), v\rangle\Big|\right],
\end{equation}
where $v_i\in\R$ are i.i.d. chi-squared random variables $v_i\sim\chi^2_1$. 
\end{lem}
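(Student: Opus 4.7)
The plan is to rewrite the quantity inside the absolute value as a quadratic form in $a$ with matrix $X = xx^T - \bar x \bar x^T$, then diagonalize $X$ and exploit the rotational invariance of the Gaussian measure.

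First I would observe the algebraic identity
\[
\langle a, x\rangle^2 - \langle a, \bar x\rangle^2 = a^T(xx^T - \bar x\bar x^T)a = a^T X a,
\]
so that $f_P(x) = \EE_a[\,|a^T X a|\,]$. Next, since $X \in \mathcal{S}^d$ is symmetric, I would invoke the spectral theorem to write $X = U \Diag(\lambda(X)) U^T$ for some orthogonal matrix $U \in \mathbb{O}^d$. Substituting,
\[
a^T X a = a^T U \Diag(\lambda(X)) U^T a = \sum_{i=1}^d \lambda_i(X)\,(U^T a)_i^2.
\]

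The key probabilistic step is to use rotational invariance: because $a \sim \mathsf{N}(0, I_d)$ and $U$ is orthogonal, the vector $\tilde a := U^T a$ is again distributed as $\mathsf{N}(0, I_d)$. In particular, its coordinates $\tilde a_i$ are i.i.d.\ standard normals, and therefore $v_i := \tilde a_i^2$ are i.i.d.\ $\chi^2_1$ random variables. Consequently,
\[
f_P(x) = \EE_{\tilde a}\!\left[\,\bigg|\sum_{i=1}^d \lambda_i(X)\,\tilde a_i^2\bigg|\,\right] = \EE_v\!\left[\,|\langle \lambda(X), v\rangle|\,\right],
\]
which is exactly the claimed identity.

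I do not anticipate any real obstacle here; the argument is essentially three lines, and the only care needed is to note that $U$ depends on $x$ and $\bar x$ but the distribution of $U^T a$ does not, so one may pass the expectation through the change of variables without issue. The lemma is included mainly to set up the later eigenvalue-based analysis of the stationary points of $f_P$.
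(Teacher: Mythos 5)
Your proof is correct and follows essentially the same strategy as the paper's: reduce the argument of the absolute value to the quadratic form $a^T X a$, diagonalize $X$, and apply rotational invariance of the Gaussian. The only cosmetic difference is that you get to $a^T X a$ directly from $\langle a,x\rangle^2 - \langle a,\bar x\rangle^2 = a^T(xx^T - \bar x\bar x^T)a$, whereas the paper first factors this as $\langle a,x-\bar x\rangle\langle a,x+\bar x\rangle$, passes to the rank-one matrix $M=(x+\bar x)(x-\bar x)^T$, and then symmetrizes $M$ to obtain $X$ — your route is a bit more streamlined but mathematically identical.
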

\begin{proof}
Observe the equalities:
\begin{align*}
f_P(x) =\EE_a\left[ |\dotp{a, x}^2 - \dotp{a, \bar x}^2|\right]&= \EE_a[|\langle a,x-\bar x \rangle\langle a,x+\bar x \rangle|]\\
&=\EE_a[|(x-\bar x)^T a a^T(x+\bar x)|]\\
%&=\EE\left[ |\trace((x-x^\ast) aa^T (x + x^\ast))|\right]\\
%&=  \EE\left[ |\trace(aa^T (x + x^\ast)(x-\bar x)^T)|\right]\\
&=  \EE_a\left[| \trace\left(a^T (x + \bar x)(x-\bar  x)^Ta\right)|\right].
\end{align*}
Thus in terms of the matrix $M := (x + \bar x)(x-\bar x)^T$, we have  $f_P(x)=\EE_a\left[| \trace\left(a^TM a\right)|\right]$. Taking into account the equalities  $a^TM a=a^T\left(\frac{M+M^T}{2}\right)a=a^TXa$, we deduce 
$$f_P(x)=\EE_a\left[| \trace\left(a^TX a\right)|\right].$$
% Notice that 
%\begin{align*}
%S = \frac{1}{2}(M + M^T) &= \frac{1}{2}\left[(x + x^\ast)(x-x^\ast)^T + (x-x^\ast)(x + x^\ast)^T \right]\\
%&= \frac{1}{2}\left[ xx^T  - x^\ast (x^\ast)^T  + xx^T  - x^\ast (x^\ast)^T\right]\\
%&= ( xx^T  - x^\ast (x^\ast)^T).
%\end{align*}
%Therefore, 
%\begin{align*}
%&\EE\left[ |\trace(a^T (x + x^\ast)(x-x^\ast)^Ta)|\right] \\
%&= \EE\left[| \trace(a^TSa)|\right] \\
%\end{align*}
Form now an eigenvalue decomposition 
$
X = U \Diag(\lambda(X)) U^T$,
where $U\in \R^{d\times d}$ is an orthogonal matrix. Rotation invariance of the Gaussian distribution then implies
$$
 \EE_a\left[| \trace(a^TXa)|\right]  = \EE_a\left[| \trace((Ua)^T X(Ua))|\right] = \EE_{u} \left[\left|\sum_{i=1}^d \lambda_i(X) u_i^2\right|\right],
$$
where $u_i$ are i.i.d standard normals. The result follows.
\end{proof}

Thus Lemma~\ref{lem:spec_represe} shows that the population objective $f_P$ is a spectral function of $X$. Combined with Lemma~\ref{lem:rand_eigen}, we deduce that there are two ways to rewrite the population objective in composite form:
$$f_P(x)=\varphi_{\lambda}(X)\qquad \textrm{ and }\qquad f_P(x)=\zeta(\lambda_1(X),\lambda_d(X)),$$
where 
\begin{equation}\label{eqn:phipsi}
\varphi(z):=\EE_{v} \left[\Big|\langle z, v\rangle\Big|\right]\qquad \textrm{and}\qquad \zeta(y_1,y_2):= \EE_{v_1,v_2} \left[| v_1 y_1+ v_2 y_2|\right].
\end{equation}
Notice that $\varphi$ and $\zeta$ are norms on $\R^d$ and $\R^2$, respectively. It is instructive to compute $\zeta$ in closed form, yielding the following lemma. Since the proof is a straightforward computation, we have placed it in the appendix.

\begin{lem}[Explicit representation of the outer function]\label{lem:cray_compute} {\hfill \\ } Let $v_1, v_2\sim\chi^2_1$ be i.i.d. chi-squared. Then for all real $(y_1,y_2)\in \R_+\times\R_{-}$, equality holds:
$$
\EE_{v_1,v_2} \left[| v_1 y_1  + v_2 y_2 |\right] = 
						\frac{4}{\pi}
                                                        \left [
                                                        (y_1+y_2)
                                                        \arctan \left
                                                        (
                                                        \sqrt{-\frac{y_1}{y_2}
                                                        }\right ) +
                                                        \sqrt{-y_1
                                                        y_2}
                                                        \right ] -
                                                        (y_1+y_2).                                                           
$$

%$$\EE_v \left[| \sigma_1 v_1^2 + \sigma_2 v_2^2|\right]=\sigma_1 \EE_v v_1^2+\sigma_2 \EE_v v_2^2=\sigma_1+\sigma_2.$$

\end{lem}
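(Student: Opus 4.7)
My plan is to represent each chi-squared variable as $v_i = w_i^2$ with $w_1,w_2\sim \mathsf{N}(0,1)$ independent, which turns the expectation into a two-dimensional Gaussian integral. Switching to polar coordinates $(w_1,w_2)=(r\cos\theta,r\sin\theta)$ factors the integral into a radial and an angular piece. The radial integral $\int_0^\infty r^3 e^{-r^2/2}\,dr$ equals $2$ by the substitution $u=r^2/2$, so after applying the normalizing constant $1/(2\pi)$ the expectation reduces to
\[
\EE_{v_1,v_2}[|v_1 y_1+v_2 y_2|] \;=\; \frac{1}{\pi}\int_0^{2\pi}\!\bigl|y_1\cos^2\theta+y_2\sin^2\theta\bigr|\,d\theta.
\]

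Next, by $\pi$-periodicity of $\cos^2$ and $\sin^2$ and symmetry about $\pi/2$, the integral further reduces to $\frac{4}{\pi}\int_0^{\pi/2}\!|f(\theta)|\,d\theta$ with $f(\theta):=y_1\cos^2\theta+y_2\sin^2\theta$. Since $y_1\geq 0\geq y_2$ (and neither is the zero case, which is trivial), $f$ vanishes uniquely on $[0,\pi/2]$ at $\theta_0:=\arctan\sqrt{-y_1/y_2}$, is nonnegative on $[0,\theta_0]$, and nonpositive on $[\theta_0,\pi/2]$. Using the elementary antiderivative
\[
F(\theta)\;:=\;\tfrac{1}{2}(y_1+y_2)\theta+\tfrac{1}{2}(y_1-y_2)\sin\theta\cos\theta,
\]
splitting the integral at $\theta_0$ yields $\int_0^{\pi/2}\!|f(\theta)|\,d\theta=2F(\theta_0)-F(\pi/2)$.

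What remains is purely algebraic simplification of $F(\theta_0)$. From $\tan^2\theta_0=-y_1/y_2$ and $\sin\theta_0\cos\theta_0 = \tan\theta_0/(1+\tan^2\theta_0)$, a short manipulation gives $(y_1-y_2)\sin\theta_0\cos\theta_0=\sqrt{-y_1y_2}$. Plugging this into $2F(\theta_0)-F(\pi/2)$ and multiplying by $4/\pi$ delivers the claimed formula. No step is genuinely delicate; the only mild nuisance is sign bookkeeping, since $y_2\leq 0$ forces every radical to be written in terms of the nonnegative quantities $-y_1/y_2$ and $-y_1y_2$ to stay real.
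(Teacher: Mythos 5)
Your proposal is correct and takes essentially the same route as the paper: both pass to a two-dimensional Gaussian integral, switch to polar coordinates, integrate out the radial factor $\int_0^\infty r^3 e^{-r^2/2}\,dr=2$, and then handle the angular integral $\int|y_1\cos^2\theta+y_2\sin^2\theta|\,d\theta$ by splitting at the sign change $\theta_0=\arctan\sqrt{-y_1/y_2}$ with the identical trigonometric antiderivative. The only cosmetic difference is that you invoke the $\pi$-periodicity and reflection symmetry up front to cut the range to $[0,\pi/2]$ before splitting, whereas the paper splits the plane into the regions $R_1,R_2$ first and then computes a handful of angular wedges separately.
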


%Figure~\ref{fig:contours_outer} shows the contour plot of the function in Lemma~\ref{lem:cray_compute}. 
\begin{figure}[h!]
	\centering
		\includegraphics[scale=0.7]{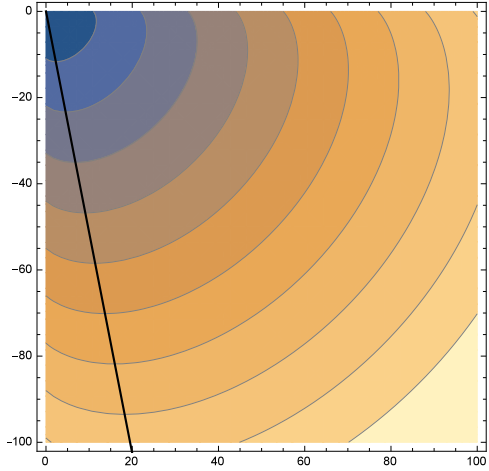}
		\caption{Contour plot of the function $\zeta(y_1,y_2):=\EE_{v_1,v_2} \left[| v_1 y_1  + v_2 y_2 |\right]$ on $\R_+\times\R_{-}$. The black line depicts all points $(y_1,y_2)$ with $\nabla_{y_1}\zeta(y_1,y_2)=0$; for the explanation of the significance of this line, see Lemma~\ref{lem:weird_comput_slope}.}
                        \label{fig:contours_outer}                            	
                                                    	
\end{figure}

Thus we have arrived at the following explicit representation of $f_P(x)$.  Figure~\ref{fig:pop_obj} in the introduction depicts the graph and the contours of the population objective.

\begin{cor}[Explicit representation of the population objective]{\hfill \\ }
The explicit representation holds:
$$f_{P}(x)=\frac{4}{\pi}\left [
\trace(X)\cdot
\arctan \left
(
\sqrt{\left|\frac{\lambda_{\max}(X)}{\lambda_{\min}(X)}\right|
}\right ) +
\sqrt{|\lambda_{\max}(X)
	\lambda_{\min}(X)|}
\right ] -
\trace(X).$$
\end{cor}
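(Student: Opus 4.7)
The plan is to combine the two main pieces that precede this corollary: the spectral representation from Lemma~\ref{lem:spec_represe}, which writes $f_P(x) = \mathbb{E}_v[|\langle \lambda(X),v\rangle|]$ with $v_i \sim \chi^2_1$ i.i.d., and the closed-form integral evaluation from Lemma~\ref{lem:cray_compute}. The bridge between them is that the matrix $X = xx^T - \bar x\bar x^T$ has rank at most two, so only two of its eigenvalues are ever nonzero, which collapses the $d$-dimensional expectation to a two-dimensional one.

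I would first treat the generic case where $x$ and $\bar x$ are not collinear. Lemma~\ref{lem:rand_eigen} then applies and yields $\lambda_{\max}(X) > 0$, $\lambda_{\min}(X) < 0$, with all intermediate eigenvalues equal to zero. Consequently, $\langle \lambda(X), v\rangle = v_1 \lambda_{\max}(X) + v_d \lambda_{\min}(X)$, and by independence and identical distribution of $v_1, v_d$, Lemma~\ref{lem:cray_compute} applies with $y_1 := \lambda_{\max}(X)$ and $y_2 := \lambda_{\min}(X)$. Combined with the identity $\trace(X) = \lambda_{\max}(X) + \lambda_{\min}(X) = y_1 + y_2$ (again because the remaining eigenvalues vanish), this is exactly the asserted formula; no calculation is required beyond a substitution.

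The remaining case is when $x$ and $\bar x$ are collinear. Then $X = (t^2-1)\bar x\bar x^T$ for some scalar $t$, so $X$ has rank at most one and is either positive or negative semidefinite. One of $\lambda_{\max}(X)$, $\lambda_{\min}(X)$ vanishes, and a direct calculation using $\EE[v_1] = 1$ gives $f_P(x) = |\trace(X)|$. One then checks that the right-hand side of the claimed formula evaluates to the same value: when $\lambda_{\min}(X) = 0$, the term $\arctan\sqrt{|\lambda_{\max}/\lambda_{\min}|}$ is interpreted as $\arctan(+\infty) = \pi/2$, and the geometric mean term vanishes, giving $\frac{4}{\pi}\cdot\trace(X)\cdot\frac{\pi}{2} - \trace(X) = \trace(X)$; the symmetric case $\lambda_{\max}(X) = 0$ is analogous.

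I expect no real obstacle here: once the rank-two structure of $X$ is invoked, the corollary is essentially a one-line substitution into Lemma~\ref{lem:cray_compute}. The only care needed is verifying the boundary (collinear) case, which amounts to interpreting the closed-form expression by continuity.
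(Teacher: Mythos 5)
Your proposal is correct and follows the same route the paper takes: the corollary is stated without a separate proof because it is intended as a direct substitution of Lemma~\ref{lem:cray_compute} into the two-eigenvalue reduction $f_P(x)=\zeta(\lambda_1(X),\lambda_d(X))$ that the paper records just before the corollary, which itself relies on Lemma~\ref{lem:spec_represe} and the rank-two observation (Lemma~\ref{lem:rand_eigen}). Your separate treatment of the collinear boundary case, verifying that the closed form degenerates correctly via $\arctan(\infty)=\pi/2$ (resp.\ $\arctan 0 = 0$) to give $|\trace(X)|$, is care the paper leaves implicit but is a correct and welcome addition.
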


%\subsection{Proof of Theorem~\ref{thm:pop_obj}}

%We are now ready to investigate the stationary points of $f_P(\cdot)$.   We will  consider separately the three qualitatively different regions: 
%\begin{align*}
%\mathcal{R}_1:=\R^d\setminus(\mathcal{R}_2\cup\mathcal %{R}_3)\qquad
%\mathcal{R}_2:=\R\bar x
%\qquad \mathcal{R}_3:=\bar x^{\perp}\setminus\{0\}.
%\end{align*}
%We treat each of these regions in order. %Given a function $f$, we will set 
%$$F(x):=f_{\lambda}(xx^T-\bar x\bar x^T)$$
%throughout.
%To this end, the following lemma shows that $f_P(\cdot)$ has no stationary points in the region $\mathcal{R}_1$.

%\section{Calculations}

%\begin{lem}\label{lem:dist_to_orthogonal}
%Consider the orthogonal decomposition $x = \alpha \bar x + y$. Then 
%\begin{align*}
%\|y\|^2   &\leq \lambda_1(X) = \dotp{U_1, x}^2 - \dotp{U_1,\bar x}^2\\
%(\alpha^2- 1)\|\bar x\|^2 &\geq \lambda_d(X) = \dotp{U_d, x}^2 - \dotp{U_d, \bar x}^2
%\end{align*}
%\end{lem}
%\begin{proof}
%This exploits the following bound: 
%\begin{align*}
%&\|y\|^2\lambda_1(X) \geq y^T Xy = y^T (x(x^Ty) - \bar x (\bar x^T y)) = y^T (x (x^Ty)) = \dotp{x, y}^2 = \dotp{\alpha \bar x + y, y}^2= \|y\|^4.\\
%&\|x\|^2 \lambda_d(X) \leq \bar x^T X\bar x = \dotp{x, \bar x}^2 - \|\bar x\|^2 = \dotp{\alpha \bar x + y, \bar x}^2 - \|\bar x\|^4 = (\alpha^2 - 1)\|\bar x\|^4.
%\end{align*}
%\end{proof}

\subsection{Proof of Theorem~\ref{thm:pop_obj}}\label{sec:proof_5.2}
We next move on to the proof of Theorem~\ref{thm:pop_obj}. Let us first dispense with the easy implication, namely that every point in the set 
\eqref{eqn:stat_set} is indeed stationary for $f_P$; in the process, we will see how the slope $c\approx 0.4416$ arises. Clearly $\pm \bar x$ are minimizers of $f_P$ and are therefore stationary. The chain rule $\partial f_P(x)=\partial \varphi_{\lambda}(X)x$ implies that $x=0$ is stationary as well.
Fix now a point $x\in \bar x^{\perp}\setminus\{0\}$. Observe that the extremal eigenvalues of $X$ are
$$\lambda_1(X)=\|x\|^2\qquad \textrm{and}\qquad \lambda_d(X)=-\|\bar x\|^ 2,$$
with corresponding eigenvectors 
$$e_1:=\frac{x}{\| x\|}\qquad \textrm{and}\qquad e_d:=\frac{\bar x}{\|\bar x\|}.$$
Since $\lambda_1(X)$ and $\lambda_d(X)$ each have multiplicity one, the individual eigenvalue functions $\lambda_1(\cdot)$ and $\lambda_d(\cdot)$ are smooth at $X$ with gradients 
$$\nabla \lambda_1(X)=e_1e_1^T\qquad \textrm{and}\qquad \nabla \lambda_d(X)=e_de_d^T.$$ See for example \cite[Theorem 5.11]{baby_kato}. 
Setting $(y_1,y_2):=(\|x\|^2,-\|\bar x\|^2)$ and applying the chain rule to the decomposition $f_P(x)=\zeta(\lambda_1(X),\lambda_d(X))$ shows
%$$\zeta(\lambda_1(X+Y),\lambda_2(X+Y))=$$
\begin{align*}
\nabla f_P(x)&=\left(\nabla_{y_1}\zeta(y_1,y_2)e_1e_1^T+\nabla_{y_2}\zeta(y_1,y_2)e_de_d^T\right)x=\nabla_{y_1}\zeta(y_1,y_2)x.
\end{align*}
Thus a point $x\in \bar x^{\perp}\setminus\{0\}$ is stationary for $f_P$ if and only if the partial derivative $\nabla_{y_1}\zeta(y_1,y_2)$ vanishes. The points $(y_1,y_2)$ satisfying the equation $0=\nabla_{y_1}\zeta(y_1,y_2)$ trace out exactly the 
line depicted in Figure~\ref{fig:contours_outer}.

	\begin{lem}\label{lem:weird_comput_slope}
		%	Consider the function $$f(\sigma_1,\sigma_2):=\frac{4}{\pi}
		%	\left [
		%	(\sigma_1-\sigma_2)
		%	\arctan \left
		%	(
		%	\sqrt{\frac{\sigma_1}{\sigma_2}
		%	}\right ) +
		%	\sqrt{\sigma_1
		%		\sigma_2}
		%	\right ] -
		%	(\sigma_1-\sigma_2). $$
		The solutions of the equation $0=\nabla_{y_1} \zeta(y_1,y_2)$  on $\R_{++}\times \R_{--}$ are precisely the tuples $\{(c^2y,-y)\}_{y>0}$, where $c>0$ is the unique solution of the equation
		$$\frac{\pi}{4}=\frac{c}{1+c^2}+\arctan\left(c\right).$$
		Note $c\approx 0.4416$.
	\end{lem}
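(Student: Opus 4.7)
The plan is to carry out the partial derivative explicitly using the closed form of $\zeta$ from Lemma~\ref{lem:cray_compute}, and then reduce the vanishing condition to a one-variable equation in the ratio $u:=\sqrt{-y_1/y_2}$. The ratio $u$ is the natural variable here because it already appears inside the arctangent in the formula for $\zeta$, and moreover the square root term simplifies cleanly as $\sqrt{-y_1 y_2}=-y_2 u$ on $\R_{++}\times\R_{--}$. Because the whole expression for $\zeta$ is a sum of a homogeneous-of-degree-one piece in $(y_1,y_2)$ and the $(y_1+y_2)\arctan(u)$ term, one should expect $\nabla_{y_1}\zeta$ to depend only on the scale-free quantity $u$, which is exactly what the statement predicts.

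First I would compute the building blocks via the chain rule. Differentiating $u^2=-y_1/y_2$ gives $\partial u/\partial y_1=-1/(2uy_2)$, from which one obtains
\begin{equation*}
\frac{\partial}{\partial y_1}\arctan(u)=\frac{1}{1+u^2}\cdot\frac{-1}{2uy_2}=\frac{1}{2u(y_1-y_2)},\qquad \frac{\partial}{\partial y_1}\sqrt{-y_1 y_2}=\frac{1}{2u}.
\end{equation*}
Plugging these into $\partial_{y_1}\zeta$ from Lemma~\ref{lem:cray_compute} and grouping terms yields
\begin{equation*}
\nabla_{y_1}\zeta(y_1,y_2)=\frac{4}{\pi}\left[\arctan(u)+\frac{1}{2u}\left(\frac{y_1+y_2}{y_1-y_2}+1\right)\right]-1=\frac{4}{\pi}\left[\arctan(u)+\frac{y_1}{u(y_1-y_2)}\right]-1.
\end{equation*}
Using $y_1=-u^2 y_2$ one finds $y_1-y_2=-(1+u^2)y_2$, so the second bracketed term collapses to $u/(1+u^2)$ and the $(y_1,y_2)$-dependence disappears entirely.

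The equation $\nabla_{y_1}\zeta(y_1,y_2)=0$ is therefore equivalent to
\begin{equation*}
\arctan(u)+\frac{u}{1+u^2}=\frac{\pi}{4},
\end{equation*}
and its solution set in $\R_{++}\times\R_{--}$ is precisely the half-line $\{(c^2 y,-y):y>0\}$, where $c$ is any positive root of the displayed equation. For uniqueness, set $h(c):=\arctan(c)+c/(1+c^2)$; a short calculation gives $h'(c)=2/(1+c^2)^2>0$, so $h$ is strictly increasing with $h(0)=0$ and $h(1)=\pi/4+1/2>\pi/4$, pinning down a unique $c\in(0,1)$. A numerical evaluation then gives $c\approx 0.4416$.

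The only real obstacle is the algebraic manipulation that collapses $\nabla_{y_1}\zeta$ into a function of $u$ alone; once that simplification is performed, the remaining steps are elementary one-variable calculus.
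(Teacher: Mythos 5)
Your proof is correct and follows essentially the same route as the paper's: differentiate the closed form from Lemma~\ref{lem:cray_compute}, substitute $y_1=-c^2y_2$ (your $u$ equals the paper's $c$), observe that the resulting condition is scale-free, and check monotonicity of $c\mapsto \arctan(c)+c/(1+c^2)$. Your explicit computation $h'(c)=2/(1+c^2)^2$ is a slightly more concrete way to see the strict monotonicity that the paper simply asserts, but the substance of the argument is identical.
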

	\begin{proof}
		Differentiating shows that $\omega(c):=\frac{c}{1+c^2}+\arctan\left(c\right)$ is a continuous strictly increasing function on $[0,+\infty)$ with $\omega(0)=0$ and $\lim_{c\to+\infty}\omega(c)=\pi/2$. Hence the equation $\pi/4=\omega(c)$ has a unique solution in the set $(0,\infty)$. 
		A short computation yields the expression 
		$$\nabla_{y_1} \zeta(y_1,y_2)=\frac{4}{\pi}\left(\frac{y_1+y_2}{2\sqrt{-y_1/y_2}(y_1-y_2)}-\frac{y_2}{2\sqrt{-y_1y_2}}+\arctan\left(\sqrt{-\frac{y_1}{y_2}}\right)\right)-1.$$
		Set $y_1=-c^2y_2$ for some $c>0$ and $y_2<0$. Then plugging in this value of $y_1$,  equality $0=\nabla_{y_1} \zeta(y_1,y_2)$ holds  if and only if
		$$\pi/4=\left(\frac{c}{1+c^2}+\arctan\left(c\right)\right).$$
		This equation is independent of $y_1$ and its solution in $c$ is exactly the value satisfying $\pi/4=\omega(c)$.
	\end{proof}

%
%$$\zeta(y_1,y_2):=\frac{4}{\pi}
%\left [
%(y_1+y_2)
%\arctan \left
%(
%\sqrt{-\frac{y_1}{y_2}
%}\right ) +
%\sqrt{-y_1
%	y_2}
%\right ] -
%(y_1+y_2)  $$
%for $(y_1,y_2)\in \R_+\times \R_{-}$.

Thus we have proved the following. 
\begin{proposition}\label{prop:one_dir}
Let $c>0$  be the unique solution of the equation
 $\frac{\pi}{4}=\frac{c}{1+c^2}+\arctan\left(c\right)$. 
Then a point $x\in \bar x^{\perp}\setminus\{0\}$ is stationary for $f_P$ if and only if equality $\| x\|=c\|\bar x\|$ holds.
\end{proposition}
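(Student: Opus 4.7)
The plan is to simply combine the chain rule computation that precedes the proposition with the explicit formula for the zero set of $\nabla_{y_1}\zeta$ established in Lemma~\ref{lem:weird_comput_slope}, and verify that $f_P$ is in fact differentiable at the points in question so that stationarity collapses to a gradient condition.

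First, I would observe that when $x \in \bar{x}^{\perp} \setminus \{0\}$, the matrix $X = xx^T - \bar{x}\bar{x}^T$ has the two nonzero eigenvalues $\lambda_1(X) = \|x\|^2$ and $\lambda_d(X) = -\|\bar{x}\|^2$, each of multiplicity one (all other eigenvalues are zero), with $x$ and $\bar{x}$ not collinear. Since the extremal eigenvalues are simple, the maps $\lambda_1$ and $\lambda_d$ are $C^1$ in a neighborhood of $X$, and the middle eigenvalues $\lambda_i(X) = 0$ for $2 \le i \le d-1$ play no role in the value $f_P(x) = \zeta(\lambda_1(X), \lambda_d(X))$ because $\zeta(y_1, y_2)$ depends only on its two arguments. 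Consequently $f_P$ is $C^1$ near such $x$, and the subdifferential $\partial f_P(x)$ reduces to $\{\nabla f_P(x)\}$, so the stationarity condition $0 \in \partial f_P(x)$ becomes the equation $\nabla f_P(x) = 0$.

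Second, I would invoke the chain rule computation already displayed in the paragraph preceding the proposition: setting $(y_1, y_2) := (\|x\|^2, -\|\bar{x}\|^2)$,
\begin{equation*}
\nabla f_P(x) = \bigl(\nabla_{y_1}\zeta(y_1,y_2)\, e_1 e_1^T + \nabla_{y_2}\zeta(y_1,y_2)\, e_d e_d^T\bigr)\, x = \nabla_{y_1}\zeta(y_1, y_2)\, x,
\end{equation*}
where the last equality uses $e_1 = x/\|x\|$ and $e_d = \bar{x}/\|\bar{x}\|$ together with $\langle x, \bar{x}\rangle = 0$. Because $x \neq 0$, this gradient vanishes if and only if $\nabla_{y_1}\zeta(\|x\|^2, -\|\bar{x}\|^2) = 0$.

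Third, I would apply Lemma~\ref{lem:weird_comput_slope} to the pair $(y_1, y_2) = (\|x\|^2, -\|\bar{x}\|^2) \in \R_{++} \times \R_{--}$: the lemma says the vanishing locus of $\nabla_{y_1}\zeta$ on this quadrant consists exactly of tuples of the form $(c^2 y, -y)$ with $y > 0$, where $c$ is the claimed constant. Matching coordinates gives $\|x\|^2 = c^2 \|\bar{x}\|^2$, i.e.\ $\|x\| = c\|\bar{x}\|$. This establishes both directions of the equivalence. I do not anticipate a genuine obstacle here since all of the technical work has been done upstream; the only thing to be careful about is the justification that $f_P$ is smooth (equivalently, that $\partial f_P(x)$ is a singleton) at points $x \in \bar{x}^{\perp} \setminus \{0\}$, which follows from the simplicity of the extremal eigenvalues of $X$.
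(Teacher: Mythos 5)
Your proof is correct and follows essentially the same route as the paper: the paragraph preceding the proposition already performs the chain-rule calculation yielding $\nabla f_P(x)=\nabla_{y_1}\zeta(\|x\|^2,-\|\bar x\|^2)\,x$, and Lemma~\ref{lem:weird_comput_slope} closes the argument. You add a welcome clarification that the simplicity of $\lambda_1(X)$ and $\lambda_d(X)$ makes $f_P$ differentiable at such $x$ (so $\partial f_P(x)$ is a singleton), a point the paper uses implicitly by writing $\nabla f_P(x)$.
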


In particular, we have proved one implication in Theorem~\ref{thm:pop_obj}.
 To prove the converse, we must show that every stationary point of $f_P$ lies in the set \eqref{eqn:stat_set}. Various approaches are possible based  either on the decomposition $f_P(x)=\varphi_{\lambda}(X)$ or $f_P(x)=\zeta(\lambda_1(X),\lambda_d(X)).$ We will focus on the former. We will prove a strong result about the location of stationary points of arbitrary convex spectral functions of $X$. 
 Indeed, it will be more convenient to consider the more abstract setting as follows.

%
% Then Claim~\ref{claim:weird_comput_slope} shows that $0=\nabla_1\psi(\|x\|^2,0,\ldots,0,-\|\bar x\|^2)$. It follows from \cite[Theorem 1.1]{der} that $\psi_{\lambda}$ is differentiable at $x$ with 
%$$\nabla \psi_{\lambda}(X)=U\Diag(\nabla \psi(\lambda(X)))U^T,$$
%where $U\in \mathbb{O}^d$ satisfies $X=U\Diag(\lambda(X))U^T$. Notice since $x$ and $\bar x$ are perpendicular, we have $x\in \spann\{U_1\}$ and $\bar x\in \spann\{U_d\}$.
%We therefore deduce 
%$$\nabla \psi_{\lambda}(X)x=U\begin{bmatrix} 0 &0\\
%0 &\otimes \end{bmatrix}\begin{bmatrix} \pm\|x\|^2 \\ 0
%\end{bmatrix}=0.$$
%Thus $x$ is stationary for $f_P$.

 %Most of the argument relies on the problem structure only in that $f_P$ is a convex spectral function of $X=xx^T-\bar x\bar x^T$. 
 
Throughout, we fix a symmetric convex  function $f\colon\R^d\to\R$ and a point $0\neq \bar x\in\R^d$, and define the function 
$$g(x):=f_\lambda(xx^T-\bar x\bar x^T).$$ 
%Fix a point $x\in \R^d$ and set $X := xx^T - \bar x \bar x^T$. 
Note, the population objective $f_P$ has this representation with $f=\varphi$.
The chain rule directly implies $$\partial g(x)=\partial f_{\lambda}(X)x.$$ 
Therefore, using Theorem~\ref{thm:char_spec_sbdiff} let us also fix a matrix $V\in\partial f_{\lambda}(X)$ and 
a matrix $U\in \mathbb{O}^d$ satisfying
$$\lambda(V)\in \partial f(\lambda(X)),\qquad V=U(\Diag (\lambda(V))U^T,\qquad \textrm{ and }\qquad X = U \Diag(\lambda(X)) U^T.$$

The following  two elementary lemmas will form the core of the argument.

\begin{lem}[Eigenvalue correlation]\label{lem:correlation} {\hfill \\ }  %Consider an eigenvalue decomposition $X = U\Diag(\lambda(X)) U^T$. 
	The following are true. 
\begin{enumerate}
\item\label{claim:1_corr} \textbf{Eigenvalues.} We have $\lambda_i(X) = \dotp{U_i, x}^2 - \dotp{U_i, \bar x}^2$ for $i \in \{1, d\}$, and consequently  
\begin{equation}
\begin{aligned} \label{eq: eigenvalue_bound}
0 \le \lambda_1(X) &\leq \dotp{U_1, x}^2 \leq \|x\|^2 \\
0 \le -\lambda_d(X) &\leq  \dotp{U_d, \bar x}^2 \leq  \|\bar x\|^2.
\end{aligned}
\end{equation}
\item\label{claim:2_corr} \textbf{Anticorrelation.} Equality holds:
\begin{align*}
 \dotp{U_1, x}\dotp{U_d, x} =  \dotp{U_1, \bar x} \dotp{U_d, \bar x}.
\end{align*}
%\item \textbf{Correlation.}
%$$
%\frac{\dotp {\bar x, x}^2}{\|x\|^2 \|\bar x\|^2}  = \frac{\dotp{U_d,  x}^2 + \dotp{U_1, \bar x}^2 }{\|\bar x\|^2}  + \frac{\left[ \lambda_1(X) - \lambda_d(X)\right]\dotp{U_d, x}^2}{\|\bar x\|^2 \|x\|^2}
%$$
\item\label{claim:3_corr} \textbf{Correlation.} Provided $x\notin \{\pm\bar x\}$, we have $\spann\{x,\bar x\}\subset\spann\{U_1,U_d\}$ and 
$$
\dotp{x, \bar x} = \dotp{U_1, x}\dotp{U_1, \bar x} + \dotp{ U_d, x}\dotp{U_d, \bar x}.
$$
\end{enumerate}
%In particular, if $\dotp{U_d, x} = 0$, then $\dotp{x, \bar x} = 0$.
\end{lem}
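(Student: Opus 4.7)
The plan is to extract all three claims from a single master identity. Since $U$ simultaneously diagonalizes $X$, we have $U_i^T X U_j = \lambda_i(X)\,\delta_{ij}$ for all indices $i, j$. Substituting $X = xx^T - \bar x \bar x^T$ yields
\begin{equation*}
\langle U_i, x\rangle \langle U_j, x\rangle - \langle U_i, \bar x\rangle \langle U_j, \bar x\rangle = \lambda_i(X)\,\delta_{ij},
\end{equation*}
and each claim will be a specialization or direct consequence.

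For Claim~\ref{claim:1_corr}, setting $i = j \in \{1, d\}$ reproduces the stated expressions for $\lambda_1(X)$ and $\lambda_d(X)$. Sign control comes from Cauchy--Schwarz: the inequality $x^T X x = \|x\|^4 - \langle x, \bar x\rangle^2 \geq 0$ (and its dual for $\bar x$) forces $\lambda_1(X) \geq 0 \geq \lambda_d(X)$; alternatively one can invoke Lemma~\ref{lem:rand_eigen}. The sandwich estimates then drop out of the master identity using $\langle U_1, \bar x\rangle^2 \geq 0$, $\langle U_d, x\rangle^2 \geq 0$, with the outer bounds supplied by unit-norm of $U_1$ and $U_d$. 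Claim~\ref{claim:2_corr} is a one-line specialization at $(i,j) = (1, d)$, where the right-hand side vanishes.

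For Claim~\ref{claim:3_corr}, the key geometric fact is the inclusion $\spann\{x, \bar x\} \subseteq \spann\{U_1, U_d\}$; once this is established, the correlation formula drops out by expanding $x$ and $\bar x$ in the orthonormal basis $\{U_1, U_d\}$ of the containing subspace and taking the inner product. To prove the inclusion, I will show $\langle U_i, x\rangle = \langle U_i, \bar x\rangle = 0$ for every $i \in \{2, \ldots, d-1\}$. For such $i$, the eigenvalue $\lambda_i(X)=0$ (by Lemma~\ref{lem:rand_eigen} in the non-collinear case and by a direct rank count in the collinear subcase, since $X$ has rank at most one there), so $X U_i = 0$ unpacks to $\langle x, U_i\rangle x = \langle \bar x, U_i\rangle \bar x$. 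When $x, \bar x$ are linearly independent, this identity immediately forces both scalars to vanish. When $x = \alpha \bar x$ with $\alpha \neq \pm 1$, substituting and using $\bar x \neq 0$ yields $(\alpha^2 - 1)\langle \bar x, U_i\rangle \bar x = 0$, so $\langle \bar x, U_i\rangle = 0$ and hence $\langle x, U_i\rangle = 0$ as well.

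The only subtle point is the collinear subcase in Claim~\ref{claim:3_corr}, where the individual eigenvectors $U_i$ associated with the zero eigenvalue are not uniquely determined; the argument above is indifferent to this ambiguity because it uses only properties of the span $\spann\{U_1, U_d\}$, not the particular choice of eigenvectors.
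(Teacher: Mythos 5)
Your overall strategy mirrors the paper's — both hinge on the identity $U_i^T X U_j = \lambda_i(X)\delta_{ij}$ and the eigenvector relation $XU_i = \lambda_i(X) U_i$. Claim~2 is identical. For Claim~3 you argue from the opposite side: instead of showing $U_1, U_d \in \spann\{x,\bar x\}$ (which the paper does by dividing $XU_i = \lambda_i(X)U_i$ by the nonzero eigenvalue for $i\in\{1,d\}$), you show $x,\bar x \perp U_i$ for the middle indices $i\in\{2,\dots,d-1\}$, which gives the same inclusion $\spann\{x,\bar x\}\subset\spann\{U_1,U_d\}$; both routes are sound and roughly equal in length, and your handling of the collinear subcase via $(\alpha^2-1)\langle\bar x,U_i\rangle\bar x=0$ is correct.

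There is, however, a genuine error in your primary argument for the sign control in Claim~1. You assert $x^T X x = \|x\|^4 - \langle x,\bar x\rangle^2 \geq 0$ ``by Cauchy--Schwarz,'' but Cauchy--Schwarz gives $\langle x,\bar x\rangle^2 \le \|x\|^2\|\bar x\|^2$, which only controls the sign of $x^T X x$ when $\|x\|\ge\|\bar x\|$. The claim is simply false in general: with $x=(1/2,0)$ and $\bar x=(1,1)$ one gets $x^T X x = 1/16 - 1/4 < 0$. The same problem afflicts the ``dual for $\bar x$.'' A correct quick argument in the same spirit is to evaluate the quadratic form at any unit vector $v\perp\bar x$, giving $v^T X v = \langle v,x\rangle^2 \geq 0$ and hence $\lambda_1(X)\geq 0$ (and at a unit $v\perp x$ for $\lambda_d(X)\le 0$). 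Your fallback — invoking Lemma~\ref{lem:rand_eigen} — does give the right sign in the non-collinear case, and in the collinear case $X$ has rank at most one so the sign of the extreme eigenvalues is immediate from the presence of a zero eigenvalue; that is the route the paper intends. Just replace the erroneous Cauchy--Schwarz step with one of these.
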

\begin{proof} 
%	We first observe by Lemma~\ref{lem:rand_eigen} that
%  $\lambda_1(X) > 0$ and $\lambda_d(X) < 0$ when $x$ and $\bar x$ are
%  not collinear. On the other hand when $x$ and $\bar x$ are collinear, a quick calculation shows that the
%  matrix $xx^T-\bar x \bar x^T$ has only a single nonzero
%  eigenvalue. Therefore, we know $\lambda_1(X) \ge 0$ and
%  $\lambda_d(X) \le 0$. %To deduce Part I, we use that $U_1$ and $U_d$
%  are eigenvectors of $X$ with corresponding eigenvalues
%  $\lambda_1(X)$ and $\lambda_d(X)$ respectively. 
From the eigenvalue decomposition, we obtain
\begin{align*}
\lambda_1(X) &= U_1^T X U_1 =  \dotp{U_1, x}^2 - 
  \dotp{U_1, \bar x}^2  \\
\lambda_d(X) &= U_d^T X U_d =  \dotp{U_d, x}^2 - 
                         \dotp{U_d, \bar x}^2 .
\end{align*}
Taking into account that always $\lambda_1(X)\geq 0$ and $\lambda_1(X)\leq 0$ (Lemma~\ref{lem:rand_eigen}), we conclude $\lambda_1(X) \le
\dotp{U_1,x}^2$ and $\lambda_d(X) \ge
-\dotp{U_d,\bar x}^2$. Claim~\ref{claim:1_corr} follows.
%We apply Cauchy Schwarz to give the desired result for \eqref{eq:
 % eigenvalue_bound}. 
For Claim \ref{claim:2_corr}, simply observe
\begin{align*}
0 = U_d^T X U_1 = \dotp{U_1,x} \dotp{U_d, x}
  - \dotp{U_1, \bar x} \dotp{U_d, \bar x}. 
\end{align*}
To see Claim \ref{claim:3_corr}, for each $i\in \{1,d\}$  notice 
$$\langle U_i,x\rangle x -\langle U_i,\bar x\rangle \bar x=XU_i=\lambda_i(X)U_i.$$
Suppose $x\notin \{\pm\bar x\}$. Then if $x$  and $\bar x$ are not collinear, we may divide through by $\lambda_i(X)$ and deduce, $\spann\{U_1,U_d\}=\spann\{x,\bar x\}$. On the other hand, if $x$ and $\bar x$ are collinear, then exactly one $\lambda_1$ or $\lambda_d$ is nonzero, and then $x$ lies in the span of the corresponding column of $U$. In either case, we may write $x = \dotp{U_1, x} U_1 + \dotp{U_d, x} U_d$
and $\bar x = \dotp{U_1, \bar x} U_1 + \dotp{U_d, \bar x} U_d$ in terms
of their orthogonal expansions. We deduce 
$$
\dotp{x, \bar x} = \dotp{\dotp{U_1, x} U_1 + \dotp{U_d, x} U_d, \dotp{U_1, \bar x} U_1 + \dotp{U_d, \bar x} U_d} = \dotp{U_1, x}\dotp{U_1, \bar x} + \dotp{ U_d, x}\dotp{U_d, \bar x},
$$
as claimed.
\end{proof}

%Part 3: 
%\begin{align*}
%x \|x\|^2 -  \bar x \dotp {\bar x, x} =  (xx^T - \bar x \bar x^T) x &= (xx^T - \bar x \bar x^T)\left[\dotp{U_1, x} U_1 + \dotp{U_d, x}U_d\right]\\
%&= \left[\lambda_1(X) \dotp{U_1, x} U_1 + \lambda_d(X)\dotp{U_d, x}U_d\right]\\
%&= \left[ \lambda_1(X) x  + \left[\lambda_d(X) - \lambda_1(X)\right]\dotp{U_d, x}U_d\right]
%\end{align*}
%Thus, we have that 
%\begin{align*}
%\left[\|x\|^2 - \lambda_1(X) \right] x  &= \bar x \dotp {\bar x, x} +  \left[ \lambda_d(X) - \lambda_1(X)\right]\dotp{U_d, x}U_d \\
%\implies& \left[\|x\|^2 - \lambda_1(X) \right] \|x\|^2 = \dotp {\bar x, x}^2 + \left[ \lambda_d(X) - \lambda_1(X)\right]\dotp{U_d, x}^2\\
%\implies& \frac{\dotp {\bar x, x}^2}{\|x\|^2 \|\bar x\|^2}  = \frac{\left[\|x\|^2 - \lambda_1(X) \right]}{\|\bar x\|^2}  + \frac{\left[ \lambda_1(X) - \lambda_d(X)\right]\dotp{U_d, x}^2}{\|\bar x\|^2 \|x\|^2}
%\end{align*}
%Notice that 
%$$
%\lambda_1(X)  = \dotp{U_1,x}^2 - \dotp{U_1, \bar x}^2.
%$$
%Therefore, 
%\begin{align*}
%\implies& \frac{\dotp {\bar x, x}^2}{\|x\|^2 \|\bar x\|^2}  = \frac{\left[\|x\|^2 - \dotp{U_1, x}^2+ \dotp{U_1, \bar x}^2 \right]}{\|\bar x\|^2}  + \frac{\left[ \lambda_1(X) - \lambda_d(X)\right]\dotp{U_d, x}^2}{\|\bar x\|^2 \|x\|^2}\\
%&  \frac{\dotp {\bar x, x}^2}{\|x\|^2 \|\bar x\|^2}  = \frac{\dotp{U_d,  x}^2 + \dotp{U_1, \bar x}^2 }{\|\bar x\|^2}  + \frac{\left[ \lambda_1(X) - \lambda_d(X)\right]\dotp{U_d, x}^2}{\|\bar x\|^2 \|x\|^2}
%\end{align*}

\begin{lem}[Spectral subdifferential]\label{lem:lower_bound_derivative}
%Consider a symmetric convex  function $f\colon\R^d\to\R$, fix a point $\bar x\in\R^d$, and define the function 
%		$$g(x):=f_\lambda(xx^T-\bar x\bar x^T).$$ 
		%Fix a point $x\in \R^d$ and set $X := xx^T - \bar x \bar x^T$. 
	%	Then the subdifferential $\partial g(x)$ consists of all vectors $Vx$ with
%		$V\in\partial f_{\lambda}(X)$. Consequently,
	%	there exists a matrix $U\in \mathbb{O}^d$ satisfying
	%	$$V=U(\Diag (\lambda(V))U^T\qquad \textrm{ %and }\qquad X = U \Diag(\lambda(X)) U^T,$$ 
	%	with $\lambda(V)\in \partial f(\lambda(X))$.
	The following hold:
\begin{align}\label{eq:gradient_lower_bound}
    &\max\left\{|\lambda_1(V)\dotp{U_1, x}|, |\lambda_d(V)\dotp{U_d, x}|\right\} \leq \|Vx\|, 
\end{align}
and
\begin{align}\label{eq:correlation_inequality}
   g(x) -g(\bar x) \leq \lambda_1(V) \lambda_1(X) + \lambda_d(V) \lambda_d(X).
\end{align}
%\frac{g(x)}{ \sqrt{\frac{\lambda_1(X)^2}{\dotp{U_1, x}^2} +  \frac{\lambda_d(X)^2}{\dotp{U_d, x}^2}}}
\end{lem}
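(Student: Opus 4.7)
The plan is to treat the two inequalities separately, each of which admits a short spectral argument.

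For inequality~\eqref{eq:gradient_lower_bound}, I would exploit the simultaneous ordered spectral decomposition $V = U\Diag(\lambda(V))U^T$ directly. Writing $Vx = U\Diag(\lambda(V))U^T x$, the $i$-th coordinate of $U^T(Vx) = \Diag(\lambda(V))U^T x$ is exactly $\lambda_i(V)\langle U_i,x\rangle$. Since $U$ is orthogonal,
\[
\|Vx\|^2 \;=\; \sum_{i=1}^d \lambda_i(V)^2 \langle U_i,x\rangle^2,
\]
which immediately bounds the magnitude of each individual summand $\lambda_i(V)\langle U_i,x\rangle$ by $\|Vx\|$, in particular for $i=1$ and $i=d$.

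For inequality~\eqref{eq:correlation_inequality}, the key ingredients are the subgradient inequality for the symmetric function $f$ and the rank bound on $X$. First, $g(\bar x) = f_\lambda(\bar x \bar x^T - \bar x \bar x^T) = f_\lambda(0) = f(0)$. Next, Theorem~\ref{thm:char_spec_sbdiff} gives $\lambda(V) \in \partial f(\lambda(X))$, so convexity of $f$ yields
\[
f(0) \;\geq\; f(\lambda(X)) + \langle \lambda(V),\,0 - \lambda(X)\rangle,
\]
which rearranges to $g(x)-g(\bar x) = f(\lambda(X)) - f(0) \leq \langle \lambda(V),\lambda(X)\rangle = \sum_{i=1}^d \lambda_i(V)\lambda_i(X)$.

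Finally, I would collapse this sum to just the $i=1$ and $i=d$ terms using the rank structure of $X = xx^T - \bar x\bar x^T$. Since $\mathrm{rank}(X)\leq 2$, at most two eigenvalues of $X$ are nonzero; by Lemma~\ref{lem:rand_eigen} (together with the collinear case handled directly), the only possibly nonzero entries in the ordered spectrum are $\lambda_1(X)\geq 0$ and $\lambda_d(X)\leq 0$, while $\lambda_i(X)=0$ for all $2\leq i\leq d-1$. Substituting into the displayed inequality gives exactly \eqref{eq:correlation_inequality}. No step here looks delicate; the only thing to be careful about is invoking the correct form of Theorem~\ref{thm:char_spec_sbdiff} and remembering that the eigenvalue vector is ordered nonincreasingly so that the middle coordinates of $\lambda(X)$ genuinely vanish.
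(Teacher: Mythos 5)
Your proposal is correct and takes essentially the same approach as the paper. The only cosmetic difference is that for \eqref{eq:correlation_inequality} you apply the subgradient inequality to the symmetric function $f$ at $\lambda(X)$ (giving $\langle \lambda(V),\lambda(X)\rangle$) whereas the paper applies it directly to $f_\lambda$ at $X$ (giving $\langle V,X\rangle$); these are the same quantity by the simultaneous ordered spectral decomposition, and both then collapse to the $i=1,d$ terms using the rank structure of $X$.
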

\begin{proof}
%As in Lemma~\ref{lem:correlation}, we have $\lambda_d(X) \le 0 \le
%\lambda_1(X)$ and $\lambda_i(X)=0$ for all intermediate indices $i$.
% Lemma~\ref{lem:rand_eigen} shows $\lambda_d(X)<0<\lambda_1(X)$ and $\lambda_i(X)=0$ for all intermediate indices $i$.
%		The chain rule implies the equality $\partial g(x)= \partial f_{\lambda}(X)x.$ % Choose now a matrix $V\in \partial f_{\lambda}(X)$ satisfying $0=Vx$. 
%                 Theorem~\ref{thm:char_spec_sbdiff} shows that for any 
%                 matrix $V\in \partial f_{\lambda}(X)$,  there exists an orthogonal matrix $U$  satisfying
%		\begin{equation}\label{eqn:weird_decomp}
%		V=U(\Diag\,  \lambda(V))U^T \qquad \textrm{ and }\qquad X=U\Diag(\lambda(X))U^T,
%		\end{equation}
%		and the inclusion $\lambda(V)\in \partial f(\lambda(X))$ holds. 
	
		To see~\eqref{eq:gradient_lower_bound}, observe that for all unit vectors $z \in \mathbb{S}^{d-1}$, we have $\|Vx\| \geq \dotp{z, Vx}$. Thus, testing against all $z \in \{\pm U_1, \pm U_d\}$ yields the lower bounds \eqref{eq:gradient_lower_bound}. %$\|Vx\| \geq \max\left\{|\lambda_1(V)\dotp{U_1, x}|, |\lambda_d(V)\dotp{U_d, x}|\right\} $. 
		To prove the final bound \eqref{eq:correlation_inequality}, we exploit the convexity of $f_\lambda$. The subgradient inequality implies
\begin{align*}  f_\lambda (X) - f_\lambda (0) &\leq \dotp{V, X} 
%= \mathrm{Tr}(U \Diag(V) U^T (U\Diag(\lambda(X)) U^T) \\
= \lambda_1(V) \lambda_1(X) + \lambda_d(V) \lambda_d(X).
\end{align*}
The result follows.
%which shows that the eigenvectors of $V$ and $X$ are positively correlated. 
%Therefore
%\begin{align*}%\label{eq:correlation}
%g(x) \leq \lambda_1(V) \lambda_1(X) + \lambda_2(V) \lambda_2(X).
%\end{align*}
%Now assume that $x$ and $\bar x$ are neither collinear nor orthogonal. In this case, we have that $\dotp{U_1, x} \neq 0$ and $\dotp{U_d, x} \neq 0$. Thus, the unit-vector
%$$
%z = \frac{1}{\sqrt{ \frac{\lambda_1(X)^2}{\dotp{U_1, x}^2} +  \frac{\lambda_d(X)^2}{\dotp{U_d, x}^2}}}\left[ \frac{\lambda_1(X)}{\dotp{U_1, x}} U_1 + \frac{\lambda_d(X)}{\dotp{U_d, x}} U_d\right],
%$$
%is well-defined. Therefore, 
%\begin{align*}
%\|Vx\| \geq \dotp{z, Vx} &= \lambda_1(V) \dotp{U_1, x} \dotp{U_1, z} + \lambda_d(V)\dotp{U_d, x} \dotp{U_d, z} \\
%&= \frac{1}{ \sqrt{\frac{\lambda_1(X)^2}{\dotp{U_1, x}^2} +  \frac{\lambda_d(X)^2}{\dotp{U_d, x}^2}}}\left[ \lambda_1(V)\lambda_1(X) + \lambda_d(V) \lambda_d(X)\right]\\
%&\geq  \frac{g(x)}{ \sqrt{\frac{\lambda_1(X)^2}{\dotp{U_1, x}^2} +  \frac{\lambda_d(X)^2}{\dotp{U_d, x}^2}}}.
%\end{align*}
\end{proof}

The following corollary follows quickly from the previous two lemmas. 

\begin{cor}[Stationary point inclusion]\label{cor:find_stat}
%Assume the conditions of Lemma~\ref{lem:lower_bound_derivative}. 
Suppose
that $x$ is stationary for $g$, that is $Vx=0$. Then one of the following conditions holds:
\begin{enumerate}
\item\label{it:1_cor512} $g(x) \leq g(\bar x)$
\item\label{it:2_cor512} $x = 0$
\item\label{it:3_cor512} $\dotp{x , \bar x} = 0$, $\lambda_1(V) = 0$. %, \textcolor{red}{, and $\|x\| < \|\bar x\| + g(x)/\lambda_d(V) < \|\bar x\|$.
%	} 
\end{enumerate}
Moreover, if $\bar x$ minimizes $g$, then a point $x$ is stationary for $g$ if and only if $x$ satisfies \ref{it:1_cor512}, \ref{it:2_cor512}, or \ref{it:3_cor512}.
\end{cor}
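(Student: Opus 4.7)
The plan is to deduce each conclusion of the corollary directly from the two lemmas above. By the chain rule $\partial g(x) = 2\,\partial f_\lambda(X)\,x$, stationarity of $x$ is equivalent to the existence of some $V \in \partial f_\lambda(X)$ (paired with $X$ via a simultaneous ordered spectral decomposition, as in the standing setup) satisfying $Vx=0$. Substituting $\|Vx\|=0$ into \eqref{eq:gradient_lower_bound} forces, for each $i\in\{1,d\}$, the dichotomy
\[
\lambda_i(V)=0\quad\text{or}\quad \langle U_i,x\rangle=0.
\]
This yields four sub-cases, which I would dispatch as follows.

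If $\lambda_1(V)=\lambda_d(V)=0$, then \eqref{eq:correlation_inequality} immediately gives $g(x)\le g(\bar x)$, i.e.\ case~\ref{it:1_cor512}. If instead $\langle U_1,x\rangle=\langle U_d,x\rangle=0$, then $x\perp\spann\{U_1,U_d\}$; but Claim~\ref{claim:3_corr} of Lemma~\ref{lem:correlation} places $x\in\spann\{U_1,U_d\}$ whenever $x\notin\{\pm\bar x\}$, and combining the two inclusions gives $x=0$. Hence either $x=0$ (case~\ref{it:2_cor512}) or $x=\pm\bar x$, which satisfies $g(x)=g(\bar x)=f(0)$, i.e.\ case~\ref{it:1_cor512}.

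The two remaining ``mixed'' sub-cases are handled by combining anticorrelation (Claim~\ref{claim:2_corr}) with the sign constraints $\lambda_1(X)\ge 0\ge \lambda_d(X)$ from Claim~\ref{claim:1_corr}. Suppose first $\lambda_1(V)=0$ and $\langle U_d,x\rangle=0$. Anticorrelation forces $\langle U_1,\bar x\rangle\langle U_d,\bar x\rangle=0$: in the branch $\langle U_d,\bar x\rangle=0$ one gets $\lambda_d(X)=0$, so \eqref{eq:correlation_inequality} yields case~\ref{it:1_cor512}; in the branch $\langle U_1,\bar x\rangle=0$, Claim~\ref{claim:3_corr} gives $\langle x,\bar x\rangle=0$, and since $\lambda_1(V)=0$ by assumption we land in case~\ref{it:3_cor512}. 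The symmetric mixed sub-case $\lambda_d(V)=0$, $\langle U_1,x\rangle=0$ proceeds analogously: the branch $\langle U_1,\bar x\rangle=0$ forces $\lambda_1(X)=0$, giving case~\ref{it:1_cor512}; the branch $\langle U_d,\bar x\rangle=0$ combines with the constraint $\lambda_d(X)\le 0$ to force $\langle U_d,x\rangle=0$ as well, collapsing to the already-handled fully-vanishing sub-case.

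For the moreover statement, the converse implications are routine. Case~\ref{it:1_cor512} makes $x$ a global minimizer and hence stationary; case~\ref{it:2_cor512} trivially satisfies $0\in\partial g(0)$; and in case~\ref{it:3_cor512}, assuming $x\ne 0$, orthogonality of $x$ and $\bar x$ makes $\lambda_1(X)=\|x\|^2>0$ a simple eigenvalue with eigenvector $\pm x/\|x\|=U_1$, so the simultaneous ordered decomposition gives $VU_1=\lambda_1(V)U_1=0$ and hence $Vx=0$. The only real obstacle in the forward direction is the bookkeeping of the four sub-cases and remembering to invoke the eigenvalue-sign constraints of Claim~\ref{claim:1_corr} to eliminate spurious branches; everything else is a direct application of \eqref{eq:gradient_lower_bound}, \eqref{eq:correlation_inequality}, and the correlation identities in Lemma~\ref{lem:correlation}.
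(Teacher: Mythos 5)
Your proposal is correct and rests on the same two lemmas the paper uses (the correlation identities of Lemma~\ref{lem:correlation} and the bounds \eqref{eq:gradient_lower_bound}, \eqref{eq:correlation_inequality} of Lemma~\ref{lem:lower_bound_derivative}), together with the same dichotomy extracted from $Vx=0$, namely that for each $i\in\{1,d\}$ either $\lambda_i(V)=0$ or $\dotp{U_i,x}=0$. The difference is organizational: the paper assumes cases~\ref{it:1_cor512} and~\ref{it:2_cor512} fail, establishes $\lambda_1(V)=0$ by contradiction, and then derives case~\ref{it:3_cor512}; you instead enumerate the four sub-cases forward. Your enumeration contains one redundancy worth noting: in the sub-case $\lambda_d(V)=0$, $\dotp{U_1,x}=0$, the sign constraint $0\le\lambda_1(X)=\dotp{U_1,x}^2-\dotp{U_1,\bar x}^2=-\dotp{U_1,\bar x}^2\le 0$ forces $\lambda_1(X)=0$ immediately, so \eqref{eq:correlation_inequality} gives case~\ref{it:1_cor512} without any anticorrelation branch split. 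One small gap to flag: in your sub-case $\lambda_1(V)=0$, $\dotp{U_d,x}=0$, you invoke the orthogonal-expansion formula for $\dotp{x,\bar x}$ from Claim~\ref{claim:3_corr}, which is stated only for $x\notin\{\pm\bar x\}$; you should say explicitly (as you do in the fully-orthogonal sub-case) that if $x=\pm\bar x$ then $g(x)=g(\bar x)$ and case~\ref{it:1_cor512} holds trivially, so the exclusion is harmless. Finally, your chain rule $\partial g(x)=2\,\partial f_\lambda(X)\,x$ carries the correct factor of $2$ that the paper drops; this is immaterial for locating zeros of the subdifferential but worth being aware of.
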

\begin{proof}
Suppose $Vx=0$ and that the first two conditions fail, that is $x\neq 0$ and $g(x)> g(\bar x)$. We will show that the third condition holds. To this end, inequalities \eqref{eq:gradient_lower_bound} and  \eqref{eq:correlation_inequality}, along with Lemma~\ref{lem:correlation}, directly imply the following:
\begin{align}
0 < g(x)-g(\bar x) &\leq \lambda_1(V) \lambda_1(X) + \lambda_d(V) \lambda_d(X),\label{eqn:proof_ineq_corr}
\\
\lambda_1(V)\dotp{U_1, x} &= \lambda_d(V)\dotp{U_d, x} = 0,\label{eqn:proof_alt} \\
x&=\dotp{U_1,x}U_1+\dotp{U_d,x}U_d.\label{eqn:proof_alt3}
\end{align}
% To this end, observe that
% \eqref{eq:correlation_inequality} implies  that either $\lambda_1(V) \neq 0$ or $\lambda_d(V) \neq 0$. Taking into account \eqref{eq:gradient_lower_bound}, we deduce	
%If $x$ is stationary and $x \notin \{\pm \bar x, 0\}$, then either $\lambda_1(V) \neq 0$ or $\lambda_d(V) \neq 0$ (see the correlation inequality~\eqref{eq:correlation_inequality}).
%$\lambda_1(V)\dotp{U_1, x} = \lambda_d(V)\dotp{U_d, x} = 0$.
Aiming towards a contradiction, suppose  $\lambda_1(V) \neq 0$. Then  \eqref{eqn:proof_alt} and \eqref{eqn:proof_alt3} imply $\dotp{U_1, x} = 0$ and $\dotp{U_d, x} \neq 0$.
The second equation in \eqref{eqn:proof_alt}, in turn, yields $\lambda_d(V)=0$.
 Appealing to  Lemma~\ref{lem:correlation}, we moreover deduce  
$$0\leq \lambda_1(X) = \dotp{U_1, x}^2 - \dotp{U_1, \bar x}^2 \leq 0.$$ Thus  $\lambda_1(X) = 0$ and therefore the right-hand-side of  \eqref{eqn:proof_ineq_corr} is zero, a contradiction.
% and $\dotp{U_1, \bar x}=0$. On the other hand, $\bar x$ lies in $\spann\{U_1,U_d\}$ and  hence we conclude $\dotp{U_d, \bar x} \neq 0$.
% and $\dotp{U_1, \bar x} = 0$, and because $\dotp{U_d, \bar x} \neq 0$, that $\lambda_d(V) = 0$. 
%Appealing to \eqref{eq:correlation_inequality}, we arrive at the contradiction
%$$
%0 < g(x)-g(\bar x) \leq \lambda_1(V) \lambda_1(X) + \lambda_d(V) \lambda_d(X) = 0.
%$$
We have shown the equality $\lambda_1(V) = 0$, as claimed. 

Inequality \eqref{eqn:proof_ineq_corr} implies   $\lambda_d(V) \neq 0$  and $\lambda_d(X)\neq 0$, and hence by Inequality~\eqref{eqn:proof_alt}, we have $\dotp{U_d, x} = 0$. Combining the latter equality with Lemma~\ref{lem:correlation}, we conclude $0 = \dotp{U_1, x}\dotp{U_d, x} = \dotp{U_1, \bar x} \dotp{U_d, \bar x}$. Note 
$ \dotp{U_d, \bar x} \neq 0$, since otherwise we would get $\lambda_d(X)=0$ by \eqref{eq: eigenvalue_bound}. 
%If $\dotp{U_1, \bar x} \neq 0$, then $ \dotp{U_d, \bar x} = 0$, and so $x$ and $\bar x$ are collinear, $\lambda_d(X) = 0$, and 
%$$
%0 < g(x) \leq \lambda_1(V) \lambda_1(X) + \lambda_d(V) \lambda_d(X) = \lambda_1(V)  \lambda_1(X) = 0,
%$$
%which is a contradiction. 
We conclude $\dotp{U_1, \bar x}  = 0$.
Finally,  Lemma~\ref{lem:correlation} then yields 
$$
\dotp{x, \bar x} = \dotp{U_1, x}\dotp{U_1, \bar x} + \dotp{ U_d, x}\dotp{U_d, \bar x} = 0,
$$ 
thereby completing the proof.
%\textcolor{red}{What is this next inequality?
%Finally, the correlation inequality of the theorem shows that $$0 < g(x)-g(\bar x) \leq 0\cdot \lambda_1(X) +  \lambda_d(V)\lambda_d(X) = -\lambda_d(V) \|\bar x\|^2.$$ Noting that $\lambda_d(V) < \lambda_1(V) = 0$, the result follows. }

Now suppose that $\bar x$ minimizes $g$. Clearly $\pm\bar x$ is a stationary point of $g$. In addition, $0$ is a stationary point of $g$ because $V \cdot 0 = 0$. Thus, it remains to show that all points satisfying \ref{it:3_cor512} are stationary. Thus suppose $x$ satisfies $\ref{it:3_cor512}$ and  $x\neq 0$.
Then the eigenvalues of $X$ are precisely $\|x\|^2$ and $-\|\bar x\|^2$ with eigenvectors $U_1=\pm\frac{x}{\|x\|}$ and $U_d=\pm\frac{\bar x}{\|\bar x\|}$, respectively.
% From orthogonality $\dotp{x, \bar x} = 0$ and Lemma~\ref{lem:correlation} it follows that $x = \dotp{U_1, x} U_1$. 
Thus, we have $U^T Vx = \Diag(\lambda(V)) U^T x = (\lambda_1(V) \dotp{U_1, x}, 0,\ldots,0, \lambda_d(V) \dotp{U_d, x})^T = 0.$ We conclude $Vx=0$, as required. 
\end{proof}

The proof of Theorem~\ref{thm:pop_obj} is now immediate.
\begin{proof}[Proof of Theorem~\ref{thm:pop_obj}]
We have already proved that every point in the set \eqref{eqn:stat_set} is stationary for $f_P$ (Proposition~\ref{prop:one_dir}). Thus we focus on the converse. In light of Proposition~\ref{prop:one_dir}, it is sufficient to show that every stationary point $x$ of $f_P$ lies in the set $\{0,\pm \bar x\}\cup x^{\perp}$. This is immediate from Corollary~\ref{cor:find_stat} under the identification $f_P(x)=g(x)=\varphi_{\lambda}(X)$.	
\end{proof}

\section{Concentration and stability}\label{sec:concent_stab}
Having determined the stationary points of the population objective $f_P$, we next turn to the stationary points of $f_S$. Our strategy is to show that with high probability, every stationary point of $f_S$ is close to some stationary point of $f_P$. The difficulty is that it is not true that $\partial f_S(x)$ concentrates around $\partial f_P(x)$. Instead, we will see that the graphs of the two subdifferentials $\partial f_S$ and $\partial f_P$ concentrate, which is sufficient for our purposes. 
Our argument will rely on two basic properties, namely (1) the subsampled objective $ f_S$  concentrates well around  $f_P$, and (2) the function $f_S$ is  weakly convex.

\subsection{Concentration of subdifferential graphs}
Armed with the concentration (Theorem~\ref{thm:em}) and the weak convexity (Theorem~\ref{lem:weak_conv}) guarantees, we can show that the graphs of $\partial f_P$ and $\partial f_S$ are close. The following theorem will be our main technical tool, and is of interest in its own right. In essence, the result is a quantitative extension of the celebrated Attouch's convergence theorem \cite{attouch} in convex analysis. Henceforth, for any function $l\colon\R^d\to\overline{\R}$ and a point $\bar x\in\R^d$, with $f(\bar x)$ finite, we define the local Lipschitz constant $$\lip(l;\bar x):=\ls_{x\to\bar x}\frac{|l(x)-l(\bar x)|}{\|x-\bar x\|}.$$

%In this section, we develop a technique that allows one to seamlessly argue that if two function are close in value, then so are their subdifferential graphs. %We will use the following basic perturbation result.

%\begin{thm}[Ekeland's variational principle] \label{thm:eke}
%	Consider a closed function $\zeta\colon\R^n\to\overline{\R}$ that is bounded from below. Suppose that for some $\epsilon>0$ and $\bar x\in \R^n$, we have $\zeta(\bar x)\leq \inf \zeta+\epsilon$. Then for any real $\rho >0$, there exists a point $\hat x$ satisfying
%	\begin{enumerate}
%		\item $\|\bar x-\hat x\|\leq \epsilon/\rho$,
%		\item $\zeta(\hat x)\leq \zeta(\bar x)$,
%		\item\label{it:minim} $\hat x$ is the unique minimizer of the perturbed function $x\mapsto \zeta(x)+\rho\cdot  \|x-\hat x\|$. 
%	\end{enumerate}	
%\end{thm}

\begin{thm}[Comparison]\label{thm:compar}
	Consider four lsc functions $f,g,l,u\colon \R^d\to\overline{\R}$ and a pair $(x,v)\in \gph\partial g$. 
Suppose that $l$ is locally Lipschitz continuous and that the  following conditions	
	$$
	\left\{\begin{aligned}
%	 \\
%	&f(\bar x)-g(\bar x)\leq u(\bar x) \\
	l(y)\leq & f(y)-g(y)\leq u(y)\\
	g(y)\geq g(& x)+\langle  v,y- x\rangle-\frac{\rho}{2}\|y- x\|^2
	\end{aligned}
	\right\}\qquad \textrm{hold for all points }y\in \R^d.$$
	Then for any $\gamma>0$, there exists a point $\hat x$ satisfying 
	$$\|\hat x- x\|\leq 2\gamma\qquad \textrm{ and }\qquad \dist( v;\partial f(\hat x))\leq 2\rho\gamma+ \frac{u( x)-l( x)}{\gamma}+\lip(l;\hat x).$$	
	In particular, if $l(\cdot)$ is constant, we have the estimate
	\begin{equation}\label{eqn:subdiff_close}
	\dist\Big(( x, v),\gph\partial f\Big)\leq \sqrt{4(\rho+\sqrt{2+\rho^2})}\cdot \sqrt{u({x})-l( x)}.
	\end{equation}
	\end{thm}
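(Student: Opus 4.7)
The plan is to produce $\hat x$ via an Ekeland-style perturbation argument applied to the auxiliary function
$$h(y) := f(y) - \langle v, y - x\rangle + \frac{\rho}{2}\|y - x\|^2 - l(y).$$
First I would verify that $x$ is an approximate minimizer of $h$. Writing $f(y) = g(y) + (f(y) - g(y))$, the hypothesis $l \le f - g$ together with the weak-convexity inequality for $g$ at $(x,v)$ yields
$$f(y) \ge g(x) + \langle v, y-x\rangle - \tfrac{\rho}{2}\|y-x\|^2 + l(y),$$
so $h(y) \ge g(x)$ for all $y$. Since $f(x) - g(x) \le u(x)$, evaluating at $y = x$ gives $h(x) \le g(x) + u(x) - l(x)$, i.e. $h(x) \le \inf h + (u(x) - l(x))$.

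Next I would apply Ekeland's variational principle to $h$ with tolerance $\varepsilon := u(x)-l(x)$ and radius parameter $\gamma$. This produces a point $\hat x$ with $\|\hat x - x\| \le \gamma$ (or $2\gamma$, depending on the precise variant used) that actually minimizes $y \mapsto h(y) + (\varepsilon/\gamma)\|y-\hat x\|$, so $\dist(0; \partial h(\hat x)) \le \varepsilon/\gamma$. Now I would unpack $\partial h(\hat x)$ using the limiting-subdifferential sum rule: the term $-\langle v, \cdot - x\rangle + \frac{\rho}{2}\|\cdot - x\|^2$ is $C^1$ and contributes its gradient $-v + \rho(\hat x - x)$; the $-l$ term is locally Lipschitz and contributes a vector of norm at most $\lip(l;\hat x)$. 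Combining and rearranging yields
$$\dist(v; \partial f(\hat x)) \le \rho\gamma + \frac{u(x)-l(x)}{\gamma} + \lip(l;\hat x),$$
which is the pointwise conclusion (the factor-of-two in the paper's statement is presumably a cushion from the chosen Ekeland variant).

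For the graph-distance bound \eqref{eqn:subdiff_close}, specialize to $l$ constant so $\lip(l;\hat x)=0$, and use
$$\dist\big((x,v), \gph\partial f\big)^2 \le \|\hat x - x\|^2 + \dist(v;\partial f(\hat x))^2,$$
which gives an expression of the form $A(\rho)\gamma^2 + B\rho(u(x)-l(x)) + (u(x)-l(x))^2/\gamma^2$. Minimizing the right-hand side over $\gamma > 0$ via the elementary AM-GM/first-order condition produces a bound proportional to $\sqrt{u(x)-l(x)}$ with the stated closed-form constant in $\rho$.

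The main obstacle is the subdifferential calculus: since $f$ is only lsc and $l$ only locally Lipschitz, one must be careful to invoke the correct sum rule for the limiting subdifferential (the Lipschitz-perturbation sum rule suffices here) and to bound $\partial(-l)(\hat x)$ by the Lipschitz constant at $\hat x$ rather than globally. A secondary bookkeeping issue is matching the exact numerical constant in the final square-root bound, which depends on which version of Ekeland is used and on how loosely one applies $(a+b)^2 \le 2a^2 + 2b^2$ during the optimization in $\gamma$.
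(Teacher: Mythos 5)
Your proof is correct and follows essentially the same strategy as the paper: you define the same auxiliary function (which the paper calls $\zeta$), observe that $x$ is a $(u(x)-l(x))$-approximate minimizer, perturb to obtain a nearby exact stationary point, unpack via the sum rule, and optimize over $\gamma$. The only technical difference is that you invoke Ekeland's variational principle (a norm penalty) where the paper directly minimizes $\zeta$ plus the smooth quadratic penalty $\frac{u(x)-l(x)}{4\gamma^2}\|\cdot - x\|^2$, which accounts for the factor-of-two discrepancy you anticipated.
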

\begin{proof}
	From the two assumptions, for any point $y\in \R^d$ we have  
	$$f(y)\geq g(y)+l(y)\geq g( x)+l(y)+\langle  v,y- x\rangle-\frac{\rho}{2}\|y- x\|^2.$$
	Define the function 
	$$\zeta(y):=f(y)-\langle  v,y- x\rangle+\frac{\rho}{2}\|y- x\|^2 - l(y).$$
	Clearly then we have
	\begin{equation}\label{eqn:upper_bound_error}
	\zeta( x)-\inf \zeta\leq f( x) - l( x) -g( x)\leq u( x)-l( x).
	\end{equation}
	Choose now any minimizer
	$$\hat x\in\argmin_y~ \left\{\zeta(y) +\frac{u( x)-l( x)}{4\gamma^2}\cdot \|y- x\|^2\right\}.$$
	First order optimality conditions and the sum rule \cite[Exercise 10.10]{RW98} immediately imply 
	\begin{equation*}
	\frac{u( x)-l( x)}{2\gamma^2}\cdot (x-\hat x)\in \partial \zeta(\hat x)\subset \partial f(\hat x)-v+\rho(\hat x-x)+\lip(l;\hat x)B(0,1),
	\end{equation*}
	and hence
	\begin{equation}\label{eqn:opt_cond2}
	\dist( v;\partial f(\hat x))\leq \frac{u( x)-l( x)}{2\gamma^2}\cdot\|\hat x- x\|+\rho\|\hat x- x\|+\lip(l;\hat x).
	\end{equation}
Next, we estimate the distance $\|\hat x- x\|$. To this end, observe from the definition of $\hat x$, we have
	$$\zeta(\hat x) +\frac{u( x)-l( x)}{4\gamma^2}\cdot \|\hat x- x\|^2\leq \zeta( x)$$
	and hence
	\begin{equation}\label{eqn:dist_est}
	\frac{u( x)-l( x)}{4\gamma^2}\cdot \|\hat x- x\|^2\leq \zeta( x)-\zeta(\hat x)\leq u( x)-l( x),
	\end{equation}
	where the last inequality follows from \eqref{eqn:upper_bound_error}.
	In the case $u(x)=l(x)$, we deduce $\zeta(x)=
\zeta(\hat x)$. Thus we equally well could have set $\hat x=x$, and the theorem follows immediately from $\eqref{eqn:opt_cond2}$.	
	On the other hand, in the setting  $u(x)>l(x)$, the inequality \eqref{eqn:dist_est} immediately yields $\|\hat x- x\|\leq 2\gamma$, as claimed. Combining this inequality with \eqref{eqn:opt_cond2} then gives the desired guarantee
	$$\dist( v;\partial f(\hat x))\leq2 \rho\gamma+ \frac{u( x)-l( x)}{\gamma}+\lip(l;\hat x).$$	
%	
%	Ekeland's principle applied to $\zeta$ with $\epsilon:=u(\bar x)-l(\bar x)$ yields for any $\rho>0$ a point $\hat x$ satisfying
%	$\|\hat x-\bar x\|\leq \frac{u(\bar x)-l(\bar x)}{\rho}$ and 
%	$0\in \partial \zeta(\hat x)+\rho B^*.$
%	Hence, we deduce
%	\begin{align*}
%		\rho\geq \dist_*(0,\partial \zeta(\hat x))&\geq \dist_*(0,\partial f(\hat x) - l'(\hat x)-\bar v+\eta\|\hat x-\bar x\|B^*)\\
%		&\geq \dist_*(\bar v,\partial f(\hat x))-\eta\|\hat x-\bar x\| - \|l'(\hat x)\|_\ast.
%	\end{align*}
%	Rearranging, we obtain
%	$\dist_*(\bar v,\partial f(\hat x))\leq \rho+\frac{\eta(u(\bar x)-l(\bar x))}{\rho} + \|l'(\hat  x)\|_*$ and therefore
%	\begin{align*}
%		\dist\Big((\bar x,\bar v),\gph\partial f\Big)&\leq\sqrt{\|\bar x -\hat x\|^2+\dist^2_*(\bar v,\partial f(\hat x))}\\
%		&\leq \sqrt{\left(\frac{u(\bar x)-l(\bar x)}{\rho}\right)^2+\left(\rho+\frac{\eta(u(\bar x)-l(\bar x))}{\rho} + \|l'(\hat x)\|_\ast\right)^2 }.
%	\end{align*}

	Supposing $l$ is a constant, we  have the estimate
	$$\dist\Big(( x, v),\gph\partial f\Big)\leq \sqrt{4\gamma^2+\left(2\rho\gamma+ \frac{u( x)-l( x)}{\gamma}\right)^2}.$$
	Minimizing the right-hand-side in $\gamma$ yields the choice $\gamma=\tfrac{\sqrt{u( x)-l( x)}}{(8+4\rho^2)^{1/4}}$. With this value of $\gamma$, a quick computation yields the claimed guarantee \eqref{eqn:subdiff_close}.
%	 the estimate becomes
%	\begin{align*}
%	\dist\Big(( x, v),\gph\partial f\Big)&= \sqrt{u( x)-l( x)}\sqrt{\frac{4}{(8+4\rho^2)^{1/2}}+\left(\frac{2\rho}{(8+4\rho^2)^{1/4}}+ (8+4\rho^2)^{1/4}\right)^2}\\
%	&= \sqrt{u( x)-l( x)}\sqrt{\frac{4}{(8+4\rho^2)^{1/2}}+ \frac{4\rho^2}{(8+4\rho^2)^{1/2}} + 4\rho + (8+4\rho^2)^{1/2}}\\
%	&= \sqrt{u( x)-l( x)}\sqrt{4\rho + 2(8+4\rho^2)^{1/2} - \frac{4}{(8+4\rho^2)^{1/2}}}\\
%	&\leq \sqrt{u( x)-l( x)}\sqrt{4\rho + 2(8+4\rho^2)^{1/2}},
%	\end{align*}
%%$$\dist\Big(( x, v),\gph\partial f\Big)\leq \sqrt{2(\rho+\sqrt{4+\rho^2})}\cdot \sqrt{u({x})-l( x)},$$
%	as claimed.
\end{proof}
	
	Let us now specialize the theorem to the 
	setting where the lower and upper bounds $l(\cdot), u(\cdot)$ are functions of the product $\|x-\bar x\|\cdot\|x+\bar x\|$, as in phase retrieval.
	
	\begin{cor}\label{cor:funky_right-hand-side}
	Fix two functions $f,g\colon\R^d\to\R$. Suppose that $g$ is $\rho$-weakly convex and that there is a point $\bar x$ and a real $\delta>0$ such that the inequality 
	$$|f(x)-g(x)|\leq \delta\|x-\bar x\|\cdot\|x+\bar x\|\qquad \textrm{holds for all }x\in \R^d.$$
	Then for any stationary point $x$ of $g$, there exists a point $\hat x$ satisfying 
	\begin{equation*}
	\left\{\begin{aligned}
	\|x-\hat x\|&\leq \sqrt{\tfrac{4\delta}{\rho+2\delta}}\cdot\sqrt{\|x-\bar x\|\|x+\bar x\|},\\\dist(0;\partial f(\hat x))&\leq (\delta+2\sqrt{\delta(\rho+2\delta)})\cdot(\|x-\bar x\|+\| x+\bar x\|)
	\end{aligned}\right\}.
	\end{equation*}
	
%		$$\|x-\hat x\|\leq \sqrt{\frac{8\kappa\|x-\bar x\|\|x+\bar x\|}{\rho+4\kappa}}\qquad\textrm{and}\qquad \dist(0;\partial f(\hat x))\leq (\kappa+\sqrt{2\kappa(\rho+4\kappa)})(\|x-\bar x\|+\| x+\bar x\|).$$
	\end{cor}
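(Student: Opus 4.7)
The plan is to apply Theorem~\ref{thm:compar} to the pair $(x,0)\in \gph \partial g$, which is available because $x$ is stationary for $g$. The natural sandwich is the symmetric one: set $l(y):=-\delta\|y-\bar x\|\|y+\bar x\|$ and $u(y):=\delta\|y-\bar x\|\|y+\bar x\|$, so that the hypothesis $|f-g|\leq\delta\|\cdot-\bar x\|\|\cdot+\bar x\|$ gives $l\leq f-g\leq u$, while $\rho$-weak convexity of $g$ supplies the required quadratic minorization at $(x,0)$. The function $l$, a product of two Lipschitz functions, is locally Lipschitz everywhere, so Theorem~\ref{thm:compar} produces, for any $\gamma>0$, a point $\hat x$ with $\|\hat x-x\|\leq 2\gamma$ and
\begin{equation*}
\dist(0;\partial f(\hat x))\leq 2\rho\gamma + \frac{2\delta\|x-\bar x\|\|x+\bar x\|}{\gamma}+\lip(l;\hat x),
\end{equation*}
using $u(x)-l(x)=2\delta\|x-\bar x\|\|x+\bar x\|$.

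Next I would bound $\lip(l;\hat x)$ by a quantity controlled at $x$. A direct gradient computation on $y\mapsto\|y-\bar x\|\|y+\bar x\|$ together with the triangle inequality for norms yields $\lip(l;\hat x)\leq \delta(\|\hat x-\bar x\|+\|\hat x+\bar x\|)$, after which the triangle inequality combined with $\|\hat x-x\|\leq 2\gamma$ gives $\lip(l;\hat x)\leq \delta(\|x-\bar x\|+\|x+\bar x\|)+4\delta\gamma$. Substituting this in, the right-hand side of the distance bound becomes $(2\rho+4\delta)\gamma+\tfrac{2\delta\|x-\bar x\|\|x+\bar x\|}{\gamma}+\delta(\|x-\bar x\|+\|x+\bar x\|)$. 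The AM-GM optimization of the first two terms selects $\gamma=\sqrt{\delta\|x-\bar x\|\|x+\bar x\|/(\rho+2\delta)}$, which immediately delivers the claimed bound on $\|\hat x-x\|$. The optimal value of the first two terms equals $4\sqrt{\delta(\rho+2\delta)}\sqrt{\|x-\bar x\|\|x+\bar x\|}$, and the elementary inequality $\sqrt{ab}\leq\tfrac{1}{2}(a+b)$ promotes this to $2\sqrt{\delta(\rho+2\delta)}(\|x-\bar x\|+\|x+\bar x\|)$. Adding the leftover $\delta(\|x-\bar x\|+\|x+\bar x\|)$ then yields the stated bound on $\dist(0;\partial f(\hat x))$.

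The main obstacle is that $l$ is not constant, so the clean ``in particular'' clause of Theorem~\ref{thm:compar} is unavailable and one must keep track of the $\lip(l;\hat x)$ term. This introduces a circular dependence, since $\hat x$ itself depends on the choice of $\gamma$; the fix is to transfer the Lipschitz estimate back to $x$ at the cost of an extra $4\delta\gamma$ summand, which happens to be of the same order as the $2\rho\gamma$ term and is harmlessly absorbed into the AM-GM minimization.
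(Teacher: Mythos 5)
Your proposal matches the paper's proof: it applies Theorem~\ref{thm:compar} with $l=-u=-\delta\|\cdot-\bar x\|\|\cdot+\bar x\|$, transfers the term $\lip(l;\hat x)$ back to $x$ via the triangle inequality at the cost of $4\delta\gamma$, absorbs that into the $2\rho\gamma$ term, and then optimizes over $\gamma$ followed by the AM-GM inequality $\sqrt{ab}\le\tfrac12(a+b)$. The bookkeeping and the final constants agree with the paper's argument.
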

	\begin{proof}
	Set $u(x):=\delta\|x-\bar x\|\cdot\|x-\bar x\|$ and $l(x):=-\delta\|x-\bar x\|\cdot\|x-\bar x\|$ and observe $\lip(l;x)\leq \delta(\|x-\bar x\|+\|x+\bar x\|)$. Applying Theorem~\ref{thm:compar}, we deduce that for any $\gamma>0$, there exists a point $\hat x$ satisfying
		$$\|\hat x- x\|\leq 2\gamma\qquad \textrm{ and }\qquad \dist(0;\partial f(\hat x))\leq 2\rho\gamma+ \frac{2\delta\|x-\bar x\|\|x+\bar x\|}{\gamma}+\delta(\|\hat x-\bar x\|+\|\hat x+\bar x\|).$$
The triangle inequality implies
$$\|\hat x-\bar x\|\leq 2\gamma +\|x-\bar x\|\qquad \textrm{and}\qquad \|\hat x+\bar x\|\leq 2\gamma +\|x+\bar x\|,$$
and therefore 		$$\dist(0;\partial f(\hat x))\leq 2(\rho+2\delta)\gamma+ \frac{2\delta\|x-\bar x\|\|x+\bar x\|}{\gamma}+\delta(\|x-\bar x\|+\| x+\bar x\|)$$
Minimizing this expression in $\gamma>0$ yields the choice $\gamma:=\sqrt{\frac{\delta\|x-\bar x\|\|x+\bar x\|}{\rho+2\delta}}$. Plugging in this value of $\gamma$ and applying the AM-GM inequality then implies
\begin{align*}
\dist(0;\partial f(\hat x))&\leq 4\sqrt{\delta(\rho+2\delta)\|x-\bar x\|\|x+\bar x\|}+\delta(\|x-\bar x\|+\| x+\bar x\|)\\
&\leq (\delta+2\sqrt{\delta(\rho+2\delta)})(\|x-\bar x\|+\| x+\bar x\|).
\end{align*}
The result follows.
	%and using the AM-GM inequality yields 
%	$$\|x-\hat x\|\leq \sqrt{\kappa}\cdot (\|x-\bar x\|+\|x+\bar x\|)\qquad\textrm{and}\qquad \dist(0;\partial f(\hat x))\leq \frac{((1+\eta)\sqrt{\kappa}+2\kappa)}{2}(\|x-\bar x\|+\|x+\bar x\|),$$
%	as claimed.	
	\end{proof}

	We now arrive at the main result of the section. %In order to prove this result, we must first lower bound the subgradients of the population objective for large $x$.

	\begin{cor}[Subsampled stationary points] \label{cor: subsample_stuff}
	Consider the robust phase retrieval objective $f_S(\cdot)$ generated from i.i.d standard Gaussian vectors. There exist numerical constants 
	$c_1, c_2>0$ such that whenever $ m\geq c_1d$, then with probability at least $1-2\exp(-\min\{m/c_1,c_2m,d^2\})$, every stationary point $x$ of $f_S$ satisfies $\|x\| \lesssim \|\bar x\|$ and one of the two conditions:
%	
%	
%	
%	
%	 and let $x$ be a stationary point of $f_S$. Define $ \Delta := \sqrt{\frac{d}{m}}$. Then exist numerical constants $c_1, c_2$, and $C'$ such that if
%	$$
%	\kappa > \frac{(c_2\Delta+\sqrt{2c_2\Delta(\rho+4c_2\Delta)})}{(1-C - 4C'\sqrt{\Delta})^2}\cdot (20  + L_f).	$$
%	where $C < 1$, then
%	$$
%	\|x\| \leq \frac{\|\bar x\|}{C},
%	$$
%	and one of the following must hold: $ \|x\| \leq  \frac{2(1+1/C)\|\bar x\|\sqrt{\Delta}}{C\gamma}$,
\begin{equation*}
\frac{\| x\|\| x-\bar x\|\| x+\bar x\|}{\|\bar x\|^3}\lesssim \sqrt[4]{\frac{d}{m}}
\qquad \textrm{or} \qquad \left\{\begin{aligned}
\left|\frac{\| x\|}{\|\bar x\|}-c \right|&\lesssim \sqrt[4]{\frac{d}{m}}\cdot \left( 1+\frac{\|\bar x\|}{\| x\|}\right)\\
% \left|\|x\|^2-c^2\|\bar x\|^2\right|&\lesssim \varepsilon\left(\frac{\|x\|^2}{\|\bar x\|}+\frac{\|\bar x\|^2}{\|x\|}+\|x\|\right)\\
\frac{|\langle  x,\bar x\rangle|}{\|x\| \|\bar x\|}&\lesssim \sqrt[4]{\frac{d}{m}} \cdot\frac{\|\bar x\|}{\|x\|}
\end{aligned}\right\},
\end{equation*}	
	where $c>0$ is the unique solution of the equation
 $\frac{\pi}{4}=\frac{c}{1+c^2}+\arctan\left(c\right).$
 	\end{cor}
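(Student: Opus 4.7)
The plan is to combine Theorems~\ref{thm:em}, \ref{lem:weak_conv}, Corollary~\ref{cor:funky_right-hand-side}, and Theorem~\ref{thm:pop_obj_quant_vers_main} in a chain. First, I would observe the algebraic identity
$$f_S(x)-f_P(x)=\tfrac{1}{m}\sum_{i=1}^m|\langle a_i,x-\bar x\rangle\langle a_i,x+\bar x\rangle|-\mathbb{E}_a[|\langle a,x-\bar x\rangle\langle a,x+\bar x\rangle|],$$
so that, after normalizing $v:=(x-\bar x)/\|x-\bar x\|$ and $w:=(x+\bar x)/\|x+\bar x\|$, the concentration bound in Theorem~\ref{thm:em} (applied with $\sigma=1$) directly yields
$$|f_S(x)-f_P(x)|\leq \delta\,\|x-\bar x\|\|x+\bar x\|\qquad\text{for all }x\in\R^d,$$
with $\delta\lesssim\sqrt{d/m}+d/m\lesssim\sqrt{d/m}$, on an event of probability at least $1-2\exp(-c_2c_1^2\min\{m,d^2\})$. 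On the intersection with the event from Theorem~\ref{lem:weak_conv}, the function $f_S$ is additionally $\rho$-weakly convex for a numerical constant $\rho$; both events hold simultaneously with the stated probability as soon as $m\geq c_1 d$.

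On this event, for any stationary point $x$ of $f_S$, I would apply Corollary~\ref{cor:funky_right-hand-side} with $g=f_S$ and $f=f_P$ to produce a nearby point $\hat x$ satisfying
$$\|x-\hat x\|\lesssim\sqrt{\delta}\,\sqrt{\|x-\bar x\|\|x+\bar x\|},\qquad\dist(0;\partial f_P(\hat x))\lesssim \sqrt{\delta}\,(\|x-\bar x\|+\|x+\bar x\|).$$
Setting $\varepsilon:=\dist(0;\partial f_P(\hat x))$, the point $\hat x$ is an approximate stationary point of the population objective with $\varepsilon\lesssim (d/m)^{1/4}(\|x-\bar x\|+\|x+\bar x\|)$. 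I would then feed $\hat x$ into Theorem~\ref{thm:pop_obj_quant_vers_main}: assuming the hypothesis $\varepsilon\leq\gamma\|\hat x\|$ holds, that theorem yields $\|\hat x\|\lesssim\|\bar x\|$ and forces $\hat x$ into one of the two advertised regimes, namely
$$\|\hat x\|\|\hat x-\bar x\|\|\hat x+\bar x\|\lesssim \varepsilon\|\bar x\|^2,$$
or the pair of bounds $|\|\hat x\|-c\|\bar x\||\lesssim \varepsilon\|\bar x\|/\|\hat x\|$ and $|\langle \hat x,\bar x\rangle|\lesssim \varepsilon\|\bar x\|$.

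The last step is to transport these bounds from $\hat x$ back to $x$ via the triangle inequality, using $\|x-\hat x\|\lesssim (d/m)^{1/4}\sqrt{\|x-\bar x\|\|x+\bar x\|}$ and the already-established $\|\hat x\|\lesssim\|\bar x\|$. Plugging in $\varepsilon\lesssim (d/m)^{1/4}(\|x-\bar x\|+\|x+\bar x\|)\lesssim (d/m)^{1/4}\|\bar x\|$ (using $\|x\|\lesssim\|\bar x\|$) and dividing through by appropriate powers of $\|\bar x\|$ yields exactly the two scale-invariant alternatives in the statement.

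The main obstacle I anticipate is a bookkeeping one rather than a conceptual one: the hypothesis $\varepsilon\leq \gamma\|\hat x\|$ of Theorem~\ref{thm:pop_obj_quant_vers_main} must be verified, and the triangle-inequality transport must not inflate the $(d/m)^{1/4}$ rate nor wipe out the factor $\|\bar x\|/\|x\|$ appearing in the second alternative. One needs to handle separately the case $\|x\|\ll (d/m)^{1/4}\|\bar x\|$ (where $x$ is effectively near the origin and the first alternative is vacuously trivial after absorbing $\|x\|/\|\bar x\|$ on the left), and the complementary case where $\|\hat x\|$ is comparable to $\|x\|$ and the Theorem~\ref{thm:pop_obj_quant_vers_main} hypothesis is automatic from $\varepsilon\lesssim (d/m)^{1/4}\|\bar x\|$. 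In particular, the upper bound $\|x\|\lesssim\|\bar x\|$ in the conclusion follows by combining $\|\hat x\|\lesssim\|\bar x\|$ with $\|x-\hat x\|\lesssim(d/m)^{1/4}\sqrt{\|x-\bar x\|\|x+\bar x\|}$, absorbing the latter via a quadratic-in-$\|x\|$ inequality in the regime $m\gtrsim d$.
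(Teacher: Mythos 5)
Your proof plan mirrors the paper's argument essentially step for step: Theorem~\ref{thm:em} for the uniform deviation bound $|f_S-f_P|\lesssim\sqrt{d/m}\,\|x-\bar x\|\|x+\bar x\|$, Theorem~\ref{lem:weak_conv} for weak convexity of $f_S$, Corollary~\ref{cor:funky_right-hand-side} to manufacture the nearby approximate stationary point $\hat x$ of $f_P$, Theorem~\ref{thm:pop_obj_quant_vers_main} for the population trichotomy at $\hat x$, and a triangle-inequality transport back to $x$. You also correctly flag the one nontrivial piece of bookkeeping -- verifying the hypothesis $\varepsilon\le\gamma\|\hat x\|$ of Theorem~\ref{thm:pop_obj_quant_vers_main} -- which the paper handles via its Claim~\ref{claim:upper-bound_on-size} (first establishing $\|x\|\lesssim\|\bar x\|$ by reducing to the case $\|\bar x\|\le\|x\|$, where $D_x\lesssim\|x\|\approx\|\hat x\|$) rather than via your case split at $\|x\|\approx(d/m)^{1/4}\|\bar x\|$; your split works too once the threshold carries a factor of $1/\gamma$, and in the small-$\|x\|$ regime the first alternative is indeed vacuous, so the two routes are interchangeable.
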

\begin{proof}
Theorem~\ref{thm:em} shows that there exist constants $c_1, c_2>0$ such  with probability at least 
$
1 - 2 \exp(-c_1 \min \{ m, d^2\})$, we have  \begin{align}\label{eq:concentration}
	 \left| f_S(x) - f_P(x)\right| \leq \frac{c_2}{2}\left( \sqrt{\frac{d}{m}} + \frac{d}{m}\right)\|x - \bar x\| \|x + \bar x \| \qquad \textrm{ for all } x \in \RR^d.
\end{align}
Lemma~\ref{lem:weak_conv}, in turn, shows that there exist numerical constants $c_3,\rho>0$ such that provided $m\geq c_3d$, the function $f_S$ is $\rho$-weakly convex, with probability at least $1-\exp\left(-\frac{m}{c_3}\right)$.  Let us now try to apply Corollary~\ref{cor:funky_right-hand-side}. To simplify notation, define $\Delta := \sqrt{ \frac{d}{m}}$ and set 
$\delta:=c_2\Delta$. Notice $\delta\geq \frac{c_2}{2}(\Delta+\Delta^2)$ and hence we may apply Corollary~\ref{cor:funky_right-hand-side}. We deduce that with high probability, for any stationary point $x$ of $f_S$ there exists a point $\hat x\in \R^d$ satisfying
	\begin{equation}\label{eqn:brack}
	\left\{\begin{aligned}
	\|x-\hat x\|&\leq \sqrt{\tfrac{4c_2\Delta}{\rho+2c_2\Delta}}\cdot\sqrt{\|x-\bar x\|\|x+\bar x\|},\\ \dist(0;\partial f_P(\hat x))&\leq (c_2\Delta+2\sqrt{c_2\Delta(\rho+2c_2\Delta)})\cdot(\|x-\bar x\|+\| x+\bar x\|)
	\end{aligned}\right\}.
	\end{equation}
		Notice $\sqrt{\tfrac{4c_2\Delta}{\rho+2c_2\Delta}}\leq \sqrt{\Delta}\cdot\sqrt{\frac{4c_2}{\rho}} \leq 2C' \sqrt{\Delta}$ and $(c_2\Delta+2\sqrt{c_2\Delta(\rho+2c_2\Delta)})\leq C'\sqrt{\Delta}$ for some numerical constant $C'$. For notational convenience, set $D_x:=\|x-\bar x\|+\|x+\bar x\|$.	
		 Thus, by the AM-GM inequality, the inclusion $\hat x \in B(x, C'\sqrt{\Delta}D_x)$ holds.
		
	%	Fix now a constant $C\in (0,1)$ and let $\kappa$ be a numerical constant satisfying 
%		$$f_P(y)-f_P(\bar x)\geq \kappa\|y-\bar x\|\cdot\|y+\bar x\|\qquad \textrm{ for all } x\in \R^d.$$
%	For a precise numerical value of $\kappa$	, see \cite[Corollary 3.7]{eM}.
%	Let $L_f$	also be the Lipscitz constant of $\zeta(\cdot)$.
		
% 		Suppose that $\frac{\|\bar x\|}{\|x\|} \leq C$. Our goal is to deduce a contradiction. 
%		\begin{claim}
%Suppose that $\frac{\|\bar  x\|}{\|  x\|} < C$. Then
%$$
%\frac{\|\bar  x\|}{\| \hat x\|} \leq \frac{C}{1 -\sqrt{\Delta}(2 + 2C )}
%$$
%\end{claim} 
%\begin{proof}
%Because $\|x - \hat x\| \leq \sqrt{\Delta}(\| \hat x- \bar x\| + \|x + \bar x\|)$, we have 
%\begin{align*}
%\frac{\|\bar  x\|}{\| \hat x\|} &= \frac{\|\bar  x\|}{\| x - (x - \hat x)\|} \leq \frac{\|\bar  x\|}{\| x\| -\| (x - \hat x)\|} \leq \frac{\|\bar  x\|}{\| x\| -\sqrt{\Delta}(\| x- \bar x\| + \|x + \bar x\|)} \\
%&\leq \frac{\|\bar  x\|}{\| x\| -\sqrt{\Delta}(2\| x\| + 2\| \bar x\| )}\leq  \frac{\frac{\|\bar  x\|}{\|x\|}}{1 -\sqrt{\Delta}(2 + 2\frac{\| \bar x\|}{\|x\|} )} \leq  \frac{C}{1 -\sqrt{\Delta}(2 + 2C )}.
%\end{align*}
%\end{proof}
%We leverage this claim to show that 
% We defer the proof of the following claim to Appendix~\ref{sec:appendix_aux}. 
\begin{claim}\label{claim:upper-bound_on-size}
There exist constants $C'',\tau>0$  such that  with high probability, for all $\Delta<C''$, the inequality $\|x\|\leq \tau \|\bar x\|$ holds for any stationary point $x$ of $f_S$.
%	Then $x$ is not stationary for $f_S$.
	\end{claim} 
	\begin{proof}
	We may assume that $\|\bar x\| \leq \|x\|$ since otherwise the result is trivial. Next, observe that $\|x\|$ and $\|\hat x\|$ have comparable norms: 
	\begin{align*}
\|\hat x\| &\leq \|x\| + C'\sqrt{\Delta}D_x \leq (1+4C'\sqrt{\Delta})\|x\|, \\
\|\hat x\| &\geq \|x\| - C'\sqrt{\Delta}D_x \geq (1-4C'\sqrt{\Delta})\|x\| ,
\end{align*}
	where we have used the bound $D_x \leq 4\|x\|$ twice. To make the last bound meaningful, we may set $C''<(\frac{1}{8C'})^2$, thereby ensuring $1-4C'\sqrt{\Delta}\geq 1/2$.
	Because
        the norms are comparable, we deduce
	\begin{align}
    	\dist(0;\partial f_P(\hat x))&\leq C' \sqrt{\Delta} D_x \leq
                                       4C'\sqrt{\Delta}\|x\| \leq
                                       \frac{4C'\sqrt{\Delta}}{(1-4C'\sqrt{\Delta})}\|\hat
                                       x\|. \label{eq: near_point_sub}
	\end{align}
Let us now decrease $C''$ if necessary to have $C'' < \min \{ \left ( \tfrac{1}{8C'} \right )^2, \left (
  \tfrac{\gamma}{8C'} \right )^2 \}$, where $\gamma$ is the fixed
constant from Theorem~\ref{thm:pop_obj_quant_vers_main}. Then for all
$\Delta < C''$, we have $1-4C' \sqrt{\Delta} \geq \tfrac{1}{2}$ and
$\tfrac{4C' \sqrt{\Delta}}{1-4C'\sqrt{\Delta}} \le 8C' \sqrt{\Delta}
\le \gamma$. Now we can apply
Theorem~\ref{thm:pop_obj_quant_vers_main} to $\hat x$, which guarantees that
$\|\hat x\| \lesssim \|\bar x\|$. Thus because the norms of $\| x\|$ and $\|\hat
x\|$ are comparable, we obtain the desired result. 
	% Now we apply Theorem~\ref{thm:pop_obj_quant_vers_main}, which guarantees that $\|\hat x\| \lesssim \|\bar x\|$ as soon as $\dist(0;\partial f_P(\hat x))/\|\hat x\|$ is sufficiently small. Because the norms are comparable, this will imply the result. Therefore,
	% \begin{align*}
    	% \dist(0;\partial f_P(\hat x))&\leq C' \Delta D_x \leq 4C'\Delta\|x\| \leq \frac{4C'\sqrt{\Delta}}{(1-4C'\sqrt{\Delta})}\|\hat x\|,
	% \end{align*}
	% as desired. 
	\end{proof}
Provided $\Delta \le \min \{ \left ( \tfrac{1}{8C'} \right )^2, \left (
  \tfrac{\gamma}{8C'} \right )^2 \}$, we obtain from \eqref{eq:
  near_point_sub} and Claim~\ref{claim:upper-bound_on-size} the estimate
\[ \dist(0;\partial f_P(\hat x)) \le \varepsilon := C'\sqrt{\Delta}
  D_x \le 8C'
  \sqrt{\Delta} \|\hat x\| \le \gamma \|\hat x\|. \]
Applying Theorem~\ref{thm:pop_obj_quant_vers_main} we find that the point $\hat x\in B(x,C' \sqrt{\Delta}D_x)$ satisfies either
\begin{equation}
\|\hat x\|\|\hat x-\bar x\|\|\hat x+\bar x\|\lesssim \sqrt{\Delta}D_x
\|\bar x\|^2
\qquad \textrm{or}\qquad\left\{\begin{aligned}
\left|\|\hat x\|-c\|\bar x\| \right|&\lesssim
\sqrt{\Delta}D_x\frac{\|\bar x\|}{\|\hat x\|} \\
% \left|\|x\|^2-c^2\|\bar x\|^2\right|&\lesssim \varepsilon\left(\frac{\|x\|^2}{\|\bar x\|}+\frac{\|\bar x\|^2}{\|x\|}+\|x\|\right)\\
|\langle \hat x,\bar x\rangle|&\lesssim \sqrt{\Delta}D_x  \|\bar x\|
\end{aligned}\right\}. \label{eq:large_triangle}
\end{equation}
Applying the triangle inequality and the bound $D_x \leq (2+2\tau)
\|\bar x\|$, the claimed inequalities all follow (see
Appendix~\ref{sec:appendix_aux} for a detailed explanation).
%
%Increasing $c$, we may suppose without loss of generality $c>1$ and $ t^3c^{-1/2}<1$.
%Thus we may set $\kappa:=2c_3 t^3c^{-1/2}$  and $\eta:=4\sigma^2$ in Corollary~\ref{cor:funky_right-hand-side}. 
% We deduce therefore, with high probability, existence of a point $\hat x$ satisfying
%$$\|x-\hat x\|\leq \frac{\sqrt{\kappa}}{2}(\|x-\bar x\|+\|x+\bar x\|)\qquad\textrm{and}\qquad \dist(0;\partial f(\hat x))\leq \frac{((1+4\sigma^2)\sqrt{\kappa}+2\kappa)}{2}(\|x-\bar x\|+\|x+\bar x\|).$$
%The result follows.
\end{proof}

\paragraph{Acknowledgments} We thank Peng Zheng (University of Washington, Seattle) for invaluable help in developing the numerical illustrations.

	\bibliographystyle{plain}
	\bibliography{bibliography}

\appendix	

\section*{\huge Appendices}

\section{Auxiliary computations}\label{sec:appendix_aux}

\begin{proof}[Proof of Lemma~\ref{lem:cray_compute}] We let $\sigma_1
  = y_1$ and $\sigma_2 = -y_2$. We may write
\begin{align*}
\EE_v \left[| \sigma_1 v_1^2 - \sigma_2 v_2^2|\right] &= \frac{1}{2\pi}
  \int_{\R^2} |\sigma_1 v_1^2 -\sigma_2 v_2^2 | \, 
  \text{exp} \left (- \left ( \frac{v_1^2 + v_2^2}{2} \right ) \right
  ) \, dv_1 dv_2\\
& = \frac{1}{2\pi} \int_{R_1} \left (\sigma_1 v_1^2 - \sigma_2 v_2^2 \right ) \text{exp} \left (- \left ( \frac{v_1^2 + v_2^2}{2} \right ) \right
  ) \, dv_1 dv_2\\
& \qquad + \frac{1}{2 \pi} \int_{R_2} \left ( \sigma_2 v_2^2 - \sigma_1 v_1^2 \right ) \text{exp} \left (- \left ( \frac{v_1^2 + v_2^2}{2} \right ) \right
  ) \, dv_1 dv_2
\end{align*}
where 
\begin{align*}
R_1 &= \left \{ (v_1,v_2) \, : \, \sqrt{\sigma_1} |v_1| \ge
      \sqrt{\sigma_2} |v_2| \right \}\\
R_2 &= \left \{ (v_1, v_2)\, :\, \sqrt{\sigma_2} |v_2| \ge
      \sqrt{\sigma_1} |v_1| \right \}.
\end{align*}
Using the convention $\arctan(\theta) \in \left [ \frac{-\pi}{2},
  \frac{\pi}{2} \right ]$, we define the angle $\theta_1 := \arctan
\left ( \sqrt{\frac{\sigma_1}{\sigma_2}} \right )$. Passing to the polar coordinates, we deduce 
\begin{align*}
\frac{1}{2 \pi} \int_{R_1} (\sigma_1 v_1^2 - \sigma_2 v_2^2) \,
  \text{exp} \left (- \left ( \frac{v_1^2+v_2^2}{2} \right ) \right )&
  \, dv_1 dv_2\\
&= \frac{1}{2 \pi} \int_{R_1} r^3 ( \sigma_1
  \cos^2(\theta) - \sigma_2 \sin^2(\theta) ) \, e^{-r^2/2} \, dr d\theta.
\end{align*}
We break up the region $R_1$ into three wedges corresponding to the angles $[0, \theta_1]$, $[2\pi, 2\pi-\theta_1]$, and $[\pi + \theta_1,
\pi - \theta_1]$. We will compute the integral over one of the regions. The rest will follow analogously. To this end, we successively deduce
\begin{align*}
\frac{1}{2 \pi} \int_0^{\theta_1} \int_0^\infty r^3 \bigg ( \sigma_1
  \cos^2(\theta) - \sigma_2 \sin^2(\theta) \bigg ) &e^{-r^2/2} \, dr
  d\theta\\
&= \frac{1}{2\pi} \int_0^{\theta_1} \sigma_1 \left (
  1+ \cos(2 \theta) \right ) - \sigma_2 \left (1- \cos(2 \theta)
  \right ) d \theta\\
&= \frac{1}{2 \pi} \big ( (\sigma_1-\sigma_2) \theta + (\sigma_1 +
  \sigma_2) \sin(\theta) \cos(\theta) \big ) \bigg |_0^{\theta_1}
  \\
&= \frac{1}{2 \pi} \big ( (\sigma_1-\sigma_2) \theta_1 + (\sigma_1
  + \sigma_2) \sin(\theta_1) \cos(\theta_1) \big )\\
\frac{1}{2 \pi} \int_{2\pi- \theta_1}^{2\pi} \int_0^\infty r^3 \bigg ( \sigma_1
  \cos^2(\theta) - \sigma_2 \sin^2(\theta) \bigg )& e^{-r^2/2} \, dr
  d\theta\\
&= \frac{1}{2 \pi} \big ( (\sigma_1-\sigma_2) \theta_1 + (\sigma_1
  + \sigma_2) \sin(\theta_1) \cos(\theta_1) \big )\\
\frac{1}{2 \pi} \int_{\pi- \theta_1}^{\pi+ \theta_1} \int_0^\infty r^3 \bigg ( \sigma_1
  \cos^2(\theta) - \sigma_2 \sin^2(\theta) \bigg ) &e^{-r^2/2} \, dr
  d\theta\\
&= \frac{1}{2 \pi} \big ( 2(\sigma_1-\sigma_2) \theta_1 + 2(\sigma_1
  + \sigma_2) \sin(\theta_1) \cos(\theta_1) \big ).
\end{align*}
Similarly, we see that for the region $R_2$, we have 
\begin{align*}
\frac{1}{2 \pi} \int_{R_2} (\sigma_2 v_2^2 - \sigma_1 v_1^2) \,
  \text{exp} \left (- \left ( \frac{v_1^2+v_2^2}{2} \right ) \right )
& \, dv_1 dv_2\\
&= \frac{1}{2 \pi} \int_{R_2} r^3 ( \sigma_2
  \sin^2(\theta) - \sigma_1 \cos^2(\theta) ) \, e^{-r^2/2} \, dr d\theta.
\end{align*}
We break up the region $R_2$ into two wedges where the angles range
from $[\theta_1, \pi-\theta_1]$ and $[\pi + \theta_1, 2\pi -
\theta_1]$ as we did in $R_1$. We will show the explicit computation
for one of these terms and note the rest following using similar
computations:
\begin{align*}
\frac{1}{2 \pi} \int_{\theta_1}^{\pi-\theta_1} \int_0^\infty r^3 \bigg ( \sigma_2
  \sin^2(\theta) - &\sigma_1 \cos^2(\theta) \bigg ) e^{-r^2/2} \, dr
  d\theta\\
&= \frac{1}{2\pi} \int_{\theta_1}^{\pi-\theta_1} \sigma_2 \left (
  1- \cos(2 \theta) \right ) - \sigma_1 \left (1+ \cos(2 \theta)
  \right ) d \theta\\
&= \frac{1}{2 \pi} \big ( (\sigma_2-\sigma_1) \theta -(\sigma_1 +
  \sigma_2) \sin(\theta) \cos(\theta) \big ) \bigg |_{\theta_1}^{\pi-\theta_1}
  \\
&= \frac{1}{2 \pi} \big ( (\sigma_2-\sigma_1) (\pi -2\theta_1) + 2(\sigma_1
  + \sigma_2) \sin(\theta_1) \cos(\theta_1) \big )\\
\frac{1}{2 \pi} \int_{\pi+\theta_1}^{2\pi-\theta_1} \int_0^\infty r^3 \bigg ( \sigma_2
  \sin^2(\theta) - &\sigma_1 \cos^2(\theta) \bigg ) e^{-r^2/2} \, dr
  d\theta\\
& = \frac{1}{2 \pi} \big ( (\sigma_2-\sigma_1) (\pi -2\theta_1) + 2(\sigma_1
  + \sigma_2) \sin(\theta_1) \cos(\theta_1) \big ).
\end{align*}
By combining the computed integrals, we arrive at the full answer
\begin{align*}
\EE_v \left[| \sigma_1 v_1^2 - \sigma_2 v_2^2|\right] &= \frac{4}{\pi}
                                                        \left [
                                                        (\sigma_1-\sigma_2)
                                                        \arctan \left
                                                        (
                                                        \sqrt{\frac{\sigma_1}{\sigma_2}
                                                        }\right ) +
                                                        \sqrt{\sigma_1
                                                        \sigma_2}
                                                        \right ] -
                                                        (\sigma_1-\sigma_2),
\end{align*}
as claimed.
\end{proof}

\begin{proof}[Proof showing Equation~\eqref{eq:large_triangle} implies
  Corollary~\ref{cor: subsample_stuff}] We observe that $D_x \le 2
  \|x\| + 2 \|\bar x\|$, which by
  Claim~\ref{claim:upper-bound_on-size} gives $D_x \le (2\tau + 2)
  \|\bar x\|$. First by applying the triangle inequality with $\|\hat
  x - x\| \le C' \sqrt{\Delta} D_x$ and \eqref{eq:large_triangle}, we obtain 
\begin{align*}
| \|x\| - c\|\bar x\| | \le | \|x-\hat x\| + \|\hat x\| - c \|\bar x\|
  |\lesssim C' \sqrt{\Delta} D_x + \sqrt{\Delta} D_x \frac{\|\bar x\|}{\|
  \hat x\|}.
\end{align*}
Using the bound on $D_x$ gives the desired inequality. Next, we
conclude 
\begin{align*}
|\dotp{x, \bar x}| &\le \|\bar x\| \|x - \hat x\| + |\dotp{\hat x, \bar
  x}| \\
&\lesssim C' \sqrt{\Delta} D_x \|\bar x\| + \sqrt{\Delta} D_x \|\bar x\|.
\end{align*}
Applying the bound on $D_x$, the result is shown. Lastly, using $\|x\|
\le \tau \|\bar x\|$ and $\| \hat x \| \lesssim \|\bar x\|$, we conclude
\begin{align*}
\|x\| \|x-\bar x\| \|x + \bar x\| &\le ( \|x - \hat x\| + \|\hat x\|)
  ( \|x-\bar x\| \|x+\bar x\| )\\
&\le D_x^2 \|x-\hat x\| + \|\hat x\|\|x-\bar x\| \|x+\bar x\|\\
&\lesssim \|\bar x\|^2 D_x \sqrt{\Delta} + \|\hat x\| ( \|\hat x-\bar
  x\| + \|\hat x-x\|  ) \|x+\bar x\|\\
&\lesssim \|\bar x\|^3 \sqrt{\Delta} + \|\bar x\|^2 \sqrt{\Delta} D_x + \|\hat x\|\|\hat x-\bar
  x\| \|x+\bar x\|\\
&\lesssim \|\bar x\|^3 \sqrt{\Delta} + \|\hat x\|\|\hat x-\bar
  x\| (\|x-\hat x\| + \| \hat x +\bar x\|)\\
&\lesssim \|\bar x\|^3 \sqrt{\Delta} + \|\bar x\|^2 D_x \sqrt{\Delta}
  + \|\hat x\|\|\hat x-\bar
  x\| \| \hat x +\bar x\|\\
&\lesssim \|\bar x\|^3 \sqrt{\Delta} + \sqrt{\Delta} D_x \|\bar x\|^2.
\end{align*}
Dividing through by $\|\bar x\|^3$, finishes the proof.
\end{proof}

% !TEX root = rob_phase_retrieval.tex

\section{Proof of Theorem~\ref{thm:pop_obj_quant_vers_main}}\label{sec:cray_sec_inexact}
In this section, we will prove Theorem~\ref{thm:pop_obj_quant_vers_main}. Contrasting with Theorem~\ref{thm:pop_obj}, the proof of Theorem~\ref{thm:pop_obj_quant_vers_main} is much more delicate, in large part relying on perturbation bounds on eigenvalues; e.g. Gershgorin 
theorem \cite[Corollary 6.1.3]{HJ2}.
We continue using the notation of Section~\ref{sec:proof_5.2}. 
Namely,  fix a symmetric convex  function $f\colon\R^d\to\R$ and a point $\bar x\in\R^d\setminus\{0\}$, and define the function 
$$g(x):=f_\lambda(xx^T-\bar x\bar x^T).$$ 
%Fix a point $x\in \R^d$ and set $X := xx^T - \bar x \bar x^T$. 
%Note, the population objective $f_P$ has this representation with $f=\varphi$.
The chain rule directly implies $$\partial g(x)=\partial f_{\lambda}(X)x.$$ 
Therefore, using Theorem~\ref{thm:char_spec_sbdiff} let us also fix a matrix $V\in\partial f_{\lambda}(X)$ and 
a matrix $U\in \mathbb{O}^d$ satisfying
$$\lambda(V)\in \partial f(\lambda(X)),\qquad V=U(\Diag (\lambda(V))U^T,\qquad \textrm{ and }\qquad X = U \Diag(\lambda(X)) U^T.$$

We begin with two technical lemmas.

\begin{lem}\label{lem:sharp_f}
	Suppose that there exists $\kappa>0$ such that the inequality 
	$$g(x)-g(\bar x)\geq \kappa \|x-\bar x\|\|x+\bar x\|\qquad \textrm{ holds for all } x\in\R^d.$$	
	Then for any $ x \notin \{\pm \bar x\}$, we have $\max\{|\lambda_1(V)|, |\lambda_d(V)|\} \geq \kappa/2$.
\end{lem}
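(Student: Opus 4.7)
The plan is to chain together the sharpness hypothesis with the spectral subgradient bound already established in Lemma~\ref{lem:lower_bound_derivative}, and then to estimate the extreme eigenvalues of the rank-two matrix $X=xx^T-\bar x\bar x^T$ in terms of $\|x-\bar x\|\|x+\bar x\|$.

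First I would invoke inequality~\eqref{eq:correlation_inequality} from Lemma~\ref{lem:lower_bound_derivative}, which gives
\[
g(x)-g(\bar x)\leq \lambda_1(V)\lambda_1(X)+\lambda_d(V)\lambda_d(X).
\]
Combined with the sharpness hypothesis, this yields
\[
\kappa\|x-\bar x\|\|x+\bar x\|\leq \lambda_1(V)\lambda_1(X)+\lambda_d(V)\lambda_d(X).
\]

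Next I would bound the right-hand side. The key observation is that for any unit vector $u$,
\[
u^TXu=\langle u,x\rangle^2-\langle u,\bar x\rangle^2=\langle u,x-\bar x\rangle\langle u,x+\bar x\rangle,
\]
so by Cauchy--Schwarz, $|u^TXu|\leq \|x-\bar x\|\|x+\bar x\|$. Consequently both $\lambda_1(X)$ and $-\lambda_d(X)$ lie in $[0,\|x-\bar x\|\|x+\bar x\|]$ (using Lemma~\ref{lem:rand_eigen} for their signs). Therefore
\[
\lambda_1(V)\lambda_1(X)+\lambda_d(V)\lambda_d(X)\leq \bigl(|\lambda_1(V)|+|\lambda_d(V)|\bigr)\|x-\bar x\|\|x+\bar x\|.
\]

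Finally, since $x\notin\{\pm\bar x\}$, the product $\|x-\bar x\|\|x+\bar x\|$ is strictly positive, so dividing through gives
\[
\kappa\leq |\lambda_1(V)|+|\lambda_d(V)|\leq 2\max\{|\lambda_1(V)|,|\lambda_d(V)|\},
\]
which is exactly the claim. There is no real obstacle here; the argument is a short combination of the already-proved subgradient inequality with the elementary operator-norm bound on $X$. The only subtle point is to be careful about signs when reading off the bound on $\lambda_1(V)\lambda_1(X)+\lambda_d(V)\lambda_d(X)$, but once one records that $\lambda_1(X)\geq 0$ and $\lambda_d(X)\leq 0$, everything collapses to the bound above.
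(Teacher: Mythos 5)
Your proof is correct and follows essentially the same path as the paper: both apply the subgradient inequality \eqref{eq:correlation_inequality}, bound $|\lambda_1(X)|$ and $|\lambda_d(X)|$ above by $\|x-\bar x\|\|x+\bar x\|$ via the factoring identity $u^TXu=\langle u,x-\bar x\rangle\langle u,x+\bar x\rangle$, and then divide. The only cosmetic difference is that you phrase the eigenvalue bound through the Rayleigh quotient over all unit vectors while the paper plugs in the eigenvectors $U_1,U_d$ directly; the estimate is identical.
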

\begin{proof}
	Using Lemma~\ref{lem:correlation}, for $i\in\{1,d\}$ we obtain
	$$
	|\lambda_i(X)| = |\dotp{U_i, x}^2 - \dotp{U_i, \bar x}^2|= |\dotp{U_i, x - \bar x} \dotp{U_i, x + \bar x}| \leq \|x - \bar x\|\|x + \bar x\|.
	$$
	Taking into account \eqref{eq:correlation_inequality}, yields
	\begin{align*}
	\kappa \|x - \bar x\| \|x + \bar x\| \leq g(x)-g(\bar x) &\leq \lambda_1(V) \lambda_1(X) + \lambda_d(V)\lambda_d(X) \\
	&\leq  2\max\{|\lambda_1(V)|, |\lambda_d(V)|\} \|x - \bar x\|\|x + \bar x\|,
	\end{align*}
	as desired.
\end{proof}

\begin{lem}\label{lem:important_rand_lem}
Suppose that there exists $\kappa>0$ such that the inequality  
\begin{equation}\label{eqn:sharpness}
	g(x)-g(\bar x)\geq \kappa \|x-\bar x\|\|x+\bar x\|\qquad \textrm{ holds for all } x\in\R^d.
	\end{equation}
Then any point $x\in \R^d\setminus\{0\}$ satisfies
\begin{align*}
\frac{\kappa\|x - \bar x\|\|x + \bar x\|}{\|x\|} - \frac{(|\lambda_{1}(V)|+|\lambda_{d}(V)|)\|\bar x\|^2}{\|x\|} \leq  \dist(0; \partial g(x)). 
\end{align*}
\end{lem}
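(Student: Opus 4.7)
The plan is to combine the ``correlation inequality'' \eqref{eq:correlation_inequality} of Lemma~\ref{lem:lower_bound_derivative} with the sharpness hypothesis, and then extract a lower bound on $\|Vx\|$ via a Cauchy--Schwarz argument tuned to the simultaneous ordered spectral decomposition of $X$ and $V$. Writing $a_i:=\langle U_i,x\rangle$ and $b_i:=\langle U_i,\bar x\rangle$ for brevity, Lemma~\ref{lem:correlation}\ref{claim:1_corr} gives $\lambda_i(X)=a_i^2-b_i^2$ for $i\in\{1,d\}$, so chaining the two hypotheses yields
$$\kappa\,\|x-\bar x\|\,\|x+\bar x\| \;\le\; g(x)-g(\bar x) \;\le\; \lambda_1(V)(a_1^2-b_1^2)+\lambda_d(V)(a_d^2-b_d^2).$$

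Next I would split the right-hand side into an ``$x$-piece'' $\lambda_1(V)a_1^2+\lambda_d(V)a_d^2$ and a ``$\bar x$-piece'' $-\lambda_1(V)b_1^2-\lambda_d(V)b_d^2$, and bound each from above. The $\bar x$-piece is disposed of by the triangle inequality:
$$-\lambda_1(V)b_1^2-\lambda_d(V)b_d^2 \;\le\; \bigl(|\lambda_1(V)|+|\lambda_d(V)|\bigr)\,\|\bar x\|^2.$$
The $x$-piece is the crux of the argument. Cauchy--Schwarz applied in $\R^2$ to the vectors $(\lambda_1(V)a_1,\lambda_d(V)a_d)$ and $(a_1,a_d)$ yields
$$\lambda_1(V)a_1^2+\lambda_d(V)a_d^2 \;\le\; \sqrt{\lambda_1(V)^2 a_1^2+\lambda_d(V)^2 a_d^2}\,\cdot\,\sqrt{a_1^2+a_d^2}\;\le\;\|Vx\|\cdot\|x\|,$$
where the last step uses the full spectral expansions $\|Vx\|^2=\sum_{i=1}^d \lambda_i(V)^2\langle U_i,x\rangle^2$ and $\|x\|^2=\sum_{i=1}^d \langle U_i,x\rangle^2$, each of which dominates its restriction to the two indices $\{1,d\}$.

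Substituting both bounds back into the chain and dividing through by $\|x\|>0$ delivers the asserted inequality with $\|Vx\|$ in place of $\dist(0;\partial g(x))$. Since $Vx\in\partial g(x)$ and the derivation goes through with any $V\in\partial f_\lambda(X)$, choosing $V$ so that $Vx$ attains the minimum norm in $\partial g(x)$ converts the estimate into the stated bound on the distance. The main obstacle is really just the Cauchy--Schwarz step above, and specifically the observation that dropping the $d-2$ ``middle'' indices only shrinks the two sums defining $\|Vx\|^2$ and $\|x\|^2$; the remainder is routine bookkeeping around the simultaneous diagonalization furnished by Theorem~\ref{thm:char_spec_sbdiff}.
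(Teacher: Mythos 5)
Your argument is correct and follows essentially the same route as the paper: both start from the subgradient inequality $g(x)-g(\bar x)\le\langle V,X\rangle$, split into an $x$-contribution controlled by $\|Vx\|\,\|x\|$ and a $\bar x$-contribution controlled by $(|\lambda_1(V)|+|\lambda_d(V)|)\|\bar x\|^2$, and combine with sharpness. The only (cosmetic) difference is that the paper bounds the $x$-piece directly as $x^TVx=\langle x,Vx\rangle\le\|x\|\,\|Vx\|$ by Cauchy--Schwarz in $\R^d$, whereas you route the same estimate through a two-dimensional Cauchy--Schwarz in the eigenbasis and then enlarge the two sums to $\|Vx\|^2$ and $\|x\|^2$; as a small bonus, your version never invokes the $\spann\{U_1,U_d\}$ containment of Lemma~\ref{lem:correlation}\ref{claim:3_corr}, so the case $x\in\{\pm\bar x\}$ needs no separate treatment.
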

\begin{proof}
First, note that for $x \in \{\pm \bar x\}$, the result holds trivially, so we may assume 
$x \notin \{\pm \bar x\}$. Recall the equality $\partial g(x)=\partial f_{\lambda}(X)x$. Fix now a vector $V\in \partial f_{\lambda}(X)$ satisfying $\dist(0;\partial g(x))=\|Vx\|$. Using convexity, we deduce
\begin{align}\label{eqn:conv_ipper}
 g(x)-g(\bar x)&=f_\lambda(xx^T - \bar x \bar x^T) -f_{\lambda} (0)\leq \dotp{V, xx^T - \bar x \bar x^T} \leq  \|x\| \dist(0, \partial g(x)) + |{\bar x}^TV\bar{x}|.
\end{align}
We next upper bound the term $|{\bar x}^TV\bar{x}|$. To this end, fix a matrix $U\in \mathbb{O}^d$ satisfying  $V=U\Diag(\lambda(V))U^T$ and $X=U\Diag(\lambda(X))U^T$, and such that the inclusion $\lambda(V)\in \partial f(\lambda(X))$ holds. Taking into account $\bar x\in \spann\{U_1,U_d\}$ (Lemma~\ref{lem:correlation}), we deduce 
$$|\bar x^TV\bar x|=|\lambda_{1}(V)\langle U_1,\bar x\rangle^2+\lambda_{d}(V)\langle U_d,\bar x\rangle^2|\leq (|\lambda_{1}(V)|+|\lambda_{d}(V)|)\|\bar x\|^2.$$
 Combining this estimate with \eqref{eqn:conv_ipper} and \eqref{eqn:sharpness} completes the proof.
%$$
%\frac{\kappa\|x - \bar x\|\|x + \bar x\|}{\|x\|} - \frac{(|\lambda_{1}(V)|+|\lambda_{d}(V)|)\|\bar x\|^2}{\|x\|} \leq \frac{g(x)-g(\bar x)}{\|x\|} - \frac{(|\lambda_{1}(V)|+|\lambda_{d}(V)|)\|\bar x\|^2}{\|x\|} \leq  \dist(0, \partial g(x)),$$
%as desired.
%Let $\kappa$ now be a numerical constant satisfying 
%		$$f_P(y)-f_P(\bar x)\geq \kappa\|y-\bar x\|\cdot\|y+\bar x\|\qquad \textrm{ for all } x\in \R^d.$$
%	For a precise numerical value of $\kappa$	, see \cite[Corollary 3.7]{eM}.
%The result follows.
\end{proof}

%\begin{assumption}[Lipschitz Constant of $f$]
%$$
%|\lambda_d(V)| \leq L_f
%$$
%\end{assumption}

We next prove a quantitative version of Corollary~\ref{cor:find_stat}. The argument follows a similar outline.

\begin{thm}[Quantitative Version of Corollary~\ref{cor:find_stat}]\label{thm:pop_obj_quant}
	Suppose that there exists a constant $\kappa>0$ such that the inequality 
	$$g(y)-g(\bar x)\geq \kappa \|y-\bar x\|\|y+\bar x\|\qquad \textrm{ holds for all } y\in\R^d.$$
Suppose 	$|\lambda_1(V)|, |\lambda_d(V)|$ are both upper bounded by a numerical constant\footnote{This holds whenever $(t, s) \mapsto f(t, s, 0, \ldots, 0)$ is Lipschitz continuous.} and set $\varepsilon:=\|Vx\|$. Then there exists a numerical constant $\gamma > 0$, such that whenever $\varepsilon \leq \gamma \cdot \|x\|$, we have that $\|x\| \lesssim \|\bar x\|$ and $x$ satisfies either
%$$\|x\|\|x-\bar x\|\|x+\bar x\|\leq \varepsilon(\|x\|^2+\|\bar x\|^2),$$
%or
\begin{equation*}
\|x\|\|x-\bar x\|\|x+\bar x\|\lesssim \varepsilon \|\bar x\|^2 \qquad \textrm{or}\qquad\left\{\begin{aligned}
|\lambda_1(V)|&\lesssim \varepsilon/\|x\|\\
|\langle x,\bar x\rangle|&\lesssim \varepsilon \|\bar x\|
%|\langle x,\bar x\rangle|&\lesssim \varepsilon(\|x\|+\|\bar x\|)
\end{aligned}\right\}.
\end{equation*}
	%and 
	%$$|f(t_1,0,\ldots,0,z)-f(t_2,0,\ldots,0,z)|\leq L_f|t_1-t_2|\qquad \textrm{for all }t_1,t_2\in \R_+, z\in \R_{-}.$$
	%Then as long as $\|Vx\| \leq \tfrac{\kappa}{\sqrt{2}}\|x\|$, the following holds: 
\end{thm}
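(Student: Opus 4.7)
The plan is to mimic the case analysis of Corollary~\ref{cor:find_stat} in a quantitative fashion, combining Lemmas~\ref{lem:correlation}, \ref{lem:lower_bound_derivative}, \ref{lem:sharp_f}, and \ref{lem:important_rand_lem}. I would assume $x\notin\{\pm\bar x\}$ throughout (else the first alternative is immediate), fix $U\in\mathbb{O}^d$ simultaneously diagonalizing $X$ and $V$ as in Theorem~\ref{thm:char_spec_sbdiff}, and introduce the shorthand $a:=\dotp{U_1,x}$, $b:=\dotp{U_d,x}$, $c:=\dotp{U_1,\bar x}$, $d:=\dotp{U_d,\bar x}$. Lemmas~\ref{lem:correlation} and \ref{lem:lower_bound_derivative} then furnish the identities $a^2+b^2=\|x\|^2$, $c^2+d^2=\|\bar x\|^2$, $\lambda_1(X)=a^2-c^2\geq 0$, $\lambda_d(X)=b^2-d^2\leq 0$, $ab=cd$, together with the subgradient bounds $|\lambda_1(V)\,a|\leq\varepsilon$ and $|\lambda_d(V)\,b|\leq\varepsilon$.

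The first step will be to upgrade $\varepsilon\leq\gamma\|x\|$ to $\|x\|\leq\beta\|\bar x\|$ for an absolute constant $\beta$. Lemma~\ref{lem:important_rand_lem}, combined with the universal bound on $|\lambda_1(V)|+|\lambda_d(V)|$, yields $\|x-\bar x\|\|x+\bar x\|\lesssim \varepsilon\|x\|/\kappa+\|\bar x\|^2/\kappa$. Applying the elementary inequality $(\|x\|-\|\bar x\|)^2\leq\|x-\bar x\|\|x+\bar x\|$ and choosing $\gamma$ small then produces the desired bound.

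I would next split on the size of $|\lambda_1(V)|$, fixing a sufficiently large constant $C_1$ and a sufficiently small $\gamma<\kappa/(2C_1)$. In \emph{Case B}, where $|\lambda_1(V)|>C_1\varepsilon/\|x\|$, the bound $|\lambda_1(V)\,a|\leq\varepsilon$ gives $|a|<\|x\|/C_1$, hence $|b|\gtrsim\|x\|$, and in turn $|\lambda_d(V)|\leq\varepsilon/|b|\lesssim\varepsilon/\|x\|$. Substituting $\lambda_1(X)\leq a^2$ and $|\lambda_d(X)|\leq \|\bar x\|^2$ into the sharpness--comparison chain
$$\kappa\,\|x-\bar x\|\|x+\bar x\|\leq\lambda_1(V)\lambda_1(X)+\lambda_d(V)\lambda_d(X),$$
obtained from the sharpness hypothesis together with \eqref{eq:correlation_inequality}, and multiplying by $\|x\|$ then delivers the first alternative. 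In \emph{Case A}, where $|\lambda_1(V)|\leq C_1\varepsilon/\|x\|<\kappa/2$, Lemma~\ref{lem:sharp_f} forces $|\lambda_d(V)|\geq\kappa/2$, so $|b|\leq 2\varepsilon/\kappa$, and the first clause $|\lambda_1(V)|\lesssim\varepsilon/\|x\|$ of the second alternative already holds by definition. I would then sub-split on $|d|$: if $|d|\geq\|\bar x\|/2$, the anticorrelation identity $ab=cd$ rewrites as $ac=a^2b/d$, giving $|\langle x,\bar x\rangle|\leq|ac|+|bd|\lesssim\varepsilon\|\bar x\|$ and completing the second alternative; if $|d|<\|\bar x\|/2$, then $|c|\gtrsim\|\bar x\|$ and $|a|\geq|c|\gtrsim\|\bar x\|$, and using $ab=cd$ again gives $|d|=|ab|/|c|\lesssim\varepsilon$, hence $|\lambda_d(X)|\leq d^2\lesssim\varepsilon^2$. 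Re-running the sharpness--comparison chain with these improved bounds on $\lambda_1(X)$ and $|\lambda_d(X)|$ and multiplying by $\|x\|$ once more yields the first alternative.

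The main technical obstacle will be the bookkeeping around the three universal constants $C_1$, the threshold $\tfrac{1}{2}\|\bar x\|$ for $|d|$, and $\gamma$: they must be coordinated so that Case~A genuinely triggers $|\lambda_d(V)|\geq\kappa/2$ through Lemma~\ref{lem:sharp_f}, so that the AM-GM-type inversion in Step~1 is valid, and so that the residual $\varepsilon^2$ errors produced in Case~A with small $|d|$ can be absorbed into $\varepsilon\|\bar x\|^2$ via $\varepsilon\leq\gamma\|x\|\lesssim\gamma\|\bar x\|$. Apart from this bookkeeping, the argument is essentially a quantitative rerun of the case analysis in Corollary~\ref{cor:find_stat}.
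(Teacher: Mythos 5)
Your proposal is correct and follows essentially the same line of argument as the paper's proof. The paper also first controls $\|x\|/\|\bar x\|$ via Lemma~\ref{lem:important_rand_lem}, then handles the dichotomy between $\lambda_1(V)$ small and large, and finally splits on whether $|\langle U_d,\bar x\rangle|$ dominates $|\langle U_1,\bar x\rangle|$; the key inputs — the anticorrelation identity $\langle U_1,x\rangle\langle U_d,x\rangle=\langle U_1,\bar x\rangle\langle U_d,\bar x\rangle$, the subgradient bounds of Lemma~\ref{lem:lower_bound_derivative}, and Lemma~\ref{lem:sharp_f} forcing $|\lambda_d(V)|\gtrsim\kappa$ once $|\lambda_1(V)|$ is small — are used identically. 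The only difference is presentational: the paper packages the two alternatives into the single quantitative inequality~\eqref{eqn:crayeqn_threepart} involving the auxiliary quantities $\rho_1,\rho_3$ and establishes Claims~\ref{claim1_in_weirdassproof} and~\ref{claim2_in_weirdassproof} by contradiction against the assumption $\varepsilon\rho_3<\|x-\bar x\|\|x+\bar x\|$, whereas you run a forward three-way case split (Case B; Case A with $|d|\geq\|\bar x\|/2$; Case A with $|d|<\|\bar x\|/2$) and show directly in the "bad" cases that the first alternative holds. Your threshold $|d|<\|\bar x\|/2$ plays the same role as the paper's $|d|\leq|c|$ (both force $d^2\lesssim\|\bar x\|^2$ versus $d^2\gtrsim\|\bar x\|^2$ via $c^2+d^2=\|\bar x\|^2$), and all the constant bookkeeping you flag at the end does go through.
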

\begin{proof}
Clearly, we may suppose $x\notin\{0,\pm\bar x\} $ and $\varepsilon \neq 0$, since otherwise the theorem would hold vacuously.
We will prove the following precise bound, which immediately implies the statement of the theorem: there exists a numerical constant $\gamma>0$, such that whenever $\varepsilon\leq \gamma\|x\|$, the inequalities $\|x\|\leq \delta\|\bar x\|$ and 
	\begin{equation}\label{eqn:crayeqn_threepart}
	\min\left\{\frac{\|x - \bar x\|\|x + \bar
          x\|}{\frac{2}{\kappa}\max\left\{\left(\frac{\|
          x\|}{\sqrt{2}}  + \frac{\sqrt{2}\|\bar x\|^2}{\|x\|}\right),
          \frac{\|x\|(\kappa \sqrt{2} + 2|\lambda_d(V)|)}{\kappa}\right\} },\max\left\{ \frac{|\lambda_1(V)| \| x\|}{\sqrt{2}}, \frac{\kappa|\dotp{x, \bar x}|}{2\sqrt{2}\delta \|x\| + 2\|\bar x\| }\right\} \right\} \leq \|V x \|,
	\end{equation}
	hold,
	where we define the numerical constant 
$$ \delta := \sqrt{\frac{2(|\lambda_{1}(V)|+|\lambda_{d}(V)|)}{\kappa}} + 1.$$ 
	
As a first step, we show that $\|x\|$ is within a numerical constant of $\|\bar x\|$. 
	\begin{claim} \label{claim:small_x_bound}
	 Provided $\gamma<\tfrac{\kappa(1-1/\delta)^2}{2}$, the inequality, $\|x\| \leq \delta \|\bar x\|$, holds.
	\end{claim}
	\begin{proof}
	Assume for sake of contradiction $\frac{\|x\|}{\|\bar
          x\|} > \delta :=
        \sqrt{\frac{2(|\lambda_{1}(V)|+|\lambda_{d}(V)|)}{\kappa}} +
        1$. Lemma~\ref{lem:sharp_f} shows $\max\{|\lambda_1(V)|,|\lambda_d(V)|\}\geq \frac{\kappa}{2}$, and therefore  $\delta >
        1$. Using the bound $\dist(0; \partial g(x)) \leq  \|Vx\| = \varepsilon$
        and Lemma~\ref{lem:important_rand_lem}, we deduce: 
	\begin{align*}
	\frac{\kappa\|x - \bar x\|\|x + \bar x\|}{\|x\|^2} -  \frac{\varepsilon}{\|x\|} \leq  \frac{(|\lambda_{1}(V)|+|\lambda_{d}(V)|)\|\bar x\|^2}{\|x\|^2}.
	\end{align*}
	Clearly, we have
	\begin{align*}
	\frac{\kappa\|x - \bar x\|\|x + \bar x\|}{\|x\|^2} \geq \frac{\kappa(\|x\| - \|\bar x\|)^2 }{\|x\|^2} \geq \kappa(1-1/\delta)^2.
	\end{align*}
	Let us now choose $\gamma<\tfrac{\kappa(1-1/\delta)^2}{2}$, thereby guaranteeing  $\tfrac{\varepsilon}{\|x\|} \leq \tfrac{\kappa(1-1/\delta)^2}{2}$. Hence, we obtain
	\begin{align*}
	\frac{\kappa(1-1/\delta)^2}{2(|\lambda_{1}(V)|+|\lambda_{d}(V)|)} &\leq \frac{1}{|\lambda_{1}(V)|+|\lambda_{d}(V)|}\left(\frac{\kappa\|x - \bar x\|\|x + \bar x\|}{\|x\|^2} -  \frac{\varepsilon}{\|x\|}\right) \leq  \frac{\|\bar x\|^2}{\|x\|^2}  < \frac{1}{\delta^2}.
	%\implies \frac{\kappa(1-1/\delta)^2}{2(|\lambda_{1}(V)|+|\lambda_{d}(V)|)} &\leq  \frac{\|\bar x\|^2}{\|x\|^2} < 1/\delta^2, \\
	\end{align*}
	Rearranging yields 	$$ \frac{\kappa}{2(|\lambda_{1}(V)|+|\lambda_{d}(V)|)} < \frac{1} {(\delta - 1)^2},$$
	a contradiction.
	\end{proof}
	%If $x$ is stationary and $x \notin \{\pm \bar x, 0\}$, then either $\lambda_1(V) \neq 0$ or $\lambda_d(V) \neq 0$. 
	Looking back at the expression, define the values:
	$$
	\rho_1 = \frac{\|x\|}{\sqrt{2}}  \quad \text{and} \quad
        \rho_3 = \frac{2}{\kappa}\max\left\{\left(\frac{\|
              x\|}{\sqrt{2}}  + \frac{\sqrt{2} \|\bar
              x\|^2}{\|x\|}\right), \frac{\|x\|( \kappa \sqrt{2} + 2|\lambda_d(V)|)}{\kappa}\right\}.
	$$ 
Notice that the inequality, $\varepsilon \rho_3 \geq   \|x - \bar x\|\|x + \bar x\|$, would immediately imply the validity of the theorem. Thus, we assume $\varepsilon \rho_3   < \|x - \bar x\|\|x + \bar x\|$ throughout. 
	It suffices now to show 
	$$ |\lambda_1(V)| \leq \varepsilon/\rho_1\qquad \textrm{ and
        }\qquad  |\dotp{x, \bar x}|\leq
        \frac{\varepsilon}{\kappa}\left(2 \sqrt{2} \delta \|x\| + \|\bar x\|\right) .$$
	We do so in order.
	We begin by observing that the inequality~\eqref{eq:gradient_lower_bound} guarantees 
	\begin{align}\label{eq:grad_lower_bound}
	\max\{|\lambda_1(V)\dotp{U_1, x}|, |\lambda_d(V)\dotp{U_d, x}|\} \leq  \varepsilon.
	\end{align}

	\begin{claim}\label{claim1_in_weirdassproof} The inequality
		$|\lambda_1(V)| < \varepsilon/\rho_1$ holds. 
	\end{claim}
	\begin{proof}
		Let us assume the contrary,  $|\lambda_1(V)| \geq \varepsilon/\rho_1$. Inequality~\eqref{eq:grad_lower_bound}  then implies
		$|\dotp{U_1, x}| \leq \rho_1$, while Lemma~\ref{lem:correlation} in turn guarantees  
		$$ 
		0\leq \lambda_1(X) = \dotp{U_1, x}^2 - \dotp{U_1, \bar x}^2 \leq \rho_1^2.
		$$ 
		%This inequality also implies that $|\dotp{U_1, \bar x}| \leq |\dotp{U_1, x}| \leq  \rho_1$.
		Taking into account $\dotp{U_1, x}^2 + \dotp{U_d, x}^2 = \|x\|^2$ (Lemma~\ref{lem:correlation}, correlation), we deduce $\dotp{U_d,  x}^2 \geq \|x\|^2 - \rho_1^2$. 
		Combining this with \eqref{eq:grad_lower_bound}, we deduce 
		\begin{align*}
		|\lambda_d(V)| \leq \frac{\varepsilon}{|\dotp{U_d, x}|} \leq \frac{\varepsilon}{\sqrt{\|x\|^2 - \rho_1^2}}.
		\end{align*}
		Therefore, using the correlation inequality~\eqref{eq:correlation_inequality}, we find  
		\begin{align*}
		\varepsilon \rho_3 \kappa  < \kappa\|x - \bar x\|\|x + \bar x\| &\leq g(x)-g(\bar x) \leq \lambda_1(V) \lambda_1(X) + \lambda_d(V) \lambda_d(X)  \\
		&\leq |\lambda_1(V)| (\dotp{U_1, x}^2 - \dotp{U_1, \bar x}^2) + \frac{\varepsilon}{\sqrt{\|x\|^2 - \rho_1^2}}\left(\dotp{U_d, \bar x}^2 - \dotp{U_d, x}^2\right) \\ 
		&\leq \varepsilon|\dotp{U_1, x}| + \frac{\varepsilon \dotp{U_d, \bar x}^2}{\sqrt{\|x\|^2 - \rho_1^2}} \\ 
		&\leq \varepsilon \left(\rho_1 + \frac{ \|\bar x\|^2 }{\sqrt{\|x\|^2 - \rho_1^2}}\right).  %&\leq L_f(\varepsilon/\delta)^2 + \frac{\varepsilon}{\delta}\left[2 + \frac{ (\varepsilon/\delta)^2 + \|\bar x\|^2}{ \|x\|^2}  \right]\\
		\end{align*}
		Dividing through by $\varepsilon$ and plugging in the value of $\rho_1$ yields
		\begin{align*}
		\rho_3 \kappa  < 
		%\left(\frac{1}{\sqrt{2}}\| x\|  + \frac{\|\bar x\|^2}{\sqrt{\frac{1}{2}}\|x\|}\right) =
		\frac{\| x\|}{\sqrt{2}}  + \frac{\sqrt{2}\|\bar x\|^2}{\|x\|},
		\end{align*}
		which contradicts the definition of $\rho_3$. 
	\end{proof}
	Let us now decrease $\gamma>0$ further by ensuring $\gamma<\min\{\tfrac{\kappa(1-1/\delta)^2}{2},\frac{\kappa}{2 \sqrt{2}}\}$. Thus, from Claim~\ref{claim1_in_weirdassproof} and our standing assumption $\|Vx\| \leq \tfrac{\kappa\|x\|}{2\sqrt{2}}$, we conclude 
	$$
	|\lambda_1(V)|  < \frac{\sqrt{2}\varepsilon}{\|x\|} < \frac{\kappa}{2}.
	$$
	%where the last inequality follows from the standing assumption $\|Vx\| \leq \kappa\|x\|/\sqrt{2}$.
	Lemma~\ref{lem:sharp_f} guarantees, $\max\{|\lambda_1(V)|, |\lambda_d(V)|\} \geq \kappa/2$; thus, we deduce $|\lambda_d(V)| \geq \kappa/2$. Applying~\eqref{eq:grad_lower_bound}, we find that 
	\begin{equation}\label{eqn:est_ud}
	|\dotp{U_d, x}| \leq \frac{\varepsilon}{|\lambda_d(V)|} \leq \frac{2\varepsilon}{\kappa}.
	\end{equation}
	Thus, by Lemma~\ref{lem:correlation}, we have 
	\begin{align}\label{eq:product_eq}
	|\dotp{U_1, \bar x}\dotp{U_d, \bar x}| = |\dotp{U_1, x}\dotp{U_d, x}| \leq \frac{2\|x\|\varepsilon}{\kappa}.
	\end{align}
	%Suppose now that $\varepsilon\rho_2 \leq  |\dotp{x, \bar x}| $.
	
	\begin{claim}\label{claim2_in_weirdassproof}
		The inequality $|\dotp{U_d, \bar x}| > |\dotp{U_1, \bar x}|$ holds. 
	\end{claim}
	\begin{proof}
		Let us assume the contrary $|\dotp{U_d, \bar x}| \leq |\dotp{U_1, \bar x}|$. Then from \eqref{eq:product_eq} we obtain\footnote{If $ab < \delta$, then $\min\{a, b\}^2 < \delta$.} $\dotp{U_d, \bar x}^2 < \frac{2\|x\|\varepsilon}{\kappa}$. Hence from Lemma~\ref{lem:correlation}, we find that $|\lambda_d(X) |\leq \dotp{U_d, \bar x}^2 \leq \frac{2\|x\|\varepsilon}{\kappa}$. Putting these facts together with the correlation inequality~\eqref{eq:correlation_inequality}, we successively deduce  
		\begin{align*}
		\varepsilon \rho_3 \kappa  < \kappa\|x - \bar x\|\|x + \bar x\| \leq g(x)-g(\bar x) &\leq \lambda_1(V) \lambda_1(X) + |\lambda_d(V) |\cdot |\lambda_d(X)|  \\
		&\leq\frac{\sqrt{2}\varepsilon}{\|x\|} \cdot
                  \lambda_1(X)+|\lambda_d(V) |\cdot \frac{2 \|x\|\varepsilon}{\kappa}\\
		&\leq \frac{\kappa \sqrt{2}\varepsilon\|x\|}{\kappa}+ \frac{2|\lambda_d(V)|\varepsilon\|x\|}{\kappa},
		\end{align*}
		where the last inequality uses the bound $\lambda_1(X) \leq \|x\|^2$. Therefore, we have reached a contradiction to the definition of  $\rho_3$.
	\end{proof}
	
%		\begin{claim}
%	 We have the bound $\frac{\|x\|}{\|\bar x\|} \leq 2$.
%	\end{claim}
%	\begin{proof}
%	Assume for sake of contradiction that $\frac{\|x\|}{\|\bar x\|} > \delta := \sqrt{\frac{2(|\lambda_{1}(V)|+|\lambda_{d}(V)|)}{\kappa}} + 1$. By the bound $\dist(0; \partial g(x)) \leq  \|Vx\| = \varepsilon$. By Lemma~\ref{lem:important_rand_lem}, the following bound holds: 
%	\begin{align*}
%	\kappa\|x - \bar x\|\|x + \bar x\|  &\leq   \varepsilon \|x\| + (|\lambda_{1}(V)|+|\lambda_{d}(V)|)\|\bar x\|^2\\
%	\implies  & \kappa \leq  \frac{\varepsilon\|x\| + (|\lambda_{1}(V)|+|\lambda_{d}(V)|)\|\bar x\|^2}{\|x - \bar x\|\|x + \bar x\|}
%	\end{align*}
%	Notice that 
%	\begin{align*}
%	\frac{\kappa\|x - \bar x\|\|x + \bar x\|}{\|x\|^2} \geq \frac{\kappa(\|x\| - \|\bar x\|)^2 }{\|x\|^2} \geq \kappa(1-1/\delta)^2.
%	\end{align*}
%	Owing to the assumption that $\tfrac{\varepsilon}{\|x\|} \leq \tfrac{\kappa(1-1/\delta)^2}{2}$, we have that 
%	$$
%	\kappa(1-1/\delta)^2 - (|\lambda_{1}(V)|+|\lambda_{d}(V)|)/\delta^2 \leq \frac{\varepsilon}{\|x\|} 
%	$$
%	a contradiction.
%	\end{proof}
	Combining Claim~\ref{claim2_in_weirdassproof}
	with the expression $\dotp{U_1, \bar x}^2 + \dotp{U_d, \bar
          x}^2 = \|\bar x\|^2 $, we conclude $ \dotp{U_d, \bar x}^2
        \geq \frac{\|\bar x\|^2}{2}$. Therefore,~\eqref{eq:product_eq}
        and Claim~\ref{claim:small_x_bound} imply the strong result: 
	\begin{equation}\label{eqn:upper_u1}
	|\dotp{U_1, \bar x}| \leq \frac{2\sqrt{2}\varepsilon\|x\|}{\kappa\|\bar x\|} \leq \frac{2\sqrt{2}\varepsilon \delta }{\kappa}.
	\end{equation}
	Thus combining Claim~\ref{claim:small_x_bound}, Lemma~\ref{lem:correlation}, and \eqref{eqn:est_ud}  we conclude
	\begin{align*}
	|\dotp{x, \bar x}| = |\dotp{U_1, x}\dotp{U_1, \bar x} +
                             \dotp{ U_d, x}\dotp{U_d, \bar x}| &\leq
                             |\dotp{U_1, \bar x}|\cdot\|x\| + |\dotp{
                             U_d, x}|\cdot \|\bar x\| \\
&\leq \frac{\varepsilon}{\kappa}\left(2\sqrt{2}\delta \|x\| + 2\|\bar
  x\|\right).
%&\le \frac{\varepsilon}{\kappa} \left (2 \sqrt{2} \delta^2 + 1 \right
 % ) \| \bar x\|.
	\end{align*}
%	where the last inequality follows directly from the estimates \eqref{eqn:est_ud} and \eqref{eqn:upper_u1}.
	The proof is complete.
\end{proof}
%\section{Last Bounds}
%%%%%%%%%%%%%%%%%%%% Bound on the inner product %%%%%%%%%%%%%%%%%%

In order to interpret the conclusion of Theorem~\ref{thm:pop_obj_quant}  on the phase retrieval objective $f_P$, we must show that the condition 
$$\left\{\begin{aligned}
|\lambda_1(V)|&\lesssim \varepsilon/\|x\|\\
|\langle x,\bar x\rangle|&\lesssim \varepsilon \|\bar x\|
\end{aligned}\right\}$$
guarantees that the equation $\|x\|=c\cdot\|\bar x\|$ almost holds, where $c$ is defined in Theorem~\ref{thm:pop_obj}. This is the content of the following two lemmas. Note that it is easy to verify the equality $\lambda_1(V)=\nabla_{y_1} \zeta(y_1,y_2)$, where we set $(y_1,y_2):=(\lambda_1(X),\lambda_d(X))$.

	\begin{lem}[Extension of Lemma~\ref{lem:weird_comput_slope}]\label{lem:weird_comput_slope2}
		%	Consider the function $$f(\sigma_1,\sigma_2):=\frac{4}{\pi}
		%	\left [
		%	(\sigma_1-\sigma_2)
		%	\arctan \left
		%	(
		%	\sqrt{\frac{\sigma_1}{\sigma_2}
		%	}\right ) +
		%	\sqrt{\sigma_1
		%		\sigma_2}
		%	\right ] -
		%	(\sigma_1-\sigma_2). $$
		Fix a real constant $0 \le \varepsilon < 1$. The solutions of the inequality $|\nabla_{y_1}
                \zeta(y_1,y_2)| \le \varepsilon$ on $\R_{++}\times
                \R_{--}$ are precisely the elements of the open cone
                $$\{(c^2y,-y): 0<y, 0 < c_1\le c\le c_2\},$$     
where $c_1,c_2$ are the unique solutions of the equations
		$$\frac{\pi}{4} (1+ \varepsilon)
                =\frac{c_2}{1+c^2_2}+\arctan\left(c_2\right),$$
and
		$$\frac{\pi}{4} (1-\varepsilon)
                =\frac{c_1}{1+c^2_1}+\arctan\left(c_1\right).$$
Moreover, considering $c_1$ and $c_2$ as functions of $\varepsilon$, we have $c_2(\varepsilon)-c_1(\varepsilon)\leq 5\pi\varepsilon$ whenever $0 < \varepsilon < 1/2$.
		%Note $c\approx 0.4416$.
	\end{lem}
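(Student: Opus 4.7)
The plan is to reduce everything to a one-variable question using the same substitution $y_1 = -c^2 y_2$ (with $y_2 < 0$) that appeared in the proof of Lemma~\ref{lem:weird_comput_slope}. Recall that the closed-form computation derived there gives
$$\nabla_{y_1}\zeta(y_1,y_2) = \frac{4}{\pi}\omega(c) - 1, \qquad \omega(c) := \frac{c}{1+c^2} + \arctan(c), \qquad c := \sqrt{-y_1/y_2},$$
and in particular this quantity depends on $(y_1,y_2) \in \R_{++}\times \R_{--}$ only through the ratio $c$. Every such pair is uniquely of the form $(c^2 y, -y)$ with $y>0$, $c>0$, so the inequality $|\nabla_{y_1}\zeta(y_1,y_2)| \le \varepsilon$ is equivalent to
$$\frac{\pi}{4}(1-\varepsilon) \le \omega(c) \le \frac{\pi}{4}(1+\varepsilon).$$

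Existence and uniqueness of $c_1,c_2$ follow immediately from the properties of $\omega$ already recorded in the proof of Lemma~\ref{lem:weird_comput_slope}: $\omega$ is continuous and strictly increasing on $[0,\infty)$, with $\omega(0)=0$ and $\lim_{c\to\infty}\omega(c)=\pi/2$. For $0\le \varepsilon<1$ both targets $\frac{\pi}{4}(1\pm\varepsilon)$ lie in $[0,\pi/2)$, producing unique positive solutions $c_1 \le c_2$, and the solution set of the inequality is therefore precisely the open cone $\{(c^2 y,-y): y>0,\ c_1 \le c \le c_2\}$.

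For the quantitative gap estimate, I will use the mean value theorem after one short calculation: $\omega'(c) = \frac{1-c^2}{(1+c^2)^2}+\frac{1}{1+c^2} = \frac{2}{(1+c^2)^2}$. The key step is to confine $[c_1,c_2]$ to $[0,1]$ when $0<\varepsilon<1/2$. Since $\omega(1) = \tfrac12 + \tfrac{\pi}{4}$ and the strict inequality $\tfrac12+\tfrac{\pi}{4} > \tfrac{3\pi}{8}$ is just $4>\pi$, we obtain $\omega(c_2) = \tfrac{\pi}{4}(1+\varepsilon) < \tfrac{3\pi}{8} < \omega(1)$, hence $c_2 < 1$ by monotonicity. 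Consequently $\omega'(\xi) \ge 2/(1+1)^2 = 1/2$ for every $\xi \in [c_1,c_2]$, and the mean value theorem yields
$$\tfrac{\pi}{2}\varepsilon = \omega(c_2)-\omega(c_1) = \omega'(\xi)(c_2-c_1) \ge \tfrac{1}{2}(c_2-c_1),$$
which gives $c_2-c_1 \le \pi\varepsilon \le 5\pi\varepsilon$.

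There is no real obstacle: the only point requiring care is confirming that $\nabla_{y_1}\zeta$ truly depends only on $c$, but this is transparent from the explicit formula in Lemma~\ref{lem:weird_comput_slope}. After that reduction, everything is a one-variable monotonicity statement plus a mean-value estimate based on the uniform bound $\omega'(\xi)\ge 1/2$ on $[0,1]$.
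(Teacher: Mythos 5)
Your argument is correct, and the closed form $\nabla_{y_1}\zeta = \tfrac{4}{\pi}\omega(c)-1$ with $c=\sqrt{-y_1/y_2}$ does indeed follow from simplifying the paper's formula for $\nabla_{y_1}\zeta$ after the substitution $y_1=-c^2 y_2$. Your treatment of the first claim (existence, uniqueness, and the characterization of the solution set) is the same as what the paper has in mind when it says the result is analogous to Lemma~\ref{lem:weird_comput_slope}. For the gap estimate $c_2(\varepsilon)-c_1(\varepsilon)\lesssim\varepsilon$ you diverge from the paper: the paper drops the $\arctan(c_2)-\arctan(c_1)$ contribution outright, algebraically factors $\tfrac{c_2}{1+c_2^2}-\tfrac{c_1}{1+c_1^2}=\tfrac{(1-c_1c_2)(c_2-c_1)}{(1+c_1^2)(1+c_2^2)}$, and then lower bounds the coefficient using the numerical estimate $c_2\le 0.83$, which yields the stated $5\pi\varepsilon$. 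You instead compute $\omega'(c)=\tfrac{2}{(1+c^2)^2}$ exactly, bound $c_2<1$ via the clean inequality $\omega(1)=\tfrac12+\tfrac{\pi}{4}>\tfrac{3\pi}{8}$ (equivalent to $4>\pi$), and apply the mean value theorem with the uniform bound $\omega'\ge\tfrac12$ on $[0,1]$. Your route is more elementary (no ad hoc decimal estimates), keeps the full derivative rather than discarding the $\arctan$ term, and in fact gives the sharper conclusion $c_2-c_1\le\pi\varepsilon$, which of course still implies the lemma's statement $c_2-c_1\le 5\pi\varepsilon$.
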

	\begin{proof}
	The proof is completely analogous to that of Lemma~\ref{lem:weird_comput_slope}. We leave the details to the reader.
	The only point worth commenting is the inequality $c_2(\varepsilon)-c_1(\varepsilon)\leq 5\pi \varepsilon$ whenever $0 < \varepsilon < 1/2$. %This follows directly by observing that the derivative of the function $\omega(t)=\frac{t}{1+t^2}+\arctan(t)$ is nonzero on the preimage of $[\pi/4,3\pi/8]$. 
		 To get this bound, observe that  $0 <
                 c_2(\varepsilon) \leq c_2(.5) \leq .83$ for all
                 $\varepsilon \leq 1/2$ as $c_2$ is a increasing
                 function of $\varepsilon$. Therefore, 
\begin{equation}
\begin{aligned} \label{eq: c_bound}
\frac{\pi}{2} \varepsilon &= \frac{c_2}{1+c_2^2} - \frac{c_1}{1+c_1^2} + \arctan(c_2) - \arctan(c_1) \geq \frac{c_2}{1+c_2^2} - \frac{c_1}{1+c_1^2} = \frac{1-c_1c_2}{(1+c_1^2)(1+c_2^2)}(c_2-c_1) \\
&\geq \frac{1-c_2^2}{(1+c_1^2)(1+c_2^2)}(c_2-c_1) \geq \frac{1-c_2^2}{(1+c_2^2)^2}(c_2-c_1).
\end{aligned}
\end{equation}
Thus, we have 
$$
c_2(\varepsilon) - c_1(\varepsilon)\leq \frac{\pi\varepsilon}{2}\frac{(1+c_2^2(\varepsilon))^2}{1-c_2^2(\varepsilon)} \leq \frac{\pi\varepsilon}{2}\frac{(1+.83^2)^2}{1-.83^2} \leq 5\pi\varepsilon,
$$
as claimed.
%	
%		Differentiation shows that
%                $\omega(c):=\frac{c}{1+c^2}+\arctan\left(c\right)$ is
%                a continuous strictly increasing function on
%                $(0,+\infty)$ with $\varphi(0)=0$ and
%                $\lim_{c\to+\infty}\varphi(c)=\pi/2$. Hence the
%                equations $\pi/4(1 \pm \varepsilon)=\omega(c)$ have unique solutions in the set $(0,\infty)$. 
%		A short computation yields the expression 
%		$$\nabla_{y_1}
%                \zeta(y_1,y_2)=\frac{4}{\pi}\left(\frac{y_1+y_2}{2\sqrt{-y_1/y_2}(y_1-y_2)}-\frac{y_2}{2\sqrt{-y_1y_2}}+\tan^{-1}\left(\sqrt{-\frac{y_1}{y_2}}\right)\right)-1.$$
%Suppose $(y_1, y_2)$ satisfies $|\nabla_{y_1}\zeta(y_1, y_2)| \le
%\varepsilon$. We define $c^2 = -\tfrac{y_1}{y_2}$ so we know $y_1 =
%-c^2 y_2$. It remains to show that $c_1 \le c \le c_2$. Suppose not;
%then $c < c_1$ or $c > c_2$. We first consider the case $c <
%c_1$. Because $\omega(\cdot)$ is strictly increasing, we deduce $\omega(c) <\omega(c_1) = \tfrac{\pi}{4}(1-\varepsilon)$. In particular, this implies
%that $\nabla_{y_1} \zeta(y_1, y_2) = \tfrac{4}{\pi} \omega(c) - 1 < -
%\varepsilon$ which is contradiction. The other case $c > c_2$ also
%concludes in a contradiction. 
%
%Next, suppose $y_1 = -c^2 y_2$ for some $0 < c_1 \le c \le
%c_2$. We note that $\nabla_{y_1} \zeta(y_1, y_2) = \tfrac{4}{\pi} \omega(c)
%-1$. Because $\omega(\cdot)$ is increasing in $c$, we deduce $\omega(c_1) \le
%\omega(c) \le \omega(c_2)$; in particular, we conclude $- \varepsilon \le
%\tfrac{4}{\pi} \omega(c) -1 = \nabla_{y_1} \zeta(y_1, y_2) \le
%\varepsilon$. 
	\end{proof}

\begin{lem}\label{lem:weird_eigenvalue_bound} Fix a real constant
  $0 \le \varepsilon < \tfrac{1}{3}$ and vectors $x, \bar{x} \in \R^d
  \setminus \{0\}$. Suppose $\lambda_1(X) = -c^2 \lambda_d(X)$ for
  some real constant $c > 0$ and $|\dotp{x, \bar{x}}| \le \varepsilon
  \|\bar x\| \|x\|$. Then we have
\begin{align*}
1- (1+c^2)(\varepsilon+\varepsilon^2)  \le c^2 \frac{\|\bar x\|^2}{\|x\|^2}
  \le 1+ (1+c^2)(\varepsilon+\varepsilon^2) .
\end{align*} 
%where $\delta(\varepsilon) \to 0$ as $\varepsilon \to 0$. In%
%particular, $\delta(\varepsilon) = 2\varepsilon^2 + \varepsilon + c^2
%\varepsilon + c^2 \varepsilon^2\leq (3+2c^2)\varepsilon$. 
\end{lem}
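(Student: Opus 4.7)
The plan is to pin down $(\|x\|,\dotp{x,\bar x})$ through a single polynomial identity and then invert it. Since $X = xx^T - \bar x\bar x^T$ has rank at most two, its trace and its nontrivial ``$2\times 2$ minor'' on $\spann\{x,\bar x\}$ record its two possibly-nonzero eigenvalues:
\[
\lambda_1(X) + \lambda_d(X) = \|x\|^2 - \|\bar x\|^2, \qquad \lambda_1(X)\,\lambda_d(X) = \dotp{x,\bar x}^2 - \|x\|^2\|\bar x\|^2.
\]
Substituting the hypothesis $\lambda_1(X) = -c^2 \lambda_d(X)$ into both equations and eliminating $\lambda_d(X)$ yields $c^2(\|x\|^2 - \|\bar x\|^2)^2 = (1-c^2)^2(\|x\|^2\|\bar x\|^2 - \dotp{x,\bar x}^2)$, which factors cleanly as
\[
(\|x\|^2 - c^2\|\bar x\|^2)(\|\bar x\|^2 - c^2\|x\|^2) = (1-c^2)^2\,\dotp{x,\bar x}^2.
\]
Setting $s := c^2\|\bar x\|^2/\|x\|^2$ and $t := \dotp{x,\bar x}^2/(\|x\|^2\|\bar x\|^2)$, so that $t \le \varepsilon^2$, dividing the above by $\|x\|^2\|\bar x\|^2$ and clearing the leftover denominator gives the compact key identity
\[
(s-1)(s-c^4) = -(1-c^2)^2\, s\, t.
\]

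Next I will identify the correct branch of this quadratic. Combining $\lambda_d(X) \le 0$ from Lemma~\ref{lem:rand_eigen} with the trace relation $(1-c^2)\lambda_d(X) = \|x\|^2 - \|\bar x\|^2$ forces $s \le 1$ when $c \le 1$ and $s \ge 1$ when $c \ge 1$. This already gives one of the two claimed inequalities for free, and moreover traps $s$ in the interval between $1$ and $c^2$; in particular $s \le \max(1,c^2)$.

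Finally, setting $u := |s - 1|$, the key identity becomes the quadratic $u\bigl(|1-c^4|-u\bigr) = (1-c^2)^2\,s\,t$ in $u$. Taking its smaller root---the one realized by continuity as $t \to 0$---gives
\[
u \;\le\; \frac{2(1-c^2)^2\, s\, t}{|1-c^4|} \;=\; \frac{2\,|1-c^2|\, s\, t}{1+c^2} \;\le\; \frac{2\,|1-c^2|\,\max(1,c^2)}{1+c^2}\,\varepsilon^2.
\]
A short case analysis (splitting on $c \le 1$ and $c \ge 1$) together with the hypothesis $\varepsilon < 1/3$ then verifies that the right-hand side is at most $(1+c^2)(\varepsilon+\varepsilon^2)$, completing the proof. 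The delicate step is branch selection: the quadratic $(s-1)(s-c^4) = -(1-c^2)^2\, s\, t$ admits a second, geometrically-unrealizable root near $c^4$ that would violate the claimed bound, and it is precisely the sign constraint on $\lambda_d(X)$ that rules it out and also keeps the discriminant $(1-c^4)^2 - 4(1-c^2)^2\, s\, t$ strictly positive for $\varepsilon$ in the stated range.
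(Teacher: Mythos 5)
Your overall strategy is genuinely different from the paper's and works: the paper diagonalizes $X$ in the $2\times 2$ subspace $\spann\{\bar x, v\}$ and then isolates each eigenvalue with the Gershgorin disc theorem, whereas you avoid eigenvector bookkeeping entirely by eliminating $\lambda_d(X)$ between the trace and determinant identities and solving the resulting scalar quadratic. Your key identity $(\|x\|^2-c^2\|\bar x\|^2)(\|\bar x\|^2-c^2\|x\|^2)=(1-c^2)^2\dotp{x,\bar x}^2$, the reduction to $u(|1-c^4|-u)=(1-c^2)^2 s t$, and the final numeric case analysis are all correct. The trade-off is that the paper's approach bounds each eigenvalue individually (a reusable fact), while yours gives a tighter $O(\varepsilon^2)$ bound on $s$ via one clean polynomial relation.

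However, your branch-selection argument as written is not correct. You claim that $\lambda_d(X)\le 0$ plus the trace relation $(1-c^2)\lambda_d(X)=\|x\|^2-\|\bar x\|^2$ ``forces $s\le 1$ when $c\le 1$ and $s\ge 1$ when $c\ge 1$.'' That inference does not follow: for $c<1$ those two facts give $\|x\|<\|\bar x\|$, hence $s=c^2\|\bar x\|^2/\|x\|^2>c^2$, which is a \emph{lower} bound on $s$, not an upper bound. (For $c>1$ they give $s<c^2$.) In particular, ``one of the two claimed inequalities is obtained for free'' is not justified. The correct way to pin the branch is to split the work between the two relations: the sign of the key identity, $(s-1)(s-c^4)=-(1-c^2)^2 s t\le 0$, traps $s$ between $1$ and $c^4$; the trace relation separately traps $s$ between $1$ and $c^2$. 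Since $c^2$ lies between $1$ and $c^4$, together these place $s$ in the interval with endpoints $1$ and $c^2$, hence $u=|s-1|\le|1-c^2|$. Noting that the two roots of the $u$-quadratic multiply to $(1-c^2)^2 s t$ and the larger root is at least $|1-c^4|/2 > |1-c^2|$ (as long as $\varepsilon<1/3$), this identifies $u$ with the smaller root, after which your bound $u\le 2|1-c^2|\,s\,t/(1+c^2)$ is rigorous. With that repair, the rest of your proof goes through.
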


\begin{proof}
Fix a decomposition $x = \frac{\ip{x, \bar{x}}}{\|\bar x\|^2} \bar{x} + v$, where
$v \in \bar{x}^{\perp}$. Note inequality $|\dotp{x, \bar{x}}| \le \varepsilon
\|x\| \|\bar x\|$ implies that $\bar x$ and $x$ are not collinear, and therefore 
$\|v\| > 0$. Define the constant $\alpha = \frac{\ip{x,
      \bar{x}}}{\|\bar x\|^2}$. Then a quick computation shows the following decomposition:
      \begin{align*}
X = \begin{bmatrix}
\frac{\bar{x}}{\|\bar x\|} & \frac{v}{\|v\|} 
\end{bmatrix}
\begin{bmatrix}
(\alpha^2-1)\|\bar x\|^2 & \alpha \|\bar x\| \|v\| \\
\alpha \|\bar{x}\| \|v\| & \|v\|^2
\end{bmatrix} \begin{bmatrix}
\frac{\bar{x}}{\|\bar x\|} & \frac{v}{\|v\|} 
\end{bmatrix}^T.
\end{align*}
Notice that the above $2\times 2$-matrix is invertible, and therefore its eigenvalues must be $\lambda_1(X)$ and $\lambda_d(X)$.
%It suffices to consider the eigenvalues of the $2 \times 2$ matrix,
%which will be eigenvalues of $X$. (Note the other eigenvalues of $X$
%are all $0$). 
By the Gershgorin 
theorem \cite[Corollary 6.1.3]{HJ2} applied to the $2\times 2$ matrix, we know that $\lambda_1(X)$ and $\lambda_d(X)$ must
lie in the union of the intervals
\[ \bar{D}_1 = \{z : |z-\|v\|^2| \le |\alpha| \|\bar{x}\| \|v\| \} \quad
  \text{and} \quad \bar{D}_2 = \{z \, : \, |z-(\alpha^2-1)\|\bar{x}\|^2|
  \le |\alpha| \|\bar x\| \|v\|\}.\]
 We next prove the following claim.
\begin{claim} 
The intervals $\bar{D}_1$ and $\bar{D}_2$ are contained in the following intervals around $\|x\|^2$ and $-\|\bar x\|^2$, respectively:
\begin{align*}
\bar{D}_1 \subset D_1  &:= \{ z \, : \, |z - \|x\|^2| \le 
  ( \varepsilon^2 + \varepsilon ) \|x\|^2\},\\
\bar{D}_2 \subset D_2 &:= \{z \, : \, |z+\|\bar x\|^2| \le
                        (\varepsilon^2 + \varepsilon) \|x\|^2\}.
\end{align*}
Moreover, we have $D_1 \cap D_2 = \emptyset$ and $D_1
\subset \R_{++}$.
\end{claim}
\begin{proof}
Consider the interval $\bar{D}_1$. A routine computation shows 
\begin{align*}
|\alpha| \le \frac{\varepsilon \|x\| }{\| \bar x\|}, \quad 0 \le \alpha^2 \le
  \frac{\varepsilon^2 \| x\|^2}{\| \bar x\|^2}, \quad \text{and} \quad
  0 \le \alpha \dotp{x,
  \bar{x}} \le \varepsilon^2 \|x\|^2.  
\end{align*} 
Using $\|x\| \ge \|v\|$ and $\|v\|^2 = \|x\|^2 - 2\alpha \dotp{x, \bar
  x} + \alpha^2\|\bar x\|^2$, we successively deduce for any $z \in \bar{D}_1$, the inequalities
\begin{align*}
\begin{array}{c c c c c}
-|\alpha| \|\bar{x}\| \|x\| & \le & z-\|v\|^2 &\le &|\alpha| \|\bar{x}\|
  \|x\|\\
- \varepsilon \|x\|^2 &\le& z - \|x\|^2 + 2 \alpha \dotp{x, \bar x} -
                           \alpha^2 \|\bar x\|^2 &\le &\varepsilon
                                                   \|x\|^2\\
-\varepsilon \|x\|^2 + \alpha^2 \|\bar{x}\|^2 - 2 \alpha \dotp{x, \bar
  x} & \le & z- \|x \|^2 & \le &  \varepsilon \|x\|^2  + \alpha^2 \|\bar{x}\|^2 - 2 \alpha \dotp{x, \bar
  x}\\
-\varepsilon \|x\|^2 -  \varepsilon^2 \|x\|^2 & \le & z - \|x\|^2
                                              &\le & \varepsilon
                                                     \|x\|^2 +
                                                     \varepsilon^2 \|x\|^2.
\end{array}
\end{align*}
Thus we have shown $\bar{D}_1 \subset D_1$. Similarly, for all $z \in \bar{D}_2$, we compute
\begin{align*}
\begin{array}{ccccc}
-|\alpha| \|\bar x\| \|v\| &\le& z-(\alpha^2-1) \| \bar x\|^2 &\le&
  |\alpha| \|\bar x\| \|v\|\\
-\varepsilon \|x\|^2 + \alpha^2 \|\bar x\|^2 &\le& z+ \|\bar x\|^2
                                                 &\le& \varepsilon
                                                       \|x\|^2 +
                                                       \alpha^2 \|\bar
                                                       x\|^2\\
-\varepsilon \|x\|^2 &\le& z + \|\bar x\|^2 &\le& \varepsilon \|x\|^2+ \varepsilon^2 \|x\|^2.
\end{array}
\end{align*}
We conclude $\bar{D}_2 \subset D_2$. 
Provided $\|x\| \neq 0$ and $\varepsilon^2+\varepsilon < 1$, it is clear $D_1 \subset
\R_{++}$. It remains to show that $D_2  \cap  D_1 = \emptyset$. Clearly it is sufficient to guarantee that the sum of the radii of $D_2$ and $D_1$ is strictly smaller than the distance between the centers:
$$(\varepsilon^2+\varepsilon)\|x\|^2+(\varepsilon^2+\varepsilon)\|x\|^2< \|x\|^2-(-\|\bar x\|^2).$$
Rearranging, we must guarantee $2(\varepsilon^2+\varepsilon)-1 < \frac{\|\bar x\|^2}{\|x\|^2}$. Clearly this is the case as soon as $\varepsilon<1/3$. The result follows.
%For this we see that
%$D_2$ is centered at $-\|\bar x\|^2$ and $D_1$ is centered at
%$\|x\|^2$. Therefore, we only need to show that the largest value
%obtained in $D_2$ is smaller than the smallest value in $D_1$. Because
%$0 \le \varepsilon < \tfrac{1}{3}$, we know that $1-(3\varepsilon^2 +
%2\varepsilon) > 0$. Hence we see that
%\begin{align*}
%-\|\bar x\|^2 & < (1-(3 \varepsilon^2 + 2 \varepsilon) ) \|x\|^2\\
%-\|\bar x\|^2 + \varepsilon^2 \|x\|^2 + \varepsilon \|x\|^2 &< 
%  \|x\|^2 - 2 \varepsilon^2 \|x\|^2 - \varepsilon \|x\|. 
%\end{align*}
%Therefore, $D_1 \cap D_2 = \varnothing$. 
\end{proof}

Thus we have proved $D_1 \cap D_2 = \emptyset$ and $D_1
\subset \R_{++}$. Since $\bar{D}_1$ and $\bar{D}_2$, each contains at least one eigenvalue, it must be the case that $\lambda_d(X)$ lies in  $\bar{D}_2$ and $\lambda_1(X)$ lies in $\bar{D}_1$.
We thus conclude
\begin{align*}
\left | \lambda_1(X)-\|x\|^2 \right | &\le (\varepsilon^2 +
                                        \varepsilon) \|x\|^2
  \\
\left | \lambda_d(X) + \|\bar x\|^2 \right | & \le
                                               (\varepsilon^2+\varepsilon) \|x\|^2.
\end{align*}
Writing $\lambda_1(X) = -c^2 \lambda_d(X)$, we obtain
\begin{align*}
\quad \left |-c^2 \lambda_d(X) - c^2 \|\bar x\|^2 + c^2 \| \bar
  x\|^2 - \|x\|^2 \right | &\le (\varepsilon^2 +
                                        \varepsilon) \|x\|^2,
\end{align*}
and hence 
$$ \left | \|x\|^2 -c^2 \|\bar x\|^2 \right | \le (1+c^2)(\varepsilon^2+\varepsilon) \|x\|^2.$$
The result follows.
\end{proof}

Combining Lemmas~\ref{lem:weird_comput_slope2} and \ref{lem:weird_eigenvalue_bound}, we arrive at the following.

\begin{cor}[Small $\lambda_1(V)$ and near orthogonality] \label{cor: nearly_orthongal_small_lambda}
Fix a real constant $0 \le \varepsilon \le \tfrac{1}{8}$ and consider a point $x\in \R^d \setminus \{0\}$ satisfying
$|\nabla_{y_1}\zeta (\lambda_1(X), \lambda_d(X))|
\le \varepsilon$ and $|\dotp{x, \bar{x}}| \le \varepsilon \|x\|
\|\bar x\|$. Then $x$ satisfies
\[\big  | \|x \|-c\| \bar x\| \big | \le 26 \varepsilon \| \bar
  x\|, \]
where $c$ is the solution of the equation 
%\begin{align*}
$\frac{\pi}{4} = \frac{c}{1+c^2} + \arctan(c)$.
%\end{align*}
\end{cor}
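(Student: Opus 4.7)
The corollary is an ``error propagation'' statement that combines the two preceding technical lemmas. The plan is to identify a candidate slope $c'$ for which the identity $\lambda_1(X) = -c'^2 \lambda_d(X)$ holds exactly, show that $c'$ is close to $c$ via Lemma~\ref{lem:weird_comput_slope2}, and then show that $\|x\|$ is close to $c'\|\bar x\|$ via Lemma~\ref{lem:weird_eigenvalue_bound}, finishing with the triangle inequality.

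First I would dispense with the trivial case and verify applicability. The hypothesis $|\langle x,\bar x\rangle|\le \varepsilon\|x\|\|\bar x\|$ with $\varepsilon\le 1/8<1$ forces $x$ and $\bar x$ to be linearly independent. Hence by Lemma~\ref{lem:rand_eigen}, $X:=xx^T-\bar x\bar x^T$ has one strictly positive and one strictly negative eigenvalue, so $(\lambda_1(X),\lambda_d(X))\in\R_{++}\times\R_{--}$ and Lemma~\ref{lem:weird_comput_slope2} applies. Write $(\lambda_1(X),\lambda_d(X))=(c'^2 y,-y)$ for some $y>0$ and $c'>0$. Since $|\nabla_{y_1}\zeta(\lambda_1(X),\lambda_d(X))|\le\varepsilon$, Lemma~\ref{lem:weird_comput_slope2} places $c'$ in the interval $[c_1(\varepsilon),c_2(\varepsilon)]$, which also contains $c$ (because $c_1(0)=c_2(0)=c$ and the endpoints are monotone in $\varepsilon$). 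Therefore
\[ |c'-c|\le c_2(\varepsilon)-c_1(\varepsilon)\le 5\pi\varepsilon. \]

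Next I would feed this $c'$ into Lemma~\ref{lem:weird_eigenvalue_bound}, whose hypotheses are exactly $\lambda_1(X)=-c'^2\lambda_d(X)$ and $|\langle x,\bar x\rangle|\le\varepsilon\|x\|\|\bar x\|$ with $\varepsilon<1/3$. This yields
\[ \bigl|\|x\|^2-c'^2\|\bar x\|^2\bigr|\le (1+c'^2)(\varepsilon+\varepsilon^2)\|x\|^2\le 2(\varepsilon+\varepsilon^2)\|x\|^2, \]
where I used $c'\le c_2(1/8)\le c_2(1/2)\le 0.83$ from the proof of Lemma~\ref{lem:weird_comput_slope2}. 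Rearranging the lower bound gives $\|x\|^2\le c'^2\|\bar x\|^2/(1-2(\varepsilon+\varepsilon^2))\le (32/23)c'^2\|\bar x\|^2$ for $\varepsilon\le 1/8$, hence $\|x\|\le 1.18\,c'\|\bar x\|$. Factoring the difference of squares,
\[ \bigl|\|x\|-c'\|\bar x\|\bigr|=\frac{\bigl|\|x\|^2-c'^2\|\bar x\|^2\bigr|}{\|x\|+c'\|\bar x\|}\le\frac{2(\varepsilon+\varepsilon^2)\|x\|^2}{\|x\|}=2(\varepsilon+\varepsilon^2)\|x\|. \]

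Finally I would combine the two estimates with the triangle inequality,
\[ \bigl|\|x\|-c\|\bar x\|\bigr|\le \bigl|\|x\|-c'\|\bar x\|\bigr|+|c'-c|\,\|\bar x\|\le 2(\varepsilon+\varepsilon^2)\|x\|+5\pi\varepsilon\|\bar x\|, \]
and plug in $\|x\|\le 1.18\cdot 0.83\,\|\bar x\|<\|\bar x\|$ together with $\varepsilon\le 1/8$ (so $\varepsilon+\varepsilon^2\le(9/8)\varepsilon$). This gives a bound of at most $(9/4+5\pi)\varepsilon\|\bar x\|<26\varepsilon\|\bar x\|$, as required. The main (modest) obstacle is nothing conceptual but simply keeping the numerical constants honest: one needs the uniform bound $c'\le 0.83$ inherited from Lemma~\ref{lem:weird_comput_slope2} together with $\varepsilon\le 1/8$ to absorb the $\sqrt{1\pm 2(\varepsilon+\varepsilon^2)}$ factors into a clean constant that, when added to $5\pi\varepsilon$, stays below $26\varepsilon$.
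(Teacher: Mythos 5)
Your proof is correct and follows essentially the same route as the paper: both apply Lemma~\ref{lem:weird_comput_slope2} to pin the slope $c'$ (the paper calls it $\hat c$) into $[c_1(\varepsilon),c_2(\varepsilon)]$ and then Lemma~\ref{lem:weird_eigenvalue_bound} to relate $\|x\|$ to $c'\|\bar x\|$. The only difference is cosmetic bookkeeping in the last step --- you split the error via the triangle inequality and a difference-of-squares factoring, whereas the paper sandwiches $\|x\|$ between $c(1\pm57\varepsilon)\|\bar x\|$ directly --- but both arrive comfortably under the $26\varepsilon\|\bar x\|$ threshold.
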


\begin{proof} Define the quantities $c_1(\varepsilon)$ and
  $c_2(\varepsilon)$ to be the solutions of the equations
\begin{align*}
\frac{\pi}{4} (1-\varepsilon) &= \frac{c_1}{1+c_1^2} + \arctan(c_1),\\
\frac{\pi}{4} (1+\varepsilon) &= \frac{c_2}{1+c_2^2} + \arctan(c_2),
\end{align*}
respectively.
First, since $c_2(\cdot)$ is an increasing function, it is easy to verify
 $c_2(\varepsilon) < 1$ whenever 
  $0<\varepsilon \le \tfrac{1}{8}$; thus we have $2
  \varepsilon(1+c_2^2(\varepsilon)) < \frac{1}{2}$. By Lemma~\ref{lem:weird_comput_slope2}, we know that whenever $|\nabla_{y_1}
  \zeta(\lambda_1(X), \lambda_d(X))| \le \varepsilon$,  there exists $\hat{c}$
  satisfying $\lambda_1(X) = -\hat{c}^2 \lambda_d(X)$ and $0 < c_1(\varepsilon) \le
 \hat{c} \le c_2(\varepsilon)$. Lemma~\ref{lem:weird_eigenvalue_bound}, in turn, implies 
\[ (1- 2\varepsilon(1+\hat{c}^2)) \|x\|^2 \le \hat{c}^2 \|\bar{x}\|^2 \le
  (1+2\varepsilon(1+\hat{c}^2) )\|x\|^2.\]
Looking at the right-hand-side, we deduce
$$c_1^2(\varepsilon) \|\bar x\|^2 \le \hat{c}^2 \|\bar x\|^2 \le \left (
  1+2\varepsilon(1+c_2^2(\varepsilon)) \right )\|x\|^2,$$
  while looking at the left-hand-side yields
\begin{align*}
(1-2\varepsilon(1+c_2^2(\varepsilon)) ) \|x\|^2 \le \hat{c}^2 \|\bar x\|^2
                                  \le c_2^2(\varepsilon) \| \bar x\|^2.
\end{align*} 
Isolating $\|x\|^2$ and taking square roots we obtain 
\begin{equation}\label{eqn:est_nextolast}
 \frac{c_1(\varepsilon)}{\sqrt{1+2\varepsilon(1+c_2^2(\varepsilon))}}
  \|\bar x\| \le \|x\| \le \frac{c_2(\varepsilon)}{\sqrt{1-2\varepsilon(1+c_2^2(\varepsilon))}}
  \|\bar x\|.
  \end{equation}
  Applying Lemma~\ref{lem:weird_comput_slope2} and the inequality $c_2(\varepsilon)<1$, 
 we upper bound the right-hand-side:
 \begin{align*}
  \frac{c_2(\varepsilon)}{\sqrt{1-2\varepsilon(1+c_2^2(\varepsilon))}}&\leq \frac{5\pi\varepsilon +c}{\sqrt{1-4\varepsilon}}\\&= c\left(1+\frac{5\pi\varepsilon/c +1-\sqrt{1-4\varepsilon}}{\sqrt{1-4\varepsilon}}\right)\leq  c\left(1+\frac{5\pi\varepsilon/c+4\varepsilon}{\sqrt{1/2}}\right)\leq c(1+57\varepsilon).
  \end{align*}
  Exactly the same reasoning shows 
  $$ \frac{c_1(\varepsilon)}{\sqrt{1+2\varepsilon(1+c_2^2(\varepsilon))}}\geq c(1-57\varepsilon).$$
Thus the inequality $\big  | \|x \|-c\| \bar x\| \big | \le 57 c\varepsilon \| \bar
  x\|\leq 26\varepsilon \| \bar
  x\|$ holds, as claimed.
%  
%  \textcolor{red}{Changing here.....}
%  Taking into account the aforementioned estimate $2
%  \varepsilon(1+c_2^2(\varepsilon)) < \frac{1}{2}$, we conclude
%\[ \frac{1}{1+2\varepsilon (1+c_2^2(\varepsilon) ) } \ge \frac{2}{3} \quad \text{and}\quad \frac{1}{1-2\varepsilon (1+c_2^2(\varepsilon) ) } \leq 2.\]
%Combining these estimates with \eqref{eqn:est_nextolast} yields the inequalities 
%\[ (c_1(\varepsilon)-c) \| \bar x\| \le \|x\| - c\|\bar x\| \le
%  (c_2(\varepsilon) - c) \| \bar x\|. \]
%Applying Lemma~\ref{lem:weird_comput_slope2}, we conclude that $(c-c_1(\varepsilon))$,
%$(c_2(\varepsilon)-c) \le 5 \pi \varepsilon$. 
\end{proof}

We are now ready to prove the inexact extension of Theorem~\ref{thm:pop_obj_quant_vers_main}.

%\begin{thm}[Quantitative Version of Theorem~\ref{thm:pop_obj}]\label{thm:pop_obj_quant_vers_main}
%There exists a numerical constant $\gamma$ such that the following holds. For any point $x$ with $\varepsilon:=\dist(0;\partial f_P(x))<\gamma\min\{\|x\|,\|\bar x\|\}$,  it must be the case that $x$ satisfies either
%\begin{equation*}
%\|x\|\|x-\bar x\|\|x+\bar x\|\lesssim \varepsilon(\|x\|^2+\|\bar x\|^2)\qquad \textrm{or}\qquad\left\{\begin{aligned}
%\left|\|x\|^2-c^2\|\bar x\|^2\right|&\lesssim \varepsilon\left(\frac{\|x\|^2}{\|\bar x\|}+\frac{\|\bar x\|^2}{\|x\|}+\|x\|\right)\\
%|\langle x,\bar x\rangle|&\lesssim \varepsilon(\|x\|+\|\bar x\|),
%\end{aligned}\right\},
%\end{equation*}
%where $c>0$ is the unique solution of the equation
% $\frac{\pi}{4}=\frac{c}{1+c^2}+\arctan\left(c\right).$
%\end{thm}
\begin{proof}[Proof of Theorem~\ref{thm:pop_obj_quant_vers_main}]
We use the decomposition $g=f_P(X)$ and $f=\varphi$. Let us verify that we may apply 
Theorem~\ref{thm:pop_obj_quant}. To this end, observe that the population objective satisfies
$$
f_P(x) - f_P(\bar x) = \EE_a\left[\left\langle a, \frac{x-\bar x}{\|x - \bar x\|}\right\rangle\left\langle a,\frac{x + \bar x}{\|x + \bar x\|}\right\rangle\right]\|x - \bar x\|\| x+ \bar x\| \geq \kappa\|x - \bar x\|\| x+ \bar x\|
$$
for the numerical constant $\kappa$~\cite[Corollary 3.7]{eM}.
Moreover, clearly $\zeta$ is globally Lipschitz (being a norm), and
therefore $|\lambda_1(V)|$ and $|\lambda_d(V)|$ are bounded by a numerical constant. Thus provided
$\varepsilon:=\|Vx\|$ satisfies $\varepsilon \leq \gamma \cdot \|x\|$ for the numerical constant $\gamma$, we can be sure that
$x$ satisfies either 
\begin{equation*}
\|x\|\|x-\bar x\|\|x+\bar x\|\lesssim \varepsilon \|\bar x\|^2 \qquad \textrm{or}\qquad\left\{\begin{aligned}
|\lambda_1(V)|&\lesssim \varepsilon/\|x\|\\
|\langle x,\bar x\rangle|&\lesssim \varepsilon \|\bar x\|
\end{aligned}\right\}.
\end{equation*}
Now suppose the latter is the case, and let $C$ be a numerical
constant satisfying $|\lambda_1(V)|\leq C \varepsilon/\|x\|$ and
$|\dotp{x, \bar x}| \le C \varepsilon \|\bar x\|$. We aim to apply Corollary~\ref{cor: nearly_orthongal_small_lambda}. To do
so, we must ensure 
\[ |\lambda_1(V)| \le \frac{C \varepsilon}{\|x\|}\le \frac{1}{8} \quad \text{and} \quad \left |
\left \langle \frac{x}{\|x\|}, \frac{\bar x}{\| \bar x\|} \right \rangle \right |
\le C \varepsilon \cdot \frac{\| \bar x\|}{\|x\| \| \bar x\|} =\frac{C\varepsilon}{\|x\|} \le \frac{1}{8}.\]
Adjusting $\gamma$ if necessary, we can be sure that
$\varepsilon/\|x\|$  is below $\frac{1}{8C}$. Applying Corollary~\ref{cor: nearly_orthongal_small_lambda}, with $\frac{C\varepsilon}{\|x\|}$ in place of $\varepsilon$, we conclude
 $|\|x\|-c\|\bar x\||\lesssim \varepsilon \frac{\|\bar
     x\|}{\|x\|} $, as claimed.
% We now aim to apply Lemma~\ref{lem:weird_comput_slope2}. To do so, we must be ensure
% $C\varepsilon/\| x\|\leq 1/2$. We can achieve this by decreasing $\kappa$ if necessary, as we are only considering $\varepsilon>0$ satisfying $\varepsilon \leq \tfrac{\kappa}{\sqrt{2}}\|x\|$. Thus Lemma~\ref{lem:weird_comput_slope2}
% guarantees $|\frac{\lambda_1(X)}{-\lambda_d(X)}-c^2|\lesssim \varepsilon/\| x\|$. Next we aim to apply Lemma~\ref{lem:weird_eigenvalue_bound}. 
% To this end, observe
% $$\left|\left\langle \frac{x}{\|x\|},\frac{\bar x}{\|\bar x\|}\right\rangle\right|\lesssim \varepsilon\cdot\frac{\|x\|+\|\bar x\|}{\|x\|\|\bar x\|}=\frac{\varepsilon}{\|\bar x\|}+\frac{\varepsilon}{\|x\|}.$$
% Adjusting $\kappa$ again if necessary, we can be sure that $\varepsilon/\|x\|$  and $\varepsilon/\|\bar x\|$  are below any fixed numerical constant. Lemma~\ref{lem:weird_eigenvalue_bound}
% then guarantees
% \begin{align*}
%  \left|\frac{\lambda_1(X)}{-\lambda_d(X)}\cdot \frac{\|\bar x\|^2}{\|x\|^2}-1\right|
%   \lesssim \frac{\varepsilon}{\|\bar x\|}+\frac{\varepsilon}{\|x\|}.
% \end{align*} 
% Rearranging and using the triangle inequality, we obtain
% $$\left|\|x\|^2-c^2\|\bar x\|^2\right|\lesssim \varepsilon\left(\frac{\|x\|^2}{\|\bar x\|}+\frac{\|\bar x\|^2}{\|x\|}+\|x\|\right).$$
% The result follows.
\end{proof}

\subsection{Comments on Robustness}\label{sec:comments_robust}

%% What are we trying to do 
We have thus far assumed that the measurement vector $b = (Ax)^2$ has not been corrupted by errant noise. In this section, we record a few straightforward extensions of earlier results, which hold if the measurements $b$ are noisy.
\begin{assumption}
Let $b_1, \ldots, b_m$ be $m$ i.i.d. copies of
\begin{align*}
\hat b = (a^T x)^2 + \delta \cdot \xi,
\end{align*}
where $\delta \in \{0, 1\}$, $\xi \in \RR$, and $a\in\R^d$ are independent random variables satisfying (1) $p_{\mathrm{fail}} := P(\delta \neq  0) < 1$, (2)  $\EE\left[|\xi|\right] < \infty$, and (3)  $a \sim \mathsf{N}(0, I_d)$. 
\end{assumption}
Under corruption by $\delta \cdot \xi$, we define new population and subsampled objectives
\begin{align*}
\hat  f_P(x) &:= \mathbb{E}_{a, \xi, \delta}[|(a^Tx)^2-(a^T\bar x)^2 - \delta \cdot \xi|];\\
\hat f_{S}(x) &:=\frac{1}{m}\sum_{i=1}^m |(a_i^Tx)^2-b_i|.
\end{align*}
Then by following the outline of the proof of Lemma~\ref{lem:spec_represe}, we arrive at a similar characterization of $\hat f_P$ as a spectral function.

\begin{lem}[Spectral representation of the population objective]\label{lem:spec_represe_robust}{\hfill \\ }
For all points $x\in\R^d$, equality holds:
\begin{equation*} %\label{eqn:pop_spec2}
\hat f_P(x)=\EE_{v, \xi, \delta} \left[\Big| \langle \lambda(X), v\rangle - \delta \cdot \xi_i\Big|\right],
\end{equation*}
where $v_i\in\R$ are i.i.d. chi-squared random variables $v_i\sim\chi^2_1$. 
\end{lem}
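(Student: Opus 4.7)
The plan is to mimic the proof of Lemma~\ref{lem:spec_represe} essentially verbatim, using the fact that the corruption variables $\delta$ and $\xi$ are independent of the measurement vector $a$, so the rotational invariance of the Gaussian can still be applied to the $a$-coordinate after conditioning on $(\delta, \xi)$.

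First, I would unfold the definition to write
\[
\hat f_P(x) = \EE_{a,\xi,\delta}\bigl[\,\bigl|(a^Tx)^2 - (a^T\bar x)^2 - \delta\xi\bigr|\,\bigr] = \EE_{a,\xi,\delta}\bigl[\,\bigl|\trace(a^T X a) - \delta\xi\bigr|\,\bigr],
\]
where $X := xx^T - \bar x \bar x^T$, exactly as in the derivation preceding Lemma~\ref{lem:spec_represe}. Next, I would fix an eigenvalue decomposition $X = U\Diag(\lambda(X))U^T$ with $U \in \mathbb{O}^d$.

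The key step is a tower-property / rotational-invariance argument. Conditioning on $(\delta,\xi)$, the variable $a$ is still $\mathsf{N}(0,I_d)$, and therefore $Ua \stackrel{d}{=} a$. Consequently
\[
\EE_a\bigl[\,\bigl|\trace(a^T X a) - \delta\xi\bigr|\,\big|\,\delta,\xi\bigr] = \EE_a\bigl[\,\bigl|\trace((Ua)^T X (Ua)) - \delta\xi\bigr|\,\big|\,\delta,\xi\bigr] = \EE_u\Bigl[\,\Bigl|\,\textstyle\sum_{i=1}^d \lambda_i(X) u_i^2 - \delta\xi\,\Bigr|\,\big|\,\delta,\xi\Bigr],
\]
where $u_i$ are i.i.d.\ standard normals. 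Setting $v_i := u_i^2 \sim \chi^2_1$ and taking the outer expectation over $(\delta,\xi)$ yields the claimed identity.

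The argument has essentially no obstacle beyond what was already handled in Lemma~\ref{lem:spec_represe}; the only subtlety is justifying the interchange between the rotation and the $\delta\xi$ term, which is immediate from the independence assumption on $(a,\delta,\xi)$. No weak convexity, concentration, or spectral subdifferential calculus is needed here.
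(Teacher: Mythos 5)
Your proof is correct and follows exactly the approach the paper indicates, namely ``following the outline of the proof of Lemma~\ref{lem:spec_represe}'': rewrite $(a^Tx)^2 - (a^T\bar x)^2$ as $a^TXa$, diagonalize $X$, and apply rotational invariance of the Gaussian. The conditioning-on-$(\delta,\xi)$ step is the right way to make explicit the interchange that the paper leaves implicit, and is immediate from the assumed independence of $a$, $\delta$, and $\xi$.
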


Thus, we may write
$$
\hat f_P(x) = \hat \varphi(\lambda(X)),
$$
where $\hat \varphi$ is the convex symmetric function 
$$
\hat \varphi(z) := \EE_{v, \xi, \delta} \left[\Big| \langle z, v\rangle - \delta \cdot \xi_i\Big|\right].
$$
Moreover, provided that $\bar x$ is a minimizer of $\hat f_P$, the complete set of stationary points of $\hat f_p$ may be determined from Corollary~\ref{cor:find_stat}. We prove this now.
\begin{lem}
For all $x \in \RR^d$, the following inequality holds: 
\begin{align*}
\hat f_P(x) - \hat f_P(\pm \bar x) \geq (1-2p_{\mathrm{fail}}) f_P( x).
\end{align*}
Consequently, if $p_{\mathrm{fail}} < 1/2$, the points $\pm \bar x$ are the only minimizers of $\hat f_p$, and there exists a numerical constant $\kappa$ such that
$$
\hat f_P(x) - \hat f_P(\pm \bar x) \geq \kappa(1-2p_{\mathrm{fail}})\|x - \bar x\|\|x + \bar x\|.
$$ 
\end{lem}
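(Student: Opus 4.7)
The plan is to reduce everything to the clean population objective $f_P$ by exploiting independence of $a$, $\delta$, and $\xi$, and then invoke the sharpness result of Eldar--Mendelson already cited in the proof of the preceding lemma.

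First I would compute $\hat f_P(\pm\bar x)$ directly. Setting $x=\pm\bar x$ kills the quadratic part, leaving $\hat f_P(\pm\bar x)=\EE[|\delta\xi|]=p_{\mathrm{fail}}\cdot\EE[|\xi|]$, where independence of $\delta$ and $\xi$ gives the factorization. Next, let $u:=(a^T x)^2-(a^T\bar x)^2$ and condition on $\delta\in\{0,1\}$. Since $\delta=0$ with probability $1-p_{\mathrm{fail}}$ (in which case the noise vanishes and the integrand is just $|u|$) and $\delta=1$ with probability $p_{\mathrm{fail}}$ (in which case the integrand becomes $|u-\xi|$, with $\xi$ independent of $a$), I obtain
\begin{equation*}
\hat f_P(x)=(1-p_{\mathrm{fail}})\,f_P(x)+p_{\mathrm{fail}}\cdot\EE_{a,\xi}[|u-\xi|].
\end{equation*}

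The key observation is then the pointwise triangle inequality $|\xi|\le|u-\xi|+|u|$, which upon taking expectations yields $\EE[|u-\xi|]\ge\EE[|\xi|]-f_P(x)$. Substituting this into the decomposition above and subtracting $\hat f_P(\pm\bar x)=p_{\mathrm{fail}}\EE[|\xi|]$ gives
\begin{equation*}
\hat f_P(x)-\hat f_P(\pm\bar x)\ge(1-p_{\mathrm{fail}})\,f_P(x)+p_{\mathrm{fail}}\big(\EE[|\xi|]-f_P(x)\big)-p_{\mathrm{fail}}\EE[|\xi|]=(1-2p_{\mathrm{fail}})\,f_P(x),
\end{equation*}
which is exactly the first claim. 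The symmetric case $-\bar x$ is automatic since $f_P$ and $\hat f_P(\cdot)-\hat f_P(\pm\bar x)$ are unchanged under $\bar x\mapsto-\bar x$ in the relevant terms (or simply because $(a^Tx)^2=(a^T(-x))^2$).

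For the final ``consequently'' clause, I would observe that when $p_{\mathrm{fail}}<1/2$ the right-hand side $(1-2p_{\mathrm{fail}})f_P(x)$ is strictly positive whenever $f_P(x)>0$, i.e.\ whenever $x\notin\{\pm\bar x\}$, which identifies $\pm\bar x$ as the unique minimizers. The quantitative sharpness bound then follows by plugging in the Eldar--Mendelson lower bound $f_P(x)\ge\kappa\|x-\bar x\|\|x+\bar x\|$ (the very bound quoted from \cite[Corollary 3.7]{eM} earlier in this section). The only conceptual obstacle is picking the right triangle-inequality direction in step three; the naive bound $|u-\xi|\ge|u|-|\xi|$ produces $f_P(x)-2p_{\mathrm{fail}}\EE[|\xi|]$, which does not match the target, whereas $|u-\xi|\ge|\xi|-|u|$ absorbs the $\EE[|\xi|]$ term exactly and yields the claimed $(1-2p_{\mathrm{fail}})$ factor.
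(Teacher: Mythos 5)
Your proof is correct and follows essentially the same route as the paper's: condition on $\delta$ to obtain the decomposition $\hat f_P(x)=(1-p_{\mathrm{fail}})f_P(x)+p_{\mathrm{fail}}\EE_{a,\xi}[|u-\xi|]$, subtract $\hat f_P(\pm\bar x)=p_{\mathrm{fail}}\EE[|\xi|]$, apply the reverse triangle inequality $|u-\xi|\ge|\xi|-|u|$, and then invoke the Eldar--Mendelson sharpness bound for $f_P$. The paper simply applies the bound $|u-\xi|-|\xi|\ge-|u|$ directly inside the expectation rather than isolating $\EE[|u-\xi|]$ first, but this is the same inequality rearranged; your remark that the opposite triangle-inequality direction fails is a correct (and useful) observation.
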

\begin{proof}
By expanding the difference, we find that
\begin{align*}
\hat f_P(x) - \hat f_P(\pm \bar x) &= (1-p_{\mathrm{fail}})(f_P(x) - f_P(\bar x)) + \pfail \EE_{a,  \xi}\left[ |(a^T x)^2 - (a^T\bar x)^2 - \xi | - |\xi|\right]\\
&\geq (1-p_{\mathrm{fail}})f_P(x) - \pfail\EE_{a}\left[|(a^T x)^2 - (a^T\bar x)^2|\right]\\
&\geq (1-2p_{\mathrm{fail}})f_P(x).
\end{align*}
Only the sharpness inequality is left to prove, but this is simply a consequence of the sharpness of $f_P$, which was proved in~\cite[Corollary 3.7]{eM}.
\end{proof}

Therefore, by Corollary~\ref{cor:find_stat} we arrive at the complete characterization of the stationary points of $\hat f_P$. 
\begin{thm}\label{eq:robust_stat}
The set of stationary points of $ \hat f_P$ are precisely
$$
\{\pm x\}\cup \{0\} \cup \{ x \mid \dotp{x, \bar x} = 0, \text{and } \exists \zeta \in  \partial \hat \varphi(\lambda(X)), \max_i\{\zeta_i\} = 0\}.
$$
\end{thm}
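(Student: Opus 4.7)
The plan is to realize $\hat f_P$ as a convex spectral function of the rank-two matrix $X = xx^T - \bar x\bar x^T$ and apply Corollary~\ref{cor:find_stat} directly. First I would verify the hypotheses: the function $\hat\varphi(z) = \mathbb{E}_{v,\xi,\delta}[|\langle z,v\rangle - \delta\xi|]$ is convex (expectation of convex functions of $z$) and symmetric (since $v$ has i.i.d.\ coordinates). Combining this with Lemma~\ref{lem:spec_represe_robust} yields $\hat f_P(x) = \hat\varphi_\lambda(X)$, and Theorem~\ref{thm:char_spec_sbdiff} supplies the chain rule $\partial \hat f_P(x) = \partial \hat\varphi_\lambda(X)\,x$. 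The preceding lemma provides the last ingredient: under $p_{\mathrm{fail}} < 1/2$, the points $\pm \bar x$ are the unique minimizers of $\hat f_P$, with the sharp lower bound $\hat f_P(x) - \hat f_P(\bar x) \gtrsim (1-2p_{\mathrm{fail}})\,\|x-\bar x\|\|x+\bar x\|$.

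With these in hand, the ``moreover'' clause of Corollary~\ref{cor:find_stat} gives the trichotomy: $x$ is stationary for $\hat f_P$ if and only if either $(\mathrm{i})$ $\hat f_P(x) \le \hat f_P(\bar x)$, or $(\mathrm{ii})$ $x = 0$, or $(\mathrm{iii})$ $\langle x,\bar x\rangle = 0$ and some $V\in\partial\hat\varphi_\lambda(X)$ satisfies $Vx = 0$ and $\lambda_1(V) = 0$. Sharpness collapses $(\mathrm{i})$ to $x \in \{\pm\bar x\}$, matching the first piece of the claimed set, so what remains is to re-express $(\mathrm{iii})$ in the $\zeta$-form of the theorem.

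Assuming $\langle x,\bar x\rangle = 0$ and $x \ne 0$, the spectrum of $X$ is $\{\|x\|^2, 0,\ldots, 0, -\|\bar x\|^2\}$, so the top eigenvalue $\|x\|^2$ has multiplicity one with eigenvector $x/\|x\|$. Any $V \in \partial\hat\varphi_\lambda(X)$ then satisfies $Vx = \lambda_1(V)\,x$, so $Vx = 0$ is equivalent to $\lambda_1(V) = 0$. The forward direction of the translation is immediate from Theorem~\ref{thm:char_spec_sbdiff}: take $\zeta := \lambda(V) \in \partial\hat\varphi(\lambda(X))$, so that $\max_i \zeta_i = \lambda_1(V) = 0$.

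The reverse direction is the only step I expect to require genuine care. Given $\zeta\in\partial\hat\varphi(\lambda(X))$ with $\max_i\zeta_i = 0$, I would sort the entries of $\zeta$ nonincreasingly to obtain $\tilde\zeta$, and set $V := U\,\mathrm{Diag}(\tilde\zeta)\,U^T$ where $U$ is any diagonalizer of $X$. To apply Theorem~\ref{thm:char_spec_sbdiff} I must verify that $\tilde\zeta$ still lies in $\partial\hat\varphi(\lambda(X))$. This reduces to a standard comonotonicity observation for symmetric convex functions: testing the subgradient inequality against the coordinate transposition $y^{(ij)}$ (which fixes $\hat\varphi$ by symmetry) gives $(\zeta_i - \zeta_j)(y_i - y_j) \ge 0$, showing that $\zeta$ is already comonotone with $y := \lambda(X)$, while entries of $\zeta$ within blocks where $y$ is constant may be swapped freely by the same symmetry. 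This yields $V \in \partial\hat\varphi_\lambda(X)$ with $\lambda_1(V) = \max_i\zeta_i = 0$, completing the equivalence and hence the proof.
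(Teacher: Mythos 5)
Your proof is correct and follows the same route as the paper, which simply invokes Corollary~\ref{cor:find_stat} (after checking that $\hat\varphi$ is convex, symmetric, and that $\pm\bar x$ are the unique minimizers when $p_{\mathrm{fail}} < 1/2$). The extra work you do — translating the corollary's condition $\lambda_1(V)=0$ into the theorem's $\exists\,\zeta\in\partial\hat\varphi(\lambda(X))$ with $\max_i\zeta_i=0$ via the comonotonicity/within-block-permutation argument — is exactly the bookkeeping the paper leaves implicit, and it is carried out correctly.
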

The exact location of those stationary points orthogonal to $\bar x$ depends on the structure of the convex function $\hat \varphi$, which in turn depends the distribution of the noise $\delta \cdot \xi$. We will not attempt to characterize such $\hat \varphi$.

By the sharpness of $\hat f_P$, a quantitative version of Theorem~\ref{eq:robust_stat} immediately follows from Theorem~\ref{thm:pop_obj_quant}. When coupled together with a concentration inequality like that in Theorem~\ref{thm:em}, such a theorem would imply concentration of the subdifferential graphs of $\hat f_S$ and $\hat f_P$. We omit these straightforward details.

\end{document}